\newcommand{\items}{\begin{itemize}[leftmargin=25pt,rightmargin=15pt]
  \setlength\itemsep{2pt}}
\newcommand{\stopitems}{\end{itemize}}
\subjclass[2020]{57K41, 57R58, 32S25}
\newtheorem{theorem}{Theorem}[section] 
\newtheorem*{theorem*}{Theorem}
\newtheorem{lemma}[theorem]{Lemma}
\newtheorem{conjecture}[theorem]{Conjecture}
\newtheorem*{conjecture*}{Conjecture}
\newtheorem*{question*}{Question}
\newtheorem*{lemma*}{Lemma}
\newtheorem{proposition}[theorem]{Proposition}
\newtheorem{corollary}[theorem]{Corollary}
\newtheorem*{corollary*}{Corollary}
\theoremstyle{definition}
\newtheorem{definition}[theorem]{Definition}
\newtheorem{remark}[theorem]{Remark}
\newtheorem{example}[theorem]{Example}
\newtheorem*{example*}{Example}
\newtheorem*{remark*}{Remark}
\newtheorem*{remarks*}{Remarks}
\newtheorem*{addenda*}{Addenda}
\newtheorem*{construction*}{Construction}
\newcommand{\fs}{\mathfrak{s}}
\DeclareMathOperator{\diff}{Diff}
\DeclareMathOperator{\SWF}{SWF}
\newcommand{\coul}{Coul}
\DeclareMathOperator{\redu}{red}
\DeclareMathOperator{\Bafu}{BF}
\newcommand{\F}{\mathbb F}
\newcommand{\cals}{\mathcal S}
\newcommand{\s}{\cals}
\renewcommand{\phi}{\varphi}
\DeclareMathOperator{\MCG}{MCG}
\DeclareMathOperator{\reduced}{red}
\DeclareMathOperator{\Pic}{Pic}
\newcommand{\coker}{\operatorname{coker}}
\newcommand{\unred}[1]{ \ignorespaces}  
\title[The monodromy diffeomorphism of weighted singularities]{The monodromy diffeomorphism of weighted singularities and Seiberg--Witten theory}
\author{Hokuto Konno}
\address{Graduate School of Mathematical Sciences, the University of Tokyo, 3-8-1 Komaba, Meguro, Tokyo 153-8914, Japan \\and\\
RIKEN iTHEMS, Wako, Saitama 351-0198, Japan}
\email{konno@ms.u-tokyo.ac.jp}
\author{Jianfeng Lin}
\address{Yau Mathematical Sciences Center, Tsinghua University, Beijing, 100871, China}
\email{linjian5477@mail.tsinghua.edu.cn}
\author{Anubhav Mukherjee}
\address{Department of Mathematics, Princeton University, Princeton, 08540, USA}
\email{anubhavmaths@princeton.edu}
\author{Juan Muñoz-Echániz}
\address{Simons Center for Geometry and Physics, State University of New York, Stony Brook, 11794, USA}
\email{jmunozechaniz@scgp.stonybrook.edu}
\begin{document}
\maketitle
\setlength{\headheight}{12.0pt}

\begin{abstract}

    We prove that the monodromy diffeomorphism of a complex $2$-dimensional isolated hypersurface singularity of weighted-homogeneous type has infinite order in the smooth mapping class group of the Milnor fiber, provided the singularity is not a rational double point. 
    This is a consequence of our main result: the boundary Dehn twist diffeomorphism of an indefinite symplectic filling of the canonical contact structure on a negatively-oriented Seifert-fibered rational homology $3$-sphere has infinite order in the smooth mapping class group. Our techniques make essential use of analogues of the contact invariant in the setting of $\mathbb{Z}/p$-equivariant Seiberg--Witten--Floer homology of $3$-manifolds.

\end{abstract}

\section{Introduction}




In this article we study the smooth mapping class group $\pi_0 ( \mathrm{Diff}(M, \partial ))$ of smooth compact $4$-manifolds admitting a symplectic structure with convex boundary. Our main result (Theorem \ref{thm: main}) establishes the infinite-order non-triviality in the smooth mapping class group of the boundary Dehn twist on any indefinite symplectic filling of the canonical contact structure on a Seifert-fibered rational homology $3$-sphere. As an application, we deduce the infinite-order non-triviality of the monodromy diffeomorphism of the Milnor fibration of any weighted-homogeneous isolated hypersurface singularity, except for the rational double-point singularities (Theorem \ref{theorem:singularities}). 

Our techniques make essential use of the \textit{Seiberg--Witten--Floer stable homotopy type} introduced by Manolescu \cite{ManolescuStablehomotopytype}, and the $\mathbb{Z}/p$-\textit{equivariant Seiberg--Witten--Floer homology} groups constructed by Baraglia--Hekmati \cite{Baraglia2024equivariant}. A crucial role is played by equivariant versions of the \textit{contact invariant} in the setting of $\mathbb{Z}/p$-equivariant Seiberg--Witten--Floer homology.  

\subsection{The monodromy of Milnor fibrations of weighted hypersurfaces} \label{Intro:singularities}

Let $f : \mathbb{C}^{n+1}\rightarrow \mathbb{C}$ be a polynomial with an \textit{isolated singularity} at $0 \in \mathbb{C}^{n+1}$ and $f(0) = 0$. The vanishing locus $X = V(f)$ is an $n$-dimensional \textit{isolated hypersurface singularity} (abbreviated to IHS).
The local structure near the singular point of $f$ can be probed through the \textit{Milnor fibration} of $f$. By Milnor's celebrated Fibration Theorem \cite{milnor}, the fibers of $f$ assemble into a $C^\infty$ fiber-bundle of (real) $2n$-manifolds with boundary
\begin{align}
f^{-1}(B_{\delta}(\mathbb{C},0)\setminus 0) \cap B_{\epsilon}(\mathbb{C}^{n+1},0) \xrightarrow{f} B_{\delta}(\mathbb{C},0) \setminus 0 \label{milnor1}
\end{align}
with a canonical trivialization of the boundary fibration, for $0 < \delta \ll \epsilon \ll 1$. We refer to $\epsilon , \delta$ as Milnor radii for $f$, which, more precisely, are chosen as follows: $\epsilon$ is chosen so small that for all $0 < \epsilon^\prime \leq \epsilon$ the IHS $V(f)$ intersects transversely with $\partial B_{\epsilon^\prime } (\mathbb{C}^{n+1} , 0 )$, and $\delta$ is chosen so small that for all $t \in B_\delta (\mathbb{C} , 0 )$ the hypersurface $V(f -t )$ is non-singular and intersects $\partial B_\epsilon (\mathbb{C}^{n+1}, 0)$ transversely. Any fiber of (\ref{milnor1}) is called a \textit{Milnor fiber} and denoted by $M$ subsequently. The homotopy type of $M$ is that of a wedge of $\mu$ $n$-spheres, where $\mu$ is the Milnor number of $f$ at $0 \in \mathbb{C}^{n+1}$.

We are interested in  
the {\it monodromy} $\psi \in \diff(M,\partial)$ of the Milnor fibration (\ref{milnor1}), where $\diff(M,\partial)$ denotes the group of diffeomorphisms of $M$ that equal the identity near $\partial M$, and $\psi$ is well-defined up to isotopy fixing $\partial M$. Since the work of Brieskorn \cite{brieskorn,brieskornADE} and Milnor \cite{milnor}, this has been a central topic in the study of singularities by topological methods. However, the classical object of study has only been the induced automorphism $\psi_*$ on the middle-dimensional homology $H_{n}(M;\mathbb{Z})$ (we refer to $\psi_\ast$ as the `classical' monodromy), and considerably less is known about the monodromy diffeomorphism as an element in the smooth mapping class group $\pi_0(\diff(M,\partial))$, especially for $n=2$. This motivates us to probe the monodromy diffeomorphism using techniques from gauge theory. Specifically, the problem we address using gauge theory is to determine whether the monodromy has finite order in $\pi_0(\diff(M,\partial))$.

We will focus on the case of an IHS defined by a \textit{weighted-homogeneous} polynomial, meaning that there exist positive integers $w_1 , \ldots , w_{n+1}, d$ such that 
\[
f(t^{w_1} x_1 , \ldots , t^{w_{n+1}}x_{n+1} ) = t^{d}f(x_1 , \ldots , x_{n+1})
\]
for all $t \in \mathbb{C}$. Weighted-homogeneous polynomials provide a rich class of singularities. Typical examples are the singularities given by the Brieskorn--Pham polynomials $f = \sum_{j = 1}^{n+1} (x_j )^{p_j}$ for $p_j \in \mathbb{Z}_{\geq 2}$, whose {\it links} are the well-known Brieskorn manifolds.
Another famous class is given by the \textit{rational double-point singularities} (the {\it ADE singularities}), defined by the quotient singularities $\mathbb{C}^2/\Gamma$, where $\Gamma$ is a finite subgroup of $ SL(2,\mathbb{C})$ (it is easy to see that these are weighted-homogeneous IHS). The classical monodromy of weighted-homomogeneous IHS singularities has been extensively studied  (see e.g. \cite{brieskorn,milnor,milnor-orlik,brieskornADE,orlik-randell,steenbrink-mixed}). For the rational double-points, the monodromy in $\pi_0 ( \mathrm{Diff}(M, \partial ) )$ is known to have finite order, which follows from Brieskorn's \textit{Simultaneous Resolution} Theorem \cite{brieskornADE}.
A natural question is whether there exist other weighted-homogeneous IHS with finite-order monodromy in the smooth mapping class group.
Our main result shows that the rational double points are, in fact, the {\it only} IHS with this property in complex dimension $2$:

\begin{theorem}\label{theorem:singularities}

Let $X = V(f)$ be a weighted-homogeneous IHS of dimension $n = 2$.
Then the monodromy $\psi$ of the Milnor fibration of $f$ has finite order in the smooth mapping class group $\pi_0 (\mathrm{Diff}(M, \partial ))$ only if $X$ is a rational double-point singularity (i.e. an ADE singularity).
\end{theorem}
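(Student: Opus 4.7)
The plan is to deduce the result from the main theorem (Theorem \ref{thm: main}) by establishing the identity $\psi^d = \delta_L$ in $\pi_0(\diff(M,\partial))$, where $\delta_L$ denotes the boundary Dehn twist along the Seifert fibers of $L = \partial M$. Given such an identity, if $\psi$ has finite order $N$, then $\delta_L^N = \psi^{dN} = \id$, contradicting the infinite order of $\delta_L$ guaranteed by Theorem \ref{thm: main}.

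The first main step is to verify that Theorem \ref{thm: main} applies to the pair $(M, L)$. The weighted-homogeneity of $f$ provides an $S^1$-action $\theta \cdot (x_1,x_2,x_3) = (e^{iw_1\theta}x_1, e^{iw_2\theta}x_2, e^{iw_3\theta}x_3)$ preserving $L$, giving it a Seifert fibration; since a weighted-homogeneous $2$-dimensional IHS has a resolution with star-shaped dual graph of rational curves, $L$ is a rational homology $3$-sphere, and its canonical complex orientation matches the negatively-oriented convention of Theorem \ref{thm: main}. The Milnor fiber $M$ inherits a Stein structure from $\bc^3$, hence is a symplectic filling of the canonical contact structure on $L$. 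When $X$ is not a rational double point the geometric genus $p_g$ is positive, so $b_2^+(M) = 2p_g > 0$ by the classical formula for Gorenstein smoothings (Durfee, Steenbrink), while $b_2^-(M) > 0$ holds for any nontrivial singularity (since the minimal resolution has exceptional components); together these show $M$ is indefinite.

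The second main step is the identity $\psi^d = \delta_L$. A representative of the monodromy is given by the $S^1$-action at time $2\pi/d$, namely $\psi(x) = (e^{2\pi i w_1/d}x_1, e^{2\pi i w_2/d}x_2, e^{2\pi i w_3/d}x_3)$, which preserves $M = f^{-1}(1) \cap B_\epsilon$ since $f(\theta \cdot x) = e^{id\theta} f(x)$ and the action is by isometries. Taken as a pointwise map one has $\psi^d = \id_M$, but $\psi$ acts nontrivially on $\partial M = L$ by the time-$2\pi/d$ map of the Seifert action; to produce a representative in $\diff(M,\partial)$ one modifies $\psi$ in a boundary collar $L \times [0,\epsilon]$ by precomposing with an isotopy $\phi_{\rho(s)}^{-1}$ unwinding $\psi|_L$ to the identity (with $\rho(0)=2\pi/d$ and $\rho(\epsilon)=0$). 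Taking the $d$-th power of this modified diffeomorphism yields the identity outside the collar and a full $2\pi$-wrap of the Seifert $S^1$-action inside the collar---which is precisely the boundary Dehn twist $\delta_L$. I expect this to be the main technical obstacle: executing the collar modification in an $S^1$-equivariant way and matching orientation and normalization conventions so that the resulting full wrap agrees with the $\delta_L$ appearing in Theorem \ref{thm: main}.
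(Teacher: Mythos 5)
Your proposal correctly identifies the rational homology sphere case: verify the hypotheses of Theorem~\ref{thm: main} on the pair $(M,L)$ and use the identity $\psi^d = \tau_M$ (the paper cites \cite[Proposition 2.14]{KLMME} for exactly this). However, there is a genuine gap: your claim that the link $L$ of a weighted-homogeneous IHS of dimension $2$ is \emph{always} a rational homology $3$-sphere is false. The resolution does have a star-shaped dual graph, but the central exceptional curve can have positive genus $g$, in which case $b_1(L) = 2g > 0$. A concrete example is $f = x^3 + y^3 + z^3$: blowing up the origin once gives an exceptional smooth plane cubic (genus $1$), and $L$ is a circle bundle over a torus with $b_1(L) = 2$. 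Theorem~\ref{thm: main} is stated only for rational homology $3$-sphere links, so your argument cannot be applied in this situation.

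The paper handles this missing case via an elementary homological argument (Lemma~\ref{lem: infinite order when b1>0}): when the base orbifold of the Seifert fibration has positive genus, one picks a curve $\gamma_1 \subset Y$ lifting a nonzero class in $H_1$ of the base, and uses $b_1(M)=0$ (which does always hold for the Milnor fiber \cite{milnor}) to bound a multiple $\gamma_2$ of $\gamma_1$ by a surface $S$ in $M$. One then checks directly that $\tau_{M,*}[S] = [S] + n[F]$ in $H_2(M,\gamma_2;\mathbb{Q})$, where $F$ is the torus of fibers over $\gamma_1$, and $[F]\neq 0$, so the action on relative homology has infinite order. To complete your proof you would need to add this case (or an equivalent), noting that for a singularity link the orbifold degree is always negative \cite{neumann-raymond}, so $b_1(Y)>0$ forces the base to have positive genus. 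The remaining parts of your proposal (checking $b^+(M)=2p_g>0$ and outlining the collar-modification showing $\psi^d = \tau_M$) are sound, though note that Theorem~\ref{thm: main} only needs $b^+(M)>0$, not full indefiniteness, so the remark about $b_2^-(M)>0$ is unnecessary.
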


Namely, we prove that the monodromy has infinite order in $\pi_0 (\mathrm{Diff}(M, \partial ))$, except for the rational double-points.
It turns out that if the link of the singularity has $b_1 > 0$, then this follows from an elementary argument based on the action on homology.
Thus, the essential part of Theorem~\ref{theorem:singularities} is the case where the link is a rational homology $3$-sphere. In this case, Theorem \ref{theorem:singularities} exhibits a phenomenon which is rather special to the \textit{smooth} category in (real) \textit{dimension} $4$, as the following remarks explain:


\begin{remark}[Topological category in real dimension 4]
If the link is a rational homology $3$-sphere, the monodromy of a weighted-homogeneous IHS has finite order in the \textit{topological} mapping class group $\pi_0 ( \mathrm{Homeo}(M, \partial)  )$.  
This follows from the fact that, 
for a weighted-homogeneous IHS $X$ the action of the monodromy on the middle-degree homology $H_n(M,\mathbb{Z})$ has finite order \cite{milnor}, together with a result of Orson--Powell \cite{orson-powell}. 

\end{remark}

\begin{remark}[Higher dimension]
For an $n$-dimensional weighted-homogeneous IHS whose link is an integral homology $(2n-1)$-sphere with $n \geq 4$, the monodromy $\psi$ has finite order in $\pi_0 (  \mathrm{Diff}(M, \partial ))$ \cite[Proposition 2.22]{KLMME} (for the case $n =4$ see also \cite{krannich2024inftyoperadicfoundationsembeddingcalculus}).
\end{remark}

The rational double-points form a rather special class of weighted-homogeneous singularities. They are characterized by the vanishing of a fundamental analytic invariant of an IHS, namely the \textit{geometric genus} $p_g (X)=0$.
Under the additional hypothesis $p_g(X) = 1$, the infiniteness of the order of the monodromy in $\pi_0(\diff(M,\partial))$ was established in previous work of the authors \cite[Corollary 1.2]{KLMME} using very different techniques that could not handle the case $p_g > 1$.
We proposed an analogous infinite-order non-triviality for $p_g>1$ as \cite[Conjecture 1.3]{KLMME}, which we prove in
Theorem \ref{theorem:singularities}.


Again, the fact that rational double-points have finite order monodromy diffeomorphism follows from the existence of a Simultaneous Resolution for the semi-universal deformation of the rational double-points, after a finite base change \cite{brieskornADE}. In turn, for a IHS with $p_g > 0$ this is well-known to not exist \cite[Proposition 2.17]{KLMME}. Theorem \ref{theorem:singularities} can be regarded as saying that the Simultaneous Resolution property (after base change) doesn't hold when $p_g > 0$ in a differential-topological sense already; thus, providing a \textit{differential-topological explanation} to the non-existence of simultaneous resolutions when $p_g > 0$ under the weighted-homogeneous assumption.
Dropping the latter assumption, we propose the following

\begin{conjecture}
\label{conjecture, not only weighted homogeneous}
If $X = V(f)$ is an IHS of dimension $n =2$ and $X$ is not a rational double point singularity, then the monodromy $\psi$ of the Milnor fibration of $f$ has infinite order in the smooth mapping class group $\pi_0 (\mathrm{Diff}(M, \partial ))$.
\end{conjecture}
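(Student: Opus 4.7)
The plan is first to reduce Conjecture \ref{conjecture, not only weighted homogeneous} to the case in which the geometric genus satisfies $p_g (X) > 0$ and the link $L$ is a rational homology $3$-sphere. By Artin's characterization, a normal surface singularity with $p_g = 0$ and rational double-point link is a rational double point, so the non-rational-double-point hypothesis can be upgraded to $p_g > 0$ once we know $X$ is not ADE. If instead $b_1(L) > 0$, then the classical monodromy $\psi_\ast$ acts non-trivially on a lattice in $H_1(M;\mathbb{Z})$ by a direct argument, and this already forces infinite order in $\pi_0 (\mathrm{Diff}(M,\partial))$, by the same homological reasoning that handles the $b_1(L)>0$ case of Theorem \ref{theorem:singularities}. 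We may therefore restrict to $L$ a rational homology sphere and $M$ an indefinite Stein filling of $(L,\xi_{\mathrm{can}})$.

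The core of the plan is to compare $\psi$ with the boundary Dehn twist $\delta_\partial$ on $M$. For a general (non weighted-homogeneous) IHS the Milnor monodromy is \emph{not} a root of $\delta_\partial$ in $\pi_0 (\mathrm{Diff}(M,\partial))$, so the main theorem of the paper cannot be quoted directly. Instead, one observes that $\psi$ is the monodromy of an open book for $(L,\xi_{\mathrm{can}})$, and that the mapping torus $T_{\psi^k} = M \cup_{\psi^k} M \times [0,1]/\!\sim$ embeds smoothly into any good resolution $\widetilde X \to X$ as a neighborhood of the exceptional divisor; when $p_g > 0$, these embeddings can be arranged so that the ambient closed $4$-manifold has $b^+ > 0$. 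A hypothetical equality $\psi^k = \mathrm{id}$ in $\pi_0 (\mathrm{Diff}(M,\partial))$ would then produce a self-diffeomorphism (or a one-parameter family) of this closed $4$-manifold, whose obstruction one would detect by an appropriate family/equivariant Seiberg--Witten invariant.

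The gauge-theoretic input the plan requires is a contact-type invariant for an open book whose monodromy is a general $\psi$, extending the $\mathbb{Z}/p$-equivariant contact invariants used in the paper. The strategy is to promote the $\mathbb{Z}/p$-equivariant contact class in Baraglia--Hekmati Floer homology to an invariant of the \emph{family} parametrized by the mapping torus of $\psi$ itself, and then to prove that its interaction with the Bauer--Furuta invariant of the embedding $T_{\psi^k} \hookrightarrow \widetilde X$ is sensitive to $p_g(X)$. The key algebraic identity one would need is a product formula relating this family contact invariant under the replacement $\psi \leadsto \psi^k$ to $k \cdot p_g(X)$ (or a similar expression), mirroring the way the fractional weight of the Seifert action produces the obstruction in Theorem \ref{thm: main}.

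The main obstacle is precisely the absence of the $S^1$-symmetry that weighted-homogeneity provides: the paper's techniques pass through $\mathbb{Z}/p$-equivariant Seiberg--Witten--Floer stable homotopy types for the natural $S^1$-action on the Seifert-fibered link, and this action also controls the boundary Dehn twist on the filling. For a general IHS no such action exists on $L$, and one must instead work with a parametrized (rather than equivariant) analogue of both the Floer contact invariant and the Bauer--Furuta invariant. Constructing such a family-parametrized contact invariant with sufficient vanishing/nonvanishing control, and proving the product formula under iteration of $\psi$, is the hard technical step; this is where I would expect the proof of Conjecture \ref{conjecture, not only weighted homogeneous} to stand or fall.
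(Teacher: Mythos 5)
The statement you are trying to prove is left open in the paper: it appears as Conjecture~\ref{conjecture, not only weighted homogeneous}, with no proof attached. The paper proves only the weighted-homogeneous case (Theorem~\ref{theorem:singularities}) together with its extension to $\mu$-constant deformations of weighted-homogeneous germs in Section~2; general two-dimensional IHS lie outside what is established. So there is no ``paper's own proof'' to compare against.

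On its own terms, your sketch correctly identifies why Theorem~\ref{theorem:singularities} does not extend: without weighted homogeneity there is no $\mathbb{C}^\ast$-action, hence no Seifert $\mathbb{Z}/p$-symmetry on the link, and no a priori reason for a power of $\psi$ to equal the boundary Dehn twist, which are exactly the ingredients that the equivariant Floer machinery in the proof of Theorem~\ref{thm: main} leans on. But the workaround you propose has a concrete problem: the object you call $T_{\psi^k}$---the mapping torus of $\psi^k$ acting on the $4$-manifold $M$---is a $5$-manifold and cannot embed in a tubular neighborhood of the exceptional divisor of a resolution $\widetilde{X}$, which is $4$-dimensional, so the proposed ``embedding into $\widetilde{X}$'' does not make sense as stated; it is also unclear what closed $4$-manifold you intend to produce or how to arrange $b^+>0$ for it. Finally, the essential gauge-theoretic step you describe---a family-parametrized contact invariant for the mapping torus with a product formula under iteration of $\psi$---is, as you yourself acknowledge, unconstructed. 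Your sketch diagnoses the obstacles accurately but does not supply tools to overcome them, and in its present form it does not constitute a proof of the conjecture.
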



Theorem \ref{theorem:singularities} has remarkable implications regarding the \textit{monodromy group} of a $n = 2$-dimensional weighted-homogeneous IHS $X = V(f)$, which we now explain (see \S \ref{section:singularities} for further details). One can `morsify' $f$ by adding in linear terms, i.e. consider $\widetilde{f} = f + \sum_{i =1}^3 a_i x_i$ for a generic choice of coefficients $a_i \in \mathbb{C}$. The subgroup $\Theta (X) \subset \pi_0 ( \mathrm{Diff}(M, \partial) )$ generated by the reflections on the vanishing $2$-spheres of the Morse function $\widetilde{f}$, for some choice of vanishing paths,
does not depend on the choices made. We refer to $\Theta(X)$ as the `geometric' monodromy group of $X$, whereas the `classical' monodromy group $\Theta_h (X)$ is defined as the image of $\Theta(X)$ under the canonical map $h:  \pi_0 ( \mathrm{Diff}(M, \partial ) ) \rightarrow \mathrm{Aut}(H_2 (M, \mathbb{Z} ) )$ with target the automorphism group of the intersection pairing on the middle-degree homology. When $X$ is a rational double-point then the comparison map $\Theta (X ) \rightarrow \Theta_h (X) $ is an isomorphism (and both groups are isomorphic to a suitable Weyl group), which follows from Brieskorn's Theorem. In contrast, Theorem \ref{theorem:singularities} implies the following

\begin{corollary}\label{cor:monodromygroup}
If $X = V(f)$ is a weighted-homogeneous IHS of dimension $n = 2$ whose link is a rational homology 3-sphere and $X$ is not a rational double-point singularity, then the kernel of the comparison map $\Theta (X ) \rightarrow \Theta_h (X)$ contains a $\mathbb{Z}$ subgroup. In particular, the comparison map is not an isomorphism.
\end{corollary}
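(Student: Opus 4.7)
The plan is to deduce the corollary almost immediately from Theorem~\ref{theorem:singularities}, combined with a classical fact of Milnor that for a weighted-homogeneous IHS the action of the monodromy on middle-degree homology has finite order. The gauge-theoretic content has already been packed into Theorem~\ref{theorem:singularities}; the remainder is bookkeeping.

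First I would verify that the monodromy diffeomorphism $\psi$ of the Milnor fibration of $f$ itself lies in the geometric monodromy subgroup $\Theta(X) \subset \pi_0(\mathrm{Diff}(M,\partial))$. This is a standard observation in singularity theory: after Morsifying $f$ via $\widetilde f = f + \sum_i a_i x_i$ for a generic small choice of coefficients, the Milnor fibers of $f$ and $\widetilde f$ can be canonically identified, and the monodromy over a large loop in the base of the Lefschetz fibration $\widetilde f$ is identified with $\psi$. Choosing a distinguished system of vanishing paths from a regular base point to the critical values of $\widetilde f$, one expresses this monodromy as a product of the Dehn twists (reflections) on the vanishing $2$-spheres. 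Since $\Theta(X)$ is, by definition, the subgroup generated by those reflections, this shows $\psi \in \Theta(X)$.

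Next, under the hypotheses of the corollary ($X$ is weighted-homogeneous, its link is a rational homology 3-sphere, and $X$ is not a rational double-point), Theorem~\ref{theorem:singularities} asserts that $\psi$ has infinite order in $\pi_0(\mathrm{Diff}(M,\partial))$, and hence also in $\Theta(X)$. On the other hand, the classical monodromy $h(\psi) = \psi_\ast \in \mathrm{Aut}(H_2(M,\mathbb{Z}))$ has finite order: this is a result of Milnor for weighted-homogeneous singularities (indeed the eigenvalues of $\psi_\ast$ are roots of unity determined by the weights). Let $N \geq 1$ be the order of $\psi_\ast$. Then $\psi^{N}$ maps to the identity in $\Theta_h(X)$, so $\psi^{N}$ lies in the kernel of the comparison map $\Theta(X) \to \Theta_h(X)$. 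Since $\psi$ has infinite order, so does $\psi^{N}$, and the cyclic subgroup $\langle \psi^{N} \rangle \cong \mathbb{Z}$ embeds in the kernel. In particular, the comparison map is not injective, hence not an isomorphism.

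There is no serious obstacle here beyond the invocation of the two inputs; the only point that deserves care is the identification $\psi \in \Theta(X)$, for which a reference to the classical literature on Morsifications and distinguished bases of vanishing cycles (e.g.\ Arnold--Gusein-Zade--Varchenko, or the corresponding references already used earlier in the paper) suffices. All the hard work is contained in Theorem~\ref{theorem:singularities}.
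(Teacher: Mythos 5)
Your proof is correct, and it matches the argument the paper has in mind: the authors state Corollary~\ref{cor:monodromygroup} as an immediate consequence of Theorem~\ref{theorem:singularities} without giving a separate proof, and the three ingredients you use (that $\psi$ lies in $\Theta(X)$, that $\psi$ has infinite order by Theorem~\ref{theorem:singularities}, and that $\psi_\ast$ has finite order by Milnor) are precisely what that deduction requires. One small streamlining is possible for the step $\psi\in\Theta(X)$: since the constant polynomial $1$ can be taken as one of the basis elements $g_i$ of the Milnor ring~(\ref{milnorring}), the restriction of the semi-universal family to the corresponding coordinate axis in $S$ is exactly the Milnor fibration of $f$, so a loop around $0$ in that axis maps under $\rho$ directly to $\psi$; this avoids invoking the Morsification/Picard--Lefschetz identification, though the route you chose is standard, correct, and well aligned with the reflection description of $\Theta(X)$ given in the introduction.
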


Theorem \ref{thm: main} can be generalized to equivariant smoothings of weighted-homogeneous isolated surface singularities which are not necessarily hypersurfaces, as well as to IHS's obtained by $\mu$-constant deformations of weighted-homogeneous ones. 
We refer to \S \ref{section:singularities} for these generalization and further discussions.

\subsection{Boundary Dehn twists on indefinite symplectic fillings}\label{Intro:symplectic}
Theorem \ref{theorem:singularities} follows from a more general result about boundary Dehn twists on indefinite symplectic fillings, which we now discuss. 

Let $Y$ be a \textit{Seifert-fibered} rational homology $3$-sphere. This means that $Y$ admits the structure of an orbifold principal $S^1$-bundle $Y \rightarrow C$  over an orbifold closed surface $C$ of genus zero. We orient $Y$ so that the $S^1$-bundle has negative orbifold degree. We refer to this as the \textit{negative orientation}. This orientation on $Y$ is the one arising from any isolated surface singularity with link $Y$. Let $M$ be a compact oriented smooth $4$-manifold with boundary $\partial M = Y$, and consider the associated \textit{boundary Dehn twist} diffeomorphism $\tau_M \in \MCG (M)$ which is described as follows. 
Fix a collar neighbourhood $(0 , 1] \times Y$ of the boundary and a smooth function $\beta : \mathbb{R} \rightarrow \mathbb{R}$ such that $\beta (t) \equiv 0$ near $t = 0$ and $\beta (t) = 2 \pi$ near $t = 1$, and define $\tau_M$ on the collar $(0,1]\times Y$ as
\[
\tau_M (t,y ) = (t , e^{i \beta (t)} \cdot y )
\]
and $\tau_M = \mathrm{Id}$ outside of the collar. This and closely-related diffeomorphism have recently been studied by several authors \cite{KM-dehn,LinK3K3,konno-mallick-taniguchi,KLMME,KangParkTaniguchi,qiu2024dehntwistconnectedsum, miyazawa}. 

We recall that a negatively-oriented Seifert-fibered $3$-manifold $Y$ carries a \textit{canonical contact structure} $\xi_{can}$, which is the unique (up to isotopies) $S^1$-invariant contact structure that is transverse to the fibers of $Y \rightarrow C$. Our main result is the following

\begin{theorem}\label{thm: main}
Let $(Y,\xi )$ be a negatively-oriented Seifert-fibered rational homology 3-sphere equipped with an $S^1$-invariant contact structure $\xi$, and let $(M,\omega )$ be a compact symplectic filling of $(Y,\xi)$ with $b^{+}(M)>0$. Then the boundary Dehn twist $\tau_{M}$ has infinite order in $\pi_0 ( \mathrm{Diff}(M, \partial ) )$.
\end{theorem}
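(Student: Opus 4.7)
The plan is to construct, for each prime $p$, a $\mathbb{Z}/p$-equivariant Seiberg--Witten--Floer obstruction that detects the boundary Dehn twist. The crucial geometric observation is that $\tau_M$ is supported in a collar of $\partial M = Y$, where it rotates along the Seifert $S^1$-action, and by hypothesis this action preserves $\xi$. Consequently, the mapping torus of $\tau_M^p$ over $S^1$ is the $p$-fold cyclic cover of the mapping torus of $\tau_M$, and hence carries a natural $\mathbb{Z}/p$-deck symmetry compatible on the boundary with the Seifert action preserving $\xi$. This compatibility brings the problem into the framework of Baraglia--Hekmati's $\mathbb{Z}/p$-equivariant Seiberg--Witten--Floer theory.

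The first step is to define a $\mathbb{Z}/p$-equivariant contact invariant $\psi^{\mathbb{Z}/p}(\xi)$ in the Baraglia--Hekmati equivariant Seiberg--Witten--Floer cohomology of $(Y, \mathfrak{s}_\xi)$, refining the classical contact invariants of Kronheimer--Mrowka and of Iida--Taniguchi. The construction exploits that the canonical approximate solution on $\mathbb{R}_+ \times Y$ built from a contact form for $\xi$ is $S^1$-equivariant for the Seifert action; this produces an $S^1$-equivariant Bauer--Furuta-type map whose restriction to $\mathbb{Z}/p$ defines $\psi^{\mathbb{Z}/p}(\xi)$. Its non-triviality should follow from Mrowka--Ozsv\'ath--Yu's explicit description of Seiberg--Witten solutions on Seifert-fibered rational homology $3$-spheres, from which one reads off the $\mathbb{Z}/p$-equivariant chain-level contributions.

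The second step is to construct, for the $p$-fold cyclic cover of the mapping torus of $\tau_M$, a $\mathbb{Z}/p$-equivariant relative families Bauer--Furuta invariant with values in equivariant stable homotopy maps into the Baraglia--Hekmati spectrum of $Y$. Pairing this invariant with $\psi^{\mathbb{Z}/p}(\xi)$ yields a numerical obstruction. The symplectic filling hypothesis with $b^{+}(M) > 0$ guarantees non-vanishing through a Taubes-type mechanism: in the non-equivariant limit the pairing recovers the ordinary contact invariant paired with the relative Bauer--Furuta invariant of $M$, which is non-zero for any $b^{+} > 0$ symplectic filling, and the equivariant enhancement inherits non-triviality via an equivariant localization argument, making essential use of the assumption that $\xi$ is $S^1$-invariant.

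The main obstacle is upgrading non-vanishing for each fixed $n$ into genuine infinite order in $\pi_0 \mathrm{Diff}(M, \partial)$. A fixed $p$ can only distinguish $\tau_M^n$ from the identity for certain $n$, so one must choose $p$ adapted to $n$: for each $n$ take a prime $p > n$ and verify non-vanishing of the corresponding $\mathbb{Z}/p$-equivariant pairing. This in turn requires an additivity property of the equivariant families invariant --- essentially showing that it scales by $n$ in a suitable $\mathbb{Z}/p$-equivariant sense so that non-vanishing reduces to $p \nmid n$ --- and carrying out this bookkeeping uniformly across the primes $p$ while maintaining the non-vanishing computation on Seifert-fibered spaces is the technically most delicate step.
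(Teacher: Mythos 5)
Your proposal shares the correct high-level ingredients (a $\mathbb{Z}/p$-equivariant contact invariant, an equivariant families Bauer--Furuta map, Taubes non-vanishing, and the freedom to choose $p$), but the geometric setup you build on is wrong, and the route you sketch is in fact the one the paper explicitly abandons.

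The first problem is the claimed compatibility between the $\mathbb{Z}/p$-deck symmetry of the $p$-fold cyclic cover of the mapping torus and the Seifert action. Since $\tau_M$ is the identity on $\partial M = Y$, the boundary of the mapping torus $M_{\tau_M}$ is $Y \times S^1$ with the trivial monodromy, and the deck transformation of the $p$-fold cover restricts on the boundary to $(y,t)\mapsto (y, t+1/p)$: it is pure rotation in the $S^1$ parameter direction and does not touch $Y$ at all. There is no natural identification of this deck action with the Seifert $\mathbb{Z}/p$-action on $Y$, so your ``hence carries a natural $\mathbb{Z}/p$-deck symmetry compatible on the boundary with the Seifert action'' is not correct, and the entire second step (an equivariant families invariant for that cyclic cover, paired with the equivariant contact class) has no foundation. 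Relatedly, you invoke Mrowka--Ozsv\'ath--Yu to compute chain-level contributions; that is the tool used in the authors' earlier paper (\cite{KLMME}), which the introduction to the present paper says only handles $p_g = 1$, precisely because it relies on mapping-torus families over $S^1$.

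The second, deeper problem is that a family over $S^1$ cannot support the obstruction mechanism the paper actually uses. The paper argues by contradiction: if $\tau_M^m = 1$ in $\pi_0 \diff(M,\partial)$, then by Kang--Park--Taniguchi one gets a smooth fiber bundle $M \hookrightarrow \widetilde{M}_\infty \to B\mathbb{Z}/p$ (for $p$ coprime to $m$, large) extending the Borel construction of the Seifert $G$-action on $Y$ over the boundary. Over such a family, the coefficient ring is $H^*(BG;\F) = \F[R,S]/(R^2)$, which has the degree-two polynomial generator $S$. The two sides of the contradiction are: (a) $\psi_{G,\reduced}(Y,\xi)$ is $S$-torsion --- this is Proposition \ref{pro: contact element S-nilpotent}, proved via the localization theorem on the finite $G$-CW Conley index together with the fact that $Y/G$ is an $L$-space for $p \gg 0$; and (b) the family cobordism map, being a $\F[U,R,S]/(R^2)$-module map whose leading coefficient pairs to $\pm 1$ against the fundamental class of $\mathrm{Pic}(M)$ (Taubes/Kronheimer--Mrowka non-vanishing), forces a non-$S$-torsion image by Leray--Hirsch over a high skeleton of $BG$. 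Nothing resembling this survives over a base $S^1$, where $H^*(S^1;\F) = \F[R]/(R^2)$ is nilpotent and there is no $S$ to localize or to generate torsion. Your proposed fix (``additivity / scales by $n$ so that non-vanishing reduces to $p \nmid n$'') is neither present in the paper nor a substitute: the coprimality of $p$ and $m$ enters only through Kang--Park--Taniguchi's construction of the family over $BG$, not through any scaling property of the invariant.
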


In the situation of Theorem \ref{theorem:singularities}, the link is a negatively-oriented Seifert-fibered rational homology $3$-sphere and the Milnor fiber $M$ provides a symplectic filling of $(Y, \xi_{can} )$ with $b^{+}(M) = 2 p_g (X) > 0$ \cite{durfee}. Theorem \ref{theorem:singularities} now follows from Theorem \ref{thm: main} because by \cite[Proposition 2.14]{KLMME} there exists a non-trivial power of the monodromy $\psi$ which agrees with the boundary Dehn twist $\tau_M$.
Theorem \ref{thm: main} was previously established by the authors \cite{KLMME} under the additional assumptions that $\mathfrak{s}_{\xi} = \mathfrak{s}_{\xi_{can}}$, $\mathfrak{s}_{\xi}$ is self-conjugate and that the reduced monopole Floer homology $HM^{\reduced}_\ast (Y, \mathfrak{s}_{\xi}; \mathbb{Z} )$ is a rank one abelian group. 

By the Theorem of Orson--Powell \cite{orson-powell}, the boundary Dehn twist $\tau_M$ is trivial in the topological mapping class group $\pi_0 ( \mathrm{Homeo}(M, \partial ) )$ when $M$ is simply-connected, and hence Theorem \ref{thm: main} provides examples of infinite order exotic Dehn twist diffeomorphisms.

Finally, about the assumptions in Theorem~\ref{thm: main} we note that, for every $Y$ as in the Theorem, it is easy to construct examples of (i) compact simply-connected symplectic fillings $(M,\omega)$ of $(Y, \xi_{can})$ with $b^{+}(M)=0$ and (ii) compact simply-connected fillings $M$ with $b^{+}(M) > 0$ but not supporting symplectic structures with convex boundary, such that $\tau_M$ is trivial in $\pi_0 ( \mathrm{Diff}(M, \partial ))$. We expect that the $S^1$-invariant hypothesis on $\xi$ can be dropped, and propose the following


\begin{conjecture}
Let $Y$ be a negatively-oriented Seifert-fibered 3-manifold, and let $(M,\omega )$ be a compact symplectic filling of $(Y,\xi)$ with $b^{+}(M)>0$ (where $\xi$ is a contact structure on $Y$). Then the boundary Dehn twist $\tau_{M}$ has infinite order in $\pi_0 ( \mathrm{Diff}(M, \partial ) )$.
\end{conjecture}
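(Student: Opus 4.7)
The natural plan is to reduce the conjecture to Theorem~\ref{thm: main} via either a cobordism argument or a direct extension of the equivariant contact invariant to non-$S^1$-invariant contact structures. Since $(M, \omega)$ is a symplectic filling, $\xi$ is tight by Gromov--Eliashberg. The case $b_1(Y) > 0$ presumably requires a separate, more classical argument via the induced action on the algebraic topology of $M$, in the spirit of the elementary argument for weighted-homogeneous singularities with $b_1 > 0$ alluded to in \S\ref{Intro:singularities}. I therefore focus on the main case, where $Y$ is a rational homology $3$-sphere and $\xi$ is tight but not necessarily $S^1$-invariant.

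One natural approach is to construct a strong symplectic cobordism $(W, \omega_W)$ from $(Y, \xi)$ (as concave end) to $(Y, \xi_0)$ (as convex end) for some $S^1$-invariant $\xi_0$, and glue it on top of $M$ to obtain a symplectic filling $M^\prime = M \cup_Y W$ of $(Y, \xi_0)$ with $b^+(M^\prime) \geq b^+(M) > 0$. Theorem~\ref{thm: main} then gives that $\tau_{M^\prime}$ has infinite order in $\pi_0(\mathrm{Diff}(M^\prime, \partial))$, and the goal is to show that the image of $[\tau_M]$ under the extension-by-identity homomorphism $\pi_0(\mathrm{Diff}(M, \partial)) \to \pi_0(\mathrm{Diff}(M^\prime, \partial))$ also has infinite order. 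This image is the Dehn twist along an internal copy of $Y$ sitting in $M^\prime$, which agrees with $\tau_{M^\prime}$ only when $W$ is a product cobordism; in general the two differ by a correction supported in $W$ that one must show has finite order. Moreover, the classification of tight contact structures on Seifert-fibered rational homology $3$-spheres by Ghiggini, Lisca, Wu and others expresses each $\xi$ via Legendrian (contact $(-1)$-)surgeries on an $S^1$-invariant structure, producing Stein cobordisms with $\xi$ on the \emph{convex} end, so the direction of $W$ is the opposite of what is wanted, and obtaining $W$ as above would require a weak-symplectic refinement or an entirely different construction.

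The more ambitious alternative is to extend the construction of the $\mathbb{Z}/p$-equivariant contact invariant of Baraglia--Hekmati directly to contact structures that are only smoothly (not contact-) isotopic to $S^1$-invariant ones. Fixing a $\mathbb{Z}/p$-subgroup of the Seifert $S^1$-action on $Y$, one would transport the equivariant contact invariant of an $S^1$-invariant $\xi_1$ to $\xi$ along a path of $2$-plane fields, via a continuation map in $\mathbb{Z}/p$-equivariant Seiberg--Witten--Floer homology, to define $c(\xi)$. The principal obstacle, and presumably the main reason the statement remains a conjecture, is to prove that $c(\xi)$ is non-vanishing whenever $(Y, \xi)$ admits a symplectic filling with $b^+ > 0$: even in the $S^1$-invariant case of Theorem~\ref{thm: main} the analogous non-vanishing rests on delicate properties of equivariant Bauer--Furuta-type invariants and a Kronheimer--Mrowka-style gluing formula for the contact invariant, and dropping equivariance of $\xi$ itself removes a key structural input in both the definition of $c(\xi)$ and the non-vanishing proof.
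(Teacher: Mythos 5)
The statement you were asked to address is labeled a \emph{Conjecture} in the paper, and the paper contains no proof of it. Theorem~\ref{thm: main} is the result the paper actually proves; it differs from this conjecture in two respects: it requires $Y$ to be a rational homology $3$-sphere, and it requires the contact structure $\xi$ to be $S^1$-invariant. There is therefore no proof in the paper against which to compare your attempt, and you correctly recognize that your writeup is, in effect, a survey of plausible strategies and their obstacles rather than a proof.

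The obstacles you name are real. The symplectic cobordism strategy runs into the direction problem you describe (Legendrian surgeries produce Stein cobordisms with $\xi$ on the convex end), and even granting a strong cobordism $W$ from $(Y,\xi)$ concave to $(Y,\xi_0)$ convex, knowing $\tau_{M'}$ has infinite order for $M' = M\cup_Y W$ does not directly transfer to $\tau_M$: the discrepancy $\tau_{M'}\circ(\tau^{int})^{-1}$ is supported in $W$, and bounding its order essentially requires the Seifert $S^1$-action (or its $\mathbb{Z}/p$ restrictions) to extend over $W$, which it generally does not. The alternative of extending the $\mathbb{Z}/p$-equivariant contact invariant to non-invariant $\xi$ also faces the difficulty you point to: the equivariant finite-dimensional approximation on the symplectization needs a $G$-invariant contact form, and in general $\xi$ need not even be smoothly isotopic (as a $2$-plane field) to an $S^1$-invariant one, so a continuation-map definition is not available in general. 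One minor correction: for the $b_1(Y)>0$ case, the paper's elementary argument (Lemma~\ref{lem: infinite order when b1>0}) additionally assumes $b_1(M)=0$, so even that case is not immediate under the conjecture's hypotheses. In short, there is no gap to flag relative to the paper, because the paper offers no proof; your assessment of why the statement remains open is essentially sound.
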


\subsection{Outline and Comments}

We outline the key steps in the proof of Theorem \ref{thm: main}, assuming $b_1 (M) =0$ for simplicity. For a negatively-oriented Seifert-fibered rational homology $3$-sphere $Y$, we consider the $G = \mathbb{Z}/p$-action on $Y$ obtained by restriction of the Seifert $S^1$-action to $G \subset S^1$, for a prime number $p$. For the spin-c structure $\mathfrak{s}_Y = \mathfrak{s}_\xi$ and $\mathbb{F} = \mathbb{Z}/p$, we consider the $G$-equivariant Seiberg--Witten--Floer cohomology groups with $\mathbb{F}$-coefficients defined by Baraglia--Hekmati \cite{baraglia2024brieskorn, Baraglia2024equivariant} 
\[ \widecheck{HM}_{G}^\ast (Y, \mathfrak{s}_Y ; \mathbb{F}),\ \widehat{HM}_{G}^\ast (Y, \mathfrak{s}_Y  ; \mathbb{F}) ,\ \overline{HM}_{G}^\ast (Y, \mathfrak{s}_Y ; \mathbb{F})
\]
which are obtained, roughly speaking, as the Borel, coBorel and Tate cohomology, respectively, of the homotopy quotient $SWF(Y, \mathfrak{s}_Y )/\!\!/G := (SWF(Y,\mathfrak{s}_Y) \wedge EG_+)/G$ of the $S^1$-equivariant spectrum $SWF(Y, \mathfrak{s}_Y )$ defined by Manolescu \cite{ManolescuStablehomotopytype}. Thus these are modules over the graded $\mathbb{F}$-algebra $H_{S^1 \times G}^\ast (\ast ; \mathbb{F} ) =  \mathbb{F}[U, R , S]/R^2 $ where $|U| = |S| =2$ and $|R| =1$. Each of these groups can be regarded as a suitable `family Floer homology' of the family of $3$-manifolds $Y \rightarrow (EG\times_{G} Y) \rightarrow BG$. 

Crucially, our proof leverages obstructions coming from a $G$-equivariant analogue of the contact invariant \cite{monolens}. This is an element $c_G (Y, \xi ) \in \widehat{HM}_{G}^\ast (Y, \mathfrak{s}_Y , \mathbb{F})$ canonically attached to a $G$-invariant contact structure $\xi$ on $Y$, and well-defined modulo sign ambiguity. This can be regarded as an invariant associated to the family of contact $3$-manifolds $(Y, \xi ) \rightarrow (EG\times_{G} Y) \rightarrow BG$ which is valued in the family Floer homology. The invariant $c_G (Y,\xi )$ is obtained as the map induced on cohomology by a suitable finite-dimensional approximation of the Seiberg--Witten equation on symplectisations, in the spirit of the homotopy-refinement of the contact invariant studied by Iida and Taniguchi \cite{Iida,IidaTaniguchi} and the families contact invariant studied by the fourth author and Fernández \cite{juan1,juan2}.

Using the fact that $Y/G$ is an $L$-space when $p\gg1$ \cite{baraglia2024brieskorn}, we apply the localization theorem and prove that the image of $c_G (Y, \xi )$ under the natural map $j : \widehat{HM}_{G}^\ast (Y, \mathfrak{s}_Y ) \rightarrow \widecheck{HM}_{G}^\ast (Y, \mathfrak{s}_Y ) $ is $S$-torsion, i.e. $S^N \cdot j ( c_G (Y, \xi ) ) = 0$ for some $N > 0$. On the other hand, assuming for a contradiction that the $m^{th}$ power ($m\neq 0$) of the boundary Dehn twist $\tau_M$ on $M$ is trivial in $\pi_0 ( \mathrm{Diff}(M , \partial  ))$, then for $p \gg 1$ coprime with $m$ there exists a family of $4$-manifolds $M \rightarrow \widetilde{M} \rightarrow BG$ that bounds $Y \rightarrow (EG\times_{G} Y) \rightarrow BG$, by a key result of Kang--Park--Taniguchi \cite{KangParkTaniguchi}. We associate to this a `family cobordism map' in the canonical spin-c structure $\mathfrak{s}_\omega$, which takes the form of a map of $\mathbb{F}[U,R,S]/R^2$-modules 
\[ HM_{G , red}^\ast (Y, \mathfrak{s}_Y ) := \operatorname{Im}(j) \rightarrow \mathbb{F}[U,U^{-1},R,S]/(U,R^2).\] Using the naturality property of the $G$-equivariant contact invariant and Taubes' non-vanishing Theorem for Seiberg--Witten invariants of symplectic manifolds, we show that $j (c_G (Y, \xi ))$ maps to a non-trivial element of $\mathbb{F}[U,U^{-1},R,S]/(U,R^2)$. But the latter module is $S$-torsion-free, which contradicts the fact that $c_G (Y , \xi )$ is $S$-torsion. 

This finishes the proof when $b_{1}(M)=0$. The case $b_{1}(M)>0$ is similar, except that we need to introduce a new construction of a relative Bauer--Furuta invariant for 4-manifold families in the absence of family spin-c structure. This construction may be of independent interest. 


We note that the techniques used in our previous paper \cite{KLMME} are essentially different from the current paper, although they both use family Seiberg--Witten theory. In \cite{KLMME}, we considered the mapping torus of the Dehn twist as a family over $S^1$ and computed its family Seiberg--Witten invariant using the Mrowka--Ozsv\'ath--Yu monopole-divisor correspondence \cite{MOY} and a gluing theorem by the second author \cite{JLIN2022}. That approach only works when $p_{g}=1$ but  has the advantage of being readily generalized to study Dehn twists on closed manifolds. Such generalizations lead to several interesting applications (e.g exotic $\mathbb{R}^4$'s with non-trivial compactly-supported mapping class group). In the present article, we can handle the case $p_{g}>1$ by studying families parametrized by $BG$. However, it remains unclear how to apply this argument to study Dehn twists on closed 4-manifolds.

\subsection*{Acknowledgements:} We are grateful to Javier Fernández de Bobadilla and Chenglong Yu for answering our questions, and David Baraglia, John Etnyre, Nobuo Iida, Jin Miyazawa, and Masaki Taniguchi for helpful comments on a draft of the paper. 
HK was partially supported by JSPS KAKENHI Grant Number 21K13785.
JL was partially supported by NSFC Grant 12271281. 
AM was partially supported by NSF std grant DMS 2405270.


\section{The monodromy groups of weighted singularities}\label{section:singularities}

\subsection{Monodromy Groups of IHS}

We begin by describing in further detail the various notions of monodromy groups discussed in the introduction (for further details we refer to \cite{dimca,arnold}).


Let $X = V(f) \subset \mathbb{C}^{n+1}$ be an $n$-dimensional weighted-homogeneous IHS, and let $\mu \in \mathbb{Z}_{\geq 0}$ denote the Milnor number of $f$ at its isolated singularity $0 \in \mathbb{C}^{n+1}$. Let $\mathcal{X} \xrightarrow{\pi} S$ be its \textit{semi-universal deformation}. Namely, the base $S$ is identified as the vector space underlying the $\mathbb{C}$-algebra
\begin{align}
S  = \frac{\mathbb{C}[x_1, \ldots , x_{n+1}]}{( \partial f / \partial x_1 ,   \ldots , \partial f / \partial x_{n+1} )} \cong \mathbb{C}^\mu  .\label{milnorring}
\end{align}
If we fix a collection $g_1 , \ldots , g_\mu$ of weighted-homogeneous polynomials giving a basis of (\ref{milnorring}), then the total space can be described as the subvariety $\mathcal{X} \subset \mathbb{C}^{n+1}\times \mathbb{C}^\mu$ given by the equations $f(x) + \sum_{i = 1}^{\mu} t_i g_i (x) = 0$ (where $x,t$ denote the standard coordinates on $\mathbb{C}^{n+1}$ and $\mathbb{C}^\mu$), and $\pi$ is the projection to $t$.

From now on, we will work with a suitable representative of the map germ $(\mathcal{X},0 ) \xrightarrow{\pi} (S, 0 )$ (namely, of the form $\pi^{-1}(\mathrm{Int} B_{\delta}(\mathbb{C}^\mu , 0 ) ) \cap \mathrm{Int} B_{\epsilon}(\mathbb{C}^{n+1 +\mu}, 0 ) \xrightarrow{\pi} \mathrm{Int} B_{\delta}(\mathbb{C}^\mu , 0 )$ with $0 < \delta \ll \epsilon \ll 1$). Let $\mathcal{D} \subset S$ denote the \textit{discriminant locus}, which is the reduced irreducible hypersurface in $S$ consisting of those $t \in S$ such that $ \pi^{-1}(t)$ is singular. The $C^\infty$ fiber bundle $\mathcal{X} \setminus \pi^{-1}(\mathcal{D} ) \xrightarrow{\pi} S \setminus \mathcal{D}$ induces a \textit{monodromy homomorphism}
\begin{align}
\rho : \pi_1 ( S \setminus \mathcal{D} ) \rightarrow \pi_0 ( \mathrm{Diff}(M, \partial )  ).
\end{align}
The group $\Theta (X) = \mathrm{Im}(\rho )$ is the \textit{geometric monodromy group} of the IHS $X$. The more classical object of study is the \textit{classical monodromy group} defined as $\Theta_h (X ) = \mathrm{Im}(h \circ \rho )$ where $h$ is the canonical map $h : \pi_0 ( \mathrm{Diff}(M, \partial )  ) \rightarrow \mathrm{Aut}( H_n (M, \mathbb{Z} ) )$. That the definition of $\Theta (X)$ and $ \Theta_h (X)$ given here and the one outlined in the introduction agree follows from \cite[Chapter 3]{arnold}.

The \textit{rational double points can be characterised by the property that $\Theta_h (X)$ is finite}. In this case, Brieskorn's Theorem on simultaneous resolution \cite{brieskorn} asserts that the finite regular covering of $S \setminus \mathcal{D}$ associated to the representation $h \circ \rho$ (with deck group $\Theta_h (X)$) extends to a finite branched covering $\widetilde{S} \xrightarrow{p} S$ (with $\widetilde{S}$ smooth), and that pulling back $\mathcal{X} \rightarrow S$ by $p$ yields a family $\mathcal{X}^\prime \rightarrow \widetilde{S}$ which admits a \textit{simultaneous resolution} (i.e. there exists a \textit{smooth} morphism $\widetilde{\mathcal{X}}\rightarrow \widetilde{S}$ that factors through $\mathcal{X}^\prime \rightarrow \widetilde{S}$ and induces a minimal resolution of singularities of every fiber of the latter map). Furthermore, it follows from this that \textit{when $X$ is a rational double-point then the natural map $\Theta (X) \rightarrow \Theta_h (X)$ is an isomorphism}, which is in sharp contrast with Corollary \ref{cor:monodromygroup}.

\subsection{Equivariant smoothings}

More generally, we can study the monodromy of isolated surface singularities that are not hypersurfaces. Let $(X,0)$ be a weighted-homogeneous isolated singularity (assumed normal). In terms of its ring of regular functions $A$, this means that $A$ is a graded $\mathbb{C}$-algebra $A = \bigoplus_i A_i$, with $A_i = 0$ if $i < 0$ and $A_0 = \mathbb{C}$. The grading specifies an algebraic $\mathbb{C}^\ast$-action with fixed locus $0$, which is `good' in the sense that $0$ is repelling fixed point. A weighted-homogeneous isolated singularity can be described as an affine quasicone on a quasismooth subvariety in a weighted projective space \cite{dolgachev82,pinkham}, and the singular point is the vertex.

Theorem \ref{theorem:singularities} can be generalised to the case of \textit{equivariant smoothings} of weighted-homogeneous isolated singularities. We recall (see also \cite[\S 2.2.3]{KLMME} for a slightly more general definition) that a smoothing $ f: (\mathcal{X},0) \rightarrow (\mathbb{C},0)$ is equivariant if there exists an algebraic $\mathbb{C}^\ast$-action on $\mathcal{X}$ extending the given one on $X$, and an algebraic $\mathbb{C}^\ast$-action on $\mathbb{C}$ fixing $0$, such that $f$ is equivariant. By \cite[Proposition 2.14]{KLMME}, a non-trivial power of the monodromy of such a smoothing agrees in $\pi_0 ( \mathrm{Diff}(M, \partial ))$ with the boundary Dehn twist on the Milnor fiber. Thus, Theorem \ref{thm: main} implies the following result:

\begin{theorem}\label{thm:singularities2}
Let $X$ be a weighted-homogeneous $n = 2$-dimensional isolated singularity which is not rational (i.e. $p_g (X, 0) > 0$). Then the monodromy diffeomorphism $\psi$ of any equivariant smoothing of $X$ has infinite order in $\pi_0 ( \mathrm{Diff}(M, \partial ) )$. 
\end{theorem}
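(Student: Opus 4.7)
The plan is to derive Theorem \ref{thm:singularities2} as an essentially formal consequence of Theorem \ref{thm: main}, in parallel to the derivation of Theorem \ref{theorem:singularities} in the hypersurface case. The equivariance of the smoothing is precisely the input required to invoke \cite[Proposition 2.14]{KLMME}.

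First I would collect the necessary structural facts. The good $\mathbb{C}^*$-action on $X$ restricts to a good $S^1$-action on the link $Y = \partial M$, exhibiting $Y$ as a Seifert-fibered 3-manifold over a genus-zero orbifold with negative orbifold degree (hence equipped with the negative Seifert orientation). By \cite[Proposition 2.14]{KLMME}, for any equivariant smoothing of $X$ there exists a nonzero integer $k$ such that $\psi^k$ is isotopic rel boundary to the boundary Dehn twist $\tau_M$. Moreover, $M$ is a Stein (hence symplectic) filling of the canonical contact structure $(Y, \xi_{can})$, and by Durfee's formula $b^+(M) = 2 p_g(X)$, which is strictly positive by the non-rationality hypothesis $p_g(X) > 0$.

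Next I would split the argument into two cases according to $b_1(Y)$. If $b_1(Y) = 0$, then $Y$ is a rational homology 3-sphere and Theorem \ref{thm: main} applies directly, forcing $\tau_M$ to have infinite order in $\pi_0(\mathrm{Diff}(M, \partial))$; hence $\psi^k$, and therefore $\psi$ itself, has infinite order. If $b_1(Y) > 0$, I would invoke the elementary argument based on the classical monodromy action on homology alluded to in the discussion following Theorem \ref{theorem:singularities}: already at the homological level the monodromy has infinite order, independently of Theorem \ref{thm: main}.

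The main obstacle is of course Theorem \ref{thm: main} itself, whose proof occupies the rest of the paper; the present deduction amounts to formally assembling Theorem \ref{thm: main}, \cite[Proposition 2.14]{KLMME}, and Durfee's formula, with the case $b_1(Y) > 0$ handled separately by homological means.
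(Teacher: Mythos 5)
Your overall two-case structure matches the paper's: when $b_1(Y)=0$ one invokes Theorem~\ref{thm: main} together with \cite[Proposition 2.14]{KLMME} and the Durfee-type identity $b^+(M)=2p_g(X)>0$, and when $b_1(Y)>0$ one uses an elementary homological argument. So the skeleton of the deduction is right. However, there are two inaccuracies worth flagging.

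First, in collecting the structural facts you assert that $Y$ is Seifert-fibered over a \emph{genus-zero} orbifold; this is only so in the rational-homology-sphere case $b_1(Y)=0$, and in fact the opposite is what makes the $b_1(Y)>0$ case tractable: since the link of an isolated surface singularity has negative orbifold Euler number \cite{neumann-raymond}, $b_1(Y)>0$ forces the base orbifold $|C|$ to have \emph{positive} genus. Second, the phrase ``classical monodromy action on homology'' is misleading for what happens in the case $b_1(Y)>0$. For weighted-homogeneous singularities the classical monodromy $\psi_\ast$ on $H_2(M;\mathbb{Z})$ has \emph{finite} order (Milnor), so the absolute homology action does not detect anything. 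The paper's argument (Lemma~\ref{lem: infinite order when b1>0}) instead shows that the boundary Dehn twist $\tau_M$ acts with infinite order on the \emph{relative} homology $H_2(M,\gamma_2;\mathbb{Q})$, where $\gamma_2\subset Y$ is built from a fiber over a homologically essential loop in $|C|$; this is what uses positive genus of the base orbifold. The argument in Lemma~\ref{lem: infinite order when b1>0} also requires $b_1(M)=0$, which in the non-hypersurface setting of Theorem~\ref{thm:singularities2} is a theorem of Greuel--Steenbrink \cite{greuel-steenbrink} rather than Milnor. Your sketch hands both of these points off without noticing them; you should make the relative-homology mechanism and the Greuel--Steenbrink input explicit.
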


We recall, for context, that whether an isolated $2$-dimensional singularity is rational (i.e. $p_g (X_0 , 0 ) = 0$) only depends on its link $Y$. When $Y$ is a rational homology $3$-sphere, this occurs precisely when $Y$ is an $L$-space \cite{OzsvathSzaboPlumbed,nemethi-rational}.

\begin{example}
Consider a $2$-dimensional \textit{isolated complete intersection singularity} (abbreviated to ICIS), namely $X \subset \mathbb{C}^{2+k}$ is an $2$-dimensional subvariety with an isolated singularity at $0 \in \mathbb{C}^{2+k}$ given as $X = V(f_1 , \ldots , f_k )$ for a collection of polynomials $f = (f_1 , \ldots , f_k): \mathbb{C}^{2+k}\rightarrow \mathbb{C}^k $ with $f(0) = 0$. The ICIS is weighted-homogeneous if each $f_1 , \ldots f_k$ can be taken to be weighted-homogeneous with the same $w_1 , \ldots , w_{2 + k}$ for all $f_i$, and possibly different $d_i$ for each $f_i$. In this situation, there always exists an equivariant smoothing of $X$. Namely, consider $\mathcal{X} \subset \mathbb{C}^{2 + k}\times \mathbb{C}_t$ given by taking the equations $f_i (x) = c_i t^{d_i}$, for generic coefficients $c_i \in \mathbb{C}$, where $t$ is the smoothing parameter.

For a simple example, fix pairwise coprime $p_j \in \mathbb{Z}_{\geq 2}$, $j = 1 , \ldots , 2+k$, and consider the $2$-dimensional weighted-homogeneous ICIS cut out by equations $f_i = \sum_{j = 1}^{k} a_{ij} x_{j}^{p_j} $, $j =1 , \ldots , k$, for a generic choice of coefficients $a_{ij}\in \mathbb{C}$. Its link is the Seifert-fibered integral homology $3$-sphere denoted $\Sigma(p_1 , \ldots, p_{2+k})$, which is not an $L$-space except for the case $k = 1$ and $(p_1 , p_2 , p_3 ) = (2,3,5)$ (i.e. the Poincaré sphere $\Sigma(2,3,5)$).

\end{example}

\subsection{$\mu$-constant deformations}

Using Theorem \ref{theorem:singularities}, we can deduce infinite-order results for the mononodromy diffeomorphisms of IHS which are given by $\mu$-constant deformations of a weighted-homogeneous IHS. We discuss such generalizations now. We first recall this notion of deformation. We consider a complex polynomial $f : \mathbb{C}^{n+1}\rightarrow \mathbb{C}$ with an isolated critical point at $0 \in \mathbb{C}^{n+1}$ with $f(0) = 0$.

\begin{definition}
A $\mu$-\textit{constant deformation} of $f$ consists of a family of complex polynomials $f_s : \mathbb{C}^{n+1}\rightarrow \mathbb{C}$ with $f = f_0$ such that the coefficients of $f_s$ depend smoothly on $s \in [0,1]$, each $f_s$ has an isolated critical point at $0 \in \mathbb{C}^{n+1}$ with $f_s (0) = 0$, and the Milnor number $\mu (f_s , 0 )$ of $f_s$ at $0 \in \mathbb{C}^{n+1}$ is independent of $s \in [0,1]$. Here, the Milnor number of $f$ at $0\in \mathbb{C}^{n+1}$ is defined as the dimension of the complex vector space $(\ref{milnorring})$.
\end{definition}

Let $M_s$ denote the Milnor fiber of $f_s$. The following result first appeared in an influential work by Le--Ramanujam \cite{Le-Ramanujam}:

\begin{lemma}\label{lemma:le-ramanujam}
Let $f_s$ be a $\mu$-constant deformation of $f = f_0$. Then, for sufficiently small $s >0$, there exists a smooth and proper embedding $M_s \hookrightarrow M_0$ such that the monodromy diffeomorphism $\psi_0 \in \pi_0 ( \mathrm{Diff}(M_0 , \partial ) )$ of the Milnor fibration of $f_0$ agrees with the monodromy diffeomorphism $\psi_s  \in \pi_0 ( \mathrm{Diff}(M_s , \partial ) )$ of the Milnor fibration of $f_s$, after extending the latter by the identity to a diffeomorphism of $M_0$.

\end{lemma}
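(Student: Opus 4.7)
The plan is to deduce the lemma from the parameterized Ehresmann fibration theorem, using the $\mu$-constancy hypothesis to prevent critical points of $f_s$ from migrating away from the origin as $s$ increases. First, I would fix Milnor radii $(\epsilon_0, \delta_0)$ for $f_0$, so that $0$ is the only critical point of $f_0$ in $\overline{B_{\epsilon_0}}(\mathbb{C}^{n+1},0)$ with critical value $0$, and $V(f_0-t)$ is transverse to $\partial B_{\epsilon_0}$ for every $t \in B_{\delta_0}$. By continuity of the coefficients of $f_s$ in $s$ and compactness of $\partial B_{\epsilon_0}$, one obtains $\delta \in (0,\delta_0)$ and $s_1 > 0$ such that $V(f_s-t)$ meets $\partial B_{\epsilon_0}$ transversely for all $(s,t) \in [0,s_1] \times B_\delta$.

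The next step is to pin down the critical set of $f_s$ in $B_{\epsilon_0}$. By upper semi-continuity of the total Milnor number in a fixed ball, $\sum_{p \in \mathrm{Crit}(f_s) \cap B_{\epsilon_0}} \mu(f_s,p) \leq \mu(f_0,0)$ for small $s$; but $\mu$-constancy forces this upper bound to equal $\mu(f_s,0)$, so $0$ is the only critical point of $f_s$ in $B_{\epsilon_0}$ (after shrinking $s_1$). Therefore the map $\pi_s \co f_s^{-1}(B_\delta \setminus \{0\}) \cap B_{\epsilon_0} \to B_\delta \setminus \{0\}$ is a smooth fiber bundle with fiber $N_s := f_s^{-1}(t) \cap B_{\epsilon_0}$ carrying a canonical trivialization near the corner $\partial B_{\epsilon_0}$. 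Choosing genuine Milnor radii $\epsilon_s \ll \epsilon_0$ for $f_s$, the standard Milnor fiber $M_s = f_s^{-1}(t) \cap B_{\epsilon_s}$ sits inside $N_s$ with complement a trivial collar $\partial M_s \times [0,1]$, because $f_s$ has no critical points in $B_{\epsilon_0} \setminus B_{\epsilon_s}$. In particular, the monodromy of $\pi_s$ is isotopic (rel $\partial$) to $\psi_s$ extended by the identity on this collar.

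Finally, I would assemble $\{\pi_s\}_{s \in [0,s_1]}$ into a single smooth fibration over $(B_\delta \setminus \{0\}) \times [0,s_1]$ and apply a parameterized Ehresmann argument, with horizontal lifts chosen to match the canonical corner trivialization, to produce a smooth family of diffeomorphisms $\Phi_s \co N_s \to N_0 = M_0$ fixing the corner collar and intertwining parallel transport around $\partial B_{\delta/2}$. The restriction $\Phi_s|_{M_s} \co M_s \hookrightarrow M_0$ is the proper embedding claimed in the lemma, and the conjugation identity $\Phi_s \circ (\text{monodromy of } \pi_s) \circ \Phi_s^{-1} = \psi_0$ then yields the desired equality $\psi_0 = \psi_s$ (extended by identity) in $\pi_0(\mathrm{Diff}(M_0,\partial))$.

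The main technical obstacle is arranging the parameterized trivialization to fix boundary collars pointwise, so that $\Phi_s$ is the identity in a neighbourhood of $\partial M_0$. This is handled by first trivializing the family of fibrations in a uniform corner neighbourhood of $\partial B_{\epsilon_0} \cap f_s^{-1}(\partial B_{\delta/2})$ (where no critical phenomena occur and the trivialization is essentially tautological), and then extending inward via horizontal transport for a Riemannian metric that agrees with this corner product structure, so that the resulting flows fix the boundary by construction.
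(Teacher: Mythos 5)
Your proof is correct and reaches the same conclusion, but it takes a more self-contained route than the paper. The paper simply cites the Le--Ramanujam Non-Splitting Theorem as a black box: Lê--Ramanujam directly provide the smooth fibration over $(B_{\delta_0}(\mathbb{C},0)\setminus 0) \times [0,s_0]$ with canonically trivialized boundary, and the rest is an elementary observation that shrinking the Milnor radii produces the desired embedding with product complement. You, by contrast, reprove the core content of that theorem from scratch: the degree-theoretic conservation of the total Milnor number in a small ball combined with the $\mu$-constancy hypothesis to rule out splitting of the critical set, followed by a parameterized Ehresmann argument with horizontal lifts adapted to the corner trivialization to produce the family of diffeomorphisms $\Phi_s$. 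What your approach buys is transparency about where the $\mu$-constancy hypothesis is actually used (preventing critical points from migrating off the origin); what the paper's citation buys is brevity. One small imprecision in your write-up: what you call ``upper semi-continuity of the total Milnor number in a fixed ball'' is in fact an \emph{equality} (conservation of local degree of $\nabla f_s$ across a sphere on which it never vanishes), not a mere inequality --- your argument goes through regardless, since the weaker $\leq$ together with $\mu(f_s,0)=\mu(f_0,0)$ and the trivial $\mu(f_s,0)\leq\sum_p\mu(f_s,p)$ still forces the other terms to vanish, but it is worth having the right mental picture here. The sketch of the parameterized Ehresmann step (choosing the metric to be a product near the corner so the flow fixes collars pointwise) is reasonable and is essentially what Lê--Ramanujam's argument does internally.
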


\begin{proof}
This follows from the Le--Ramanujam `Non-Splitting' Theorem \cite{Le-Ramanujam}. This asserts that there exist Milnor radii $\epsilon_0 , \delta_0$ for $f = f_0$ and $s_0 > 0$ so small that for $F (x,s):= f_s (x)$, the mapping
\begin{align*}
B_{\epsilon_0}(\mathbb{C}^{n+1}, 0 ) \times [0,s_0 ] \cap F^{-1}\Big( ( B_{\delta_0}(\mathbb{C}, 0 ) \setminus 0 ) \times [0, s_0 ] \Big) \xrightarrow{F} ( B_{\delta_0}(\mathbb{C}, 0 ) \setminus 0 ) \times [0, s_0 ] 
\end{align*}
is a smooth fiber bundle of $2n$-manifolds with boundary, and with a canonical trivialisation of the boundary fibration as there is also a smooth fiber bundle
\begin{align*}
S_{\epsilon_0}(\mathbb{C}^{n+1}, 0 ) \times [0,s_0 ] \cap F^{-1}\Big( B_{\delta_0}(\mathbb{C}, 0 )  \times [0, s_0 ] \Big) \xrightarrow{F}  B_{\delta_0}(\mathbb{C}, 0 )  \times [0, s_0 ] .
 \end{align*}
 and the base of the latter is contractible. Thus, for any $s \in [0, s_0 ]$ the fiber bundle defined by 
\begin{align}
B_{\epsilon_0}(\mathbb{C}^{n+1}, 0 )  \cap f_{s}^{-1} ( B_{\delta_0}(\mathbb{C}, 0 ) \setminus 0 ) \xrightarrow{f_s}  B_{\delta_0}(\mathbb{C}, 0 ) \setminus 0 \label{milnors} 
\end{align}
is equivalent to the Milnor fibration (\ref{milnor1}) of $f = f_0$. For a fixed $s \in (0 , s_0 ]$ and a choice of Milnor radii $\epsilon_s, \delta_s$ for $f_s$, the Milnor fibration of $f_s$ is 
\begin{align}
B_{\epsilon_s}(\mathbb{C}^{n+1}, 0 )  \cap f_{s}^{-1} ( B_{\delta_s}(\mathbb{C}, 0 ) \setminus 0 ) \xrightarrow{f_s}  B_{\delta_s}(\mathbb{C}, 0 ) \setminus 0  \label{milnorfs}.
\end{align}
After possibly shrinking the radii $\epsilon_s , \delta_s$ further, we may suppose $ \epsilon_s < \epsilon_0$ and $\delta_s \leq \delta_0$. Thus, we obtain an embedding of (\ref{milnorfs}) into (\ref{milnors}). This completes the proof of the first assertion. The last assertion follows from the fact that 
\begin{align}
( B_{\epsilon_0}(\mathbb{C}^{n+1}, 0 ) \setminus \mathrm{Int} B_{\epsilon_s}(\mathbb{C}^{n+1}, 0 ) )\cap f_{s}^{-1} ( B_{\delta_s}(\mathbb{C}, 0 ) ) \rightarrow  B_{\delta_s}(\mathbb{C}, 0 ) 
\end{align}
is a smooth fiber bundle over a contractible base.
\end{proof}

\begin{remark}
The cobordism $W := M_0 \setminus M_s$ is naturally a Stein cobordism from $\partial M_s$ to $\partial M_0$ with respect to the canonical contact structures on the singularity links. By the main result of \cite{Le-Ramanujam}, when $n \neq 2$ the cobordism $W$ is diffeomorphic to a product. Indeed, when $n > 2$ they show that $W$ is an $h$-cobordism between simply-connected $(2n-1)$-dimensional manifolds, and thus the h-cobordism Thereom applies. When $n = 2$ the cobordism $W = M_0 \setminus M_s$ is an integral homology cobordism and it is an open problem whether, in general, $W$ is homeomorphic to a product cobordism (this is the famous Le--Ramanujam problem).
\end{remark}

Combining Lemma \ref{lemma:le-ramanujam} with Theorem \ref{theorem:singularities} we immediately deduce the following generalization:

\begin{theorem}
Suppose that $X = V(f)$ is a weighted-homogeneous IHS of dimension $n = 2$ and $X$ is not a rational double-point. If $f_s$ is a $\mu$-constant deformation of $f = f_0$, then for sufficiently small $s > 0$ the monodromy diffeomorphism of the Milnor fibration of $f_s$ has infinite order (in the smooth mapping class group of the Milnor fiber of $f_s$).
\end{theorem}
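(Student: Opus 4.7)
The plan is to deduce the statement as an essentially immediate consequence of Theorem~\ref{theorem:singularities} combined with Lemma~\ref{lemma:le-ramanujam}. The key input is the embedding $\iota : M_s \hookrightarrow M_0$ produced by the lemma for sufficiently small $s > 0$, with the property that extending $\psi_s$ by the identity across the complementary cobordism $W = M_0 \setminus \iota(\mathrm{Int}\, M_s)$ recovers $\psi_0$ in $\pi_0(\mathrm{Diff}(M_0, \partial))$. Since $f_0$ is weighted-homogeneous and $X = V(f_0)$ is not a rational double-point, Theorem~\ref{theorem:singularities} tells us that $\psi_0$ has infinite order in $\pi_0(\mathrm{Diff}(M_0, \partial))$, and the strategy is to transport this infinite-order statement back to $\psi_s$.

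Concretely, I would argue by contradiction. Suppose $\psi_s^N$ is isotopic to $\mathrm{Id}_{M_s}$ rel $\partial M_s$ for some $N \geq 1$. First I would set up the \emph{extension-by-identity map}
\[
E : \pi_0\bigl(\mathrm{Diff}(M_s, \partial)\bigr) \longrightarrow \pi_0\bigl(\mathrm{Diff}(M_0, \partial)\bigr),
\]
defined by choosing a representative supported in the interior of $\iota(M_s)$ (using a collar of $\partial M_s$ inside $M_0$ on which every representative is the identity) and extending by $\mathrm{Id}_W$. A routine check shows $E$ is a well-defined group homomorphism: composition is preserved because extensions are the identity on $W$, and any isotopy rel $\partial M_s$ extends to an isotopy rel $\partial M_0$ by the constant identity on $W$. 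Then Lemma~\ref{lemma:le-ramanujam} says exactly that $E(\psi_s) = \psi_0$, so
\[
\psi_0^N = E(\psi_s)^N = E(\psi_s^N) = 1 \quad \text{in } \pi_0(\mathrm{Diff}(M_0, \partial)),
\]
contradicting Theorem~\ref{theorem:singularities}.

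I do not expect any real obstacle here; the deduction is essentially a formality once Lemma~\ref{lemma:le-ramanujam} is in hand. The one point worth stating carefully is the well-definedness of $E$, but this is standard and follows from the fact that Lemma~\ref{lemma:le-ramanujam} produces $\iota$ as a proper smooth embedding with a collar neighbourhood of $\partial \iota(M_s)$ in $M_0$. In particular, I want to emphasize that the argument sidesteps the Lê--Ramanujam problem entirely: it does not require $W$ to be diffeomorphic (or even homeomorphic) to a product cobordism, only that extension by the identity defines a homomorphism of mapping class groups carrying $\psi_s$ to $\psi_0$.
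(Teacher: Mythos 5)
Your proposal is correct and coincides with the paper's own argument: the paper simply states that the theorem follows ``immediately'' by combining Lemma~\ref{lemma:le-ramanujam} with Theorem~\ref{theorem:singularities}, and the extension-by-identity homomorphism you spell out is precisely the intended mechanism. Your remark that the argument sidesteps the L\^e--Ramanujam problem is also consistent with the paper's own remark following the lemma.
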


The $\mu$-constant deformations of weighted-homogeneous IHS are well-understood, as we now recall. Let $w_i , d \in \mathbb{Z}_{>0}$ stand for the weights of $f$. There is an induced grading on the base $S$ of the semi-universal deformation of the weighted-homogeneous IHS $V(f)$ (described in (\ref{milnorring}) ) induced by $|x_i | = w_i $. As before, we fix a basis of $S$ given by a collection $g_1 , \ldots , g_\mu$ of weighted-homogeneous polynomials. Take a smooth family $f_s$ of complex polynomials with $f = f_0$ parametrised by $s \in [0,1]$ which is given by a smooth map $t: [0,1] \rightarrow S$, i.e. $f_s (x) = f(x) + \sum_{i = 1}^\mu t_i (s) g_i (x)$. By a Theorem of Varchenko \cite{varchenko}, $f_s$ is a $\mu$-constant deformation (for small $s \geq 0$) precisely when the weight $|g_i|$ of $g_i$ is at least $d$ whenever $t_i (s)$ is not identically zero near $ s = 0$.

\begin{example}
The base $S \cong \mathbb{C}^{12}$ of the semi-universal deformation of $V(x^2 + y^3 + z^7 )$ has exactly one weight $\geq d$, namely $|yz^5| = w_2 + 5w_3 = 14 + 5 \times 6 = 44 \geq 42 = d$. Thus, $f_s = x^2 + y^3 + z^7 + s y z^5$ gives a $\mu$-constant deformation (for small $s \geq 0$) of $f_0 = x^2+ y^3 +z^7$.
\end{example}

\section{Equivariant monopole Floer homology of Seifert manifolds}

We start by discussing the equivariant monopole Floer homology of Seifert manifolds. Let $Y$ be a negatively oriented Seifert rational homology 3-sphere and let $\fs$ be a spin-c structure $Y$. We fix an odd prime $p$ and let $G=\F=\mathbb{Z}/p$. Consider the Seifert $G$-action on $Y$ (i.e. the $G$-action restricted from the Seifert $S^1$-action). We choose $p$ such that $G$ does not intersect the isotropy groups of the Seifert $S^1$-action. In this case, the $G$-action is free on $Y$ and the quotient space $Y/G$ is another Seifert manifold. It is shown in \cite{baraglia2024brieskorn} that the $G$-action can be lifted to the spinor bundle over $Y$. We fix such a lifting and let $\mathfrak{s}/G$ be the spin-c structure on $Y/G$ obtained by taking the quotient with this lifting. 

Let us briefly recall the Seiberg--Witten-Floer spectrum $\SWF(Y,\fs)$ of $(Y,\mathfrak{s})$ due to Manolescu \cite{ManolescuStablehomotopytype}. 
This spectrum $\SWF(Y,\fs)$ is $S^1$-equivariant, allowing suspensions by copies of the real 1-dimensional trivial $S^1$-representation $\mathbb{R}$ and the standard complex 1-dimensional $S^1$-representation $\mathbb{C}$.
The main ingredient of this construction is the Conley index $I_{\lambda}^\mu(Y,\fs,g)$, which is a pointed finite CW complex with an $S^1$-action, of a dynamical system of a finite-dimensioal approximation of an infinite-dimensional dynamical system associated to the Chern-Simons-Dirac functional on $Y$. 
The Conley index $I_\lambda^\mu(Y,\fs,g)$ depends on the choice of metric $g$ and real numbers $\mu, -\lambda>0$, which correspond to the choice of finite-dimensional approximation.  A metric-dependent spectrum $\SWF(Y,\fs,g)$ is defined as $\Sigma^{-V^0_{\lambda}(Y,\fs,g)}I_{\lambda}^{\mu}(Y,\fs,g)$, where $\Sigma^{-V^0_{\lambda}(Y,\fs,g)}$ denotes the formal desuspension by the vector space $V^0_{\lambda}(Y,\fs,g)$, the span of eigenspaces of the linearization of the Seiberg--Witten equations on $Y$ with eigenvalues in the interval $(\lambda, 0]$. Using $\SWF(Y,\fs,g)$, a metric-independent spectrum $\SWF(Y,\fs)$ is defined as
\[
\SWF(Y,\fs) = \Sigma^{n(Y,\fs,g)\mathbb{C}}\SWF(Y,\fs,g),
\]
where $\Sigma^{n(Y,\fs,g)\mathbb{C}}$ denotes the formal suspension by $n(Y,\fs,g)$-copies of $\mathbb{C}$ and $n(Y,\fs,g) \in \mathbb{Q}$ is an index-theoretic correction term:
\begin{align}
\label{eq: n(Y,g)}
n(Y,\fs,g)=\operatorname{ind}_{\mathbb{C}}(D^+_M)-\frac{c^2_{1}(\fs_{M})-\sigma(M)}{8}\in \mathbb{Q}.    
\end{align}
Here $(M,\fs_{M})$ is a smooth spin-c 4-manifold bounded by $(Y,\fs)$, $D^+_M$ is the spin-c Dirac operator of $(M,\fs_{M})$, and $\operatorname{ind}_{\mathbb{C}}(D^+_M)$ is the Atiyah--Patodi--Singer index. By the theorem of Lidman--Manolescu \cite{LidmanManolescuEquivalence}, various versions of $S^{1}$-equivariant homology of $\SWF(Y,\fs)$ recover various flavours of monopole Floer homology of $(Y,\fs)$ \cite{KronheimerMonopoles}, for example
\begin{align}
\label{eq: from SWF to HM}
\widecheck{HM}^{*}(Y,\fs) \cong \tilde{H}_{S^1}^\ast(\SWF(Y,\fs)) := \tilde{H}_{S^1}^{\ast-2n(Y,\fs,g)+\dim V^0_{\lambda}(Y,\fs,g)}(I_{\lambda}^{\mu}(Y,\fs,g)).
\end{align}

Henceforth, we freely use the standard notation of monopole Floer homology in \cite{KronheimerMonopoles}.

Next, we consider the $G$-equivariant monopole Floer homology.
From now we assume the metric $g$ on $Y$ is taken to be $G$-invariant.

Given a pointed $G$-space $Z$, let $Z /\!\!/ G$ denote the homotopy quotient, $Z /\!\!/ G =(Z \wedge EG_+)/G$.
As proved by Baraglia--Hekmati \cite{Baraglia2024equivariant}, by doing $G$-equivariant finite-dimensional approximations of Seiberg--Witten equations and taking $G$-invariant Conley index, one can make $I^{\mu}_{\lambda}(Y,\mathfrak{s},g)$ into a finite $(G\times S^1)$-CW complex. Then $I^{\mu}_{\lambda}(Y,\mathfrak{s},g)/\!\!/ G$ is an infinite $S^{1}$-CW complex. 

One can define various equivariant Floer 
cohomology groups of the spin-c $3$-manifold $(Y,\fs)$ with $G$-action, by applying to $I^{\mu}_{\lambda}(Y,\mathfrak{s},g)/\!\!/ G$ the reduced $S^1$-Borel cohomology $\tilde{H}^{\ast}_{S^{1}}(-)$, the reduced $S^1$-coBorel cohomology $c\tilde{H}^{\ast}_{S^{1}}(-)$, the reduced Tate cohomology $t\tilde{H}^{\ast}_{S^{1}}(-)$, and the reduced nonequivariant cohomology $\tilde{H}^{\ast}(-)$. More precisely, we define 
\begin{equation}\label{eq: equivariant HM}
\begin{split}
\widecheck{HM}^{\ast}_{G}(Y,\fs;\F):=&\tilde{H}^{\ast-2n(Y,\fs,g)+\dim V^0_{\lambda}(Y,\fs,g)}_{S^{1}}(I^{\mu}_{\lambda}(Y,\mathfrak{s},g)/\!\!/ G;\F), \\
\widehat{HM}^{\ast}_{G}(Y,\fs;\F):=&c\tilde{H}^{\ast-2n(Y,\fs,g)+\dim V^0_{\lambda}(Y,\fs,g)}_{S^{1}}(I^{\mu}_{\lambda}(Y,\mathfrak{s},g)/\!\!/ G;\F),\\
\overline{HM}^{\ast}_{G}(Y,\fs;\F):=&t\tilde{H}^{\ast-2n(Y,\fs,g)+\dim V^0_{\lambda}(Y,\fs,g)}_{S^{1}}(I^{\mu}_{\lambda}(Y,\mathfrak{s},g)/\!\!/ G;\F),\\
\widetilde{HM}^{*}_{G}(Y,\fs;\F):=&\tilde{H}^{\ast-2n(Y,\fs,g)+\dim V^0_{\lambda}(Y,\fs,g)}(I^{\mu}_{\lambda}(Y,\fs,g)/\!\!/ G ;\F).
\end{split}
\end{equation}
As one changes the auxiliary choices $g,\mu,\lambda$, the space $I^{\mu}_{\lambda}(Y,\fs,g)$ changes by $G$-equivariant homotopy equivalences and suspension/desuspension by finite dimension $G$-representations. Since the order of $G$ is odd, the $G$-actions preserve the orientations of these representation spaces. By the Thom isomorphism theorem, the grading normalization in (\ref{eq: equivariant HM}) makes these equivariant Floer cohomology invariant of $(Y,\fs)$.

Note the fibration 
\begin{equation}\label{eq: SWF fibration}
I_{\lambda}^{\mu}(Y,\fs,g) 
\xrightarrow{q} (I_{\lambda}^{\mu}(Y,\fs,g)\times EG)/G
\xrightarrow{\pi}
BG.    
\end{equation}
This makes $\widetilde{HM}^{*}_{G}(Y,\fs;\F)$ into a graded module over the ring 
\begin{equation}\label{eq: Q1}
Q_{1}:=H^{\ast}_{G}(\text{point};\F)=H^{*}(BG;\F)=\F[R,S]/(R^2)    
\end{equation}
and makes $\widecheck{HM}^{\ast}_{G}(Y,\fs;\F)$, $\widehat{HM}^{\ast}_{G}(Y,\fs;\F)$ and $\overline{HM}^{\ast}_{G}(Y,\fs;\F)$ into graded modules over the ring 
\begin{equation}\label{eq: Q2}
Q_2:=H^{\ast}_{S^{1}\times G}(\text{point};\F)=H^{\ast}_{S^{1}}(BG;\F)=\F[U,R,S]/(R^2).   \end{equation}
Here $\operatorname{deg}(S)=\operatorname{deg}(U)=2$ and $\operatorname{deg}(R)=1$. 

By standard property of Borel, coBorel and Tate cohomology \cite{GreenlessMay}, we have the exact triangle of $Q_{2}$-modules 
\begin{equation}\label{eq: exact triangle 1}
\xymatrix{&\widecheck{HM}^{\ast}_{G}(Y,\fs;\F) \ar[rd]^{i}&\\
\widehat{HM}^{\ast}_{G}(Y,\fs;\F)\ar[ur]^{j}& & \overline{HM}^{\ast}_{G}(Y,\fs;\F),\ar[ll]^{k}
}
\end{equation}
with $\operatorname{deg}(k)=1$, $\operatorname{deg}(i)=\operatorname{deg}(j)=0$. And we have  exact triangles of $Q_{1}$-modulies
\begin{equation}\label{eq: exact triangle 2}
\xymatrix{&\widetilde{HM}^{\ast}_{G}(Y,\fs;\F) \ar[rd]^{l}&\\
\widecheck{HM}^{\ast}_{G}(Y,\fs;\F)\ar[ur]^{m}& & \widecheck{HM}^{\ast}_{G}(Y,\fs;\F)\ar[ll]^{\cdot U}}
\end{equation}
and 
\begin{equation}\label{eq: exact triangle 3}
\xymatrix{&\widetilde{HM}^{\ast}_{G}(Y,\fs;\F) \ar[rd]^{p}&\\
\widehat{HM}^{\ast}_{G}(Y,\fs;\F)\ar[ur]^{n}& & \widehat{HM}^{\ast}_{G}(Y,\fs;\F),\ar[ll]^{\cdot U}}
\end{equation}
with $\operatorname{deg}(l)=\operatorname{deg}(p)=-1$ and $\operatorname{deg}(m)=\operatorname{deg}(n)=0$.

The reduced $G$-equivariant monopole Floer homology is defined as  \[HM^{*}_{G,\redu}(Y,\fs;\F):=\operatorname{Im} (k:\widehat{HM}^{\ast}_{G}(Y,\fs;\F)\to \widecheck{HM}^{\ast}_{G}(Y,\fs;\F)).\]
It can be regarded either as a quotient $Q_2$-module of  $\widehat{HM}^{\ast}_{G}(Y,\fs;\F)$ or a sub $Q_2$-module of $\widecheck{HM}^{\ast}_{G}(Y,\fs;\F)$. 

Note that the fiber inclusion map $q$ (\ref{eq: SWF fibration}) induces natural transformations from various $G$-equivariant monopole Floer cohomologies to various nonequivariant monopole Floer cohomologies. In particular, we have a map of $\F[U]$-modules  
\begin{equation}\label{eq: equivariant red to red}
q^*:HM^{*}_{G,\redu}(Y,\fs;\F)\to HM^{*}(Y,\fs;\F)    
\end{equation}

\begin{lemma}\label{lem: S-localization}
For $p$ large enough, we have 
\[
S^{-1}\widetilde{HM}^{*}_{G}(Y,\fs;\F)\cong \F[S,S^{-1},R]/(R^2)
\]
as relative $\mathbb{Z}$-graded $Q_1$-modules.
\end{lemma}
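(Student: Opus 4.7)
The plan is to apply the Atiyah--Borel localization theorem to reduce the computation of $S^{-1}\widetilde{HM}^*_G(Y,\fs;\F)$ to Borel cohomology of the $G$-fixed subspace of the Conley index, and then identify the latter with the Conley index of the quotient $3$-manifold $Y/G$, which is an $L$-space for $p\gg 1$ by Baraglia--Hekmati \cite{baraglia2024brieskorn}.

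First I would invoke Atiyah--Borel localization. Since $I^\mu_\lambda(Y,\fs,g)$ is a finite pointed $G$-CW complex by the construction of \cite{Baraglia2024equivariant}, and since for $G=\mathbb{Z}/p$ with $p$ odd the generator $S\in H^2(BG;\F)$ restricts to zero on the equivariant cohomology of any free $G$-orbit, inclusion of the fixed subspace induces an isomorphism of $Q_1$-modules
\[
S^{-1}\tilde{H}^*_G(I^\mu_\lambda(Y,\fs,g);\F) \ \xrightarrow{\cong}\ S^{-1}\tilde{H}^*_G(I^\mu_\lambda(Y,\fs,g)^G;\F).
\]
Since $G$ acts trivially on the fixed subspace, the right-hand side splits by K\"unneth as $\tilde{H}^*(I^\mu_\lambda(Y,\fs,g)^G;\F)\otimes \F[S,S^{-1},R]/(R^2)$.

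Next I would identify the $G$-fixed Conley index with the one for $Y/G$. The choice of $p$ makes $G$ act freely on $Y$, so $G$-invariant Seiberg--Witten configurations on $Y$ correspond to configurations on $Y/G$ with spin-c structure $\fs/G$. The linearised operator on $Y$ is $G$-equivariant, and its eigenspaces in the window $(\lambda,\mu)$ split into $G$-invariant and non-invariant parts, with the invariant part being exactly the analogous eigenspace decomposition for the descended operator on $Y/G$. Combined with the general principle that $G$-fixed points of the Conley index of a $G$-equivariant flow are canonically identified with the Conley index of the $G$-fixed flow, this yields
\[
I^\mu_\lambda(Y,\fs,g)^G \ \simeq\ I^\mu_\lambda(Y/G,\fs/G,g/G).
\]
By \cite{baraglia2024brieskorn}, for $p\gg 1$ the quotient $Y/G$ is an $L$-space, so $HM_{\redu}(Y/G,\fs/G;\F)=0$ and consequently the exact triangle \eqref{eq: exact triangle 2} (applied to $Y/G$ in the $G$-trivial setting) forces $\tilde{H}^*(I^\mu_\lambda(Y/G,\fs/G,g/G);\F) \cong \F$ concentrated in a single degree. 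Substituting into the splitting above yields
\[
S^{-1}\widetilde{HM}^*_G(Y,\fs;\F) \cong \F[S,S^{-1},R]/(R^2)
\]
as a relatively $\mathbb{Z}$-graded $Q_1$-module.

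The main obstacle is the second step, namely the identification of $G$-fixed points of the Conley index with the Conley index of $Y/G$. One needs to check that the $G$-equivariant finite-dimensional approximation commutes with restriction to $G$-invariants at the level of the full flow (not merely the linearised eigenspaces), and to track the difference between the grading normalizations in \eqref{eq: equivariant HM} for $(Y,\fs,g)$ and for $(Y/G,\fs/G,g/G)$. The overall shift coming from these normalizations is invisible to the relative $\mathbb{Z}$-grading of the localized module, but verifying the identification rigorously requires carefully adapting the $G$-equivariant Conley index framework of \cite{Baraglia2024equivariant} to the situation at hand.
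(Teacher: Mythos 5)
Your approach coincides in all essentials with the paper's: equivariant localization to reduce to the $G$-fixed set of the Conley index, identification of that fixed set with the Conley index of $Y/G$, and then the $L$-space property of $Y/G$. Two remarks. First, what you flag as the ``main obstacle'' --- the identification $I^\mu_\lambda(Y,\fs,g)^G \simeq I^\mu_\lambda(Y/G,\fs/G,g/G)$ --- is already established by Lidman--Manolescu \cite{LidmanManolescu}, so the paper simply cites that result rather than re-deriving it; you can do the same. Second, and more substantively, there is a real gap in your appeal to \cite{baraglia2024brieskorn}: what \cite[Proposition 6.2]{baraglia2024brieskorn} proves is $HM_{\reduced}(Y/G, \fs_0; \F) = 0$ for $p \gg 0$, where $\fs_0$ is the \emph{canonical} spin-c structure on $Y/G$; it does not directly assert that $Y/G$ is an $L$-space, and since $\fs/G$ need not equal $\fs_0$, you cannot conclude $\widetilde{HM}^*(Y/G, \fs/G;\F) \cong \F$ from that proposition alone. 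The paper bridges the gap with a separate argument: if $Y/G$ were not an $L$-space, then by \cite{lisca-stipsicz} the canonical spin-c structure $\fs_0$ would support a taut foliation, which by \cite{monolens} would force $HM_{\reduced}(Y/G, \fs_0) \neq 0$, contradicting Proposition 6.2. This yields the $L$-space property of $Y/G$, which then gives the vanishing in \emph{all} spin-c structures including $\fs/G$. Your proof needs this additional step (or an independent reason that $\fs/G = \fs_0$, which is not true in general).
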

\begin{proof}
The localization theorem in $G$-equivariant cohomology \cite[III, Theorem 3.8]{tomDieck}, applied to the finite $G$-CW complex $I^{\mu}_{\lambda}(Y,\fs,g)$, implies the isomorphism
\[
S^{-1}\widetilde{HM}^{*}_{G}(Y,\fs;\F)=S^{-1}\tilde{H}^{*}_{G}(I^{\mu}_{\lambda}(Y,\fs,g);\F)\cong S^{-1}\tilde{H}^{*}_{G}(I^{\mu}_{\lambda}(Y,\fs,g)^{G};\F)
\]
of relative $\mathbb{Z}$-graded $Q_{1}$-modules. Here $I^{\mu}_{\lambda}(Y,\fs,g)^{G}$ denotes the space of $G$-fixed points. It is proved in \cite{LidmanManolescu} that 
\[I^{\mu}_{\lambda}(Y,\fs,g)^{G}\simeq I^{\mu}_{\lambda}(Y/G,\fs/G,g/G).\] 
So we have the isomorphism 
\[S^{-1}\widetilde{HM}^{*}_{G}(Y,\fs;\mathbb{F})\cong \widetilde{HM}^{*}(Y/G,\fs/G;\mathbb{F})\otimes_{\F} \F[S,S^{-1},R]/(R^2)\]
as relative $\mathbb{Z}$-graded modules. On the other hand, we have 
\[
HM_{\reduced}(Y/G,\fs_0;\mathbb{F})=0
\]
when $p\gg 0$ \cite[Proposition 6.2]{baraglia2024brieskorn}. Here $\fs_0$ denotes the canonical spin-c structure on $Y/G$. So $Y/G$ must be an $L$-space because otherwise $\fs_0$ would support a taut foliation \cite{lisca-stipsicz}, and thus we would have $HM_{\redu}(Y/G , \mathfrak{s}_0 ; \mathbb{F} ) \neq 0$ by \cite{monolens}. So we have $\widetilde{HM}^{*}(Y/G,\fs/G;\mathbb{F})\cong \F$ and 
\[
S^{-1}\widetilde{HM}^{*}_{G}(Y,\fs;\mathbb{F})\cong \F[S,S^{-1},R]/(R^2).
\] \end{proof}
From now on, we always assume $p$ is large enough such that Lemma \ref{lem: S-localization} can be applied. 
\begin{lemma}\label{lem: Sm surjective}
The map 
\[
S^{-1}m:
S^{-1}\widecheck{HM}^{*}_{G}(Y,\fs;\mathbb{F})\to S^{-1}\widetilde{HM}^{*}_{G}(Y,\fs;\mathbb{F}),
\]
obtained by inverting the map $m$
in (\ref{eq: exact triangle 2}), is surjective.
\end{lemma}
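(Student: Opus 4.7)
The plan is to deduce surjectivity of $S^{-1}m$ from the exactness of the triangle \eqref{eq: exact triangle 2} after inverting $S$. Since $S$-localization is exact on $Q_2$-modules, the triangle yields an exact sequence
\[
S^{-1}\widecheck{HM}^{*}_{G}(Y,\fs;\F) \xrightarrow{S^{-1}m} S^{-1}\widetilde{HM}^{*}_{G}(Y,\fs;\F) \xrightarrow{S^{-1}l} S^{-1}\widecheck{HM}^{*-1}_{G}(Y,\fs;\F) \xrightarrow{\cdot U} S^{-1}\widecheck{HM}^{*+1}_{G}(Y,\fs;\F).
\]
Thus $S^{-1}m$ is surjective if and only if $S^{-1}l=0$, which in turn (by exactness at the third term) is equivalent to multiplication by $U$ being injective on $S^{-1}\widecheck{HM}^{*}_{G}(Y,\fs;\F)$.

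To verify this $U$-injectivity, I apply the $G$-equivariant localization theorem in the manner of Lemma \ref{lem: S-localization}, but now to the $S^{1}$-Borel cohomology of $X=I_{\lambda}^{\mu}(Y,\fs,g)$ in place of the non-$S^{1}$-equivariant cohomology. Combined with the identification $X^{G}\simeq I_{\lambda}^{\mu}(Y/G,\fs/G,g/G)$ of \cite{LidmanManolescu}, this yields an isomorphism
\[
S^{-1}\widecheck{HM}^{*}_{G}(Y,\fs;\F) \cong \widecheck{HM}^{*}(Y/G,\fs/G;\F) \otimes_{\F} \F[R,S,S^{-1}]/(R^{2})
\]
of relatively $\Z$-graded modules over $\F[U,R,S,S^{-1}]/(R^{2})$, in which $U$ acts only on the first tensor factor. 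Hence it suffices to prove that $\cdot U$ is injective on $\widecheck{HM}^{*}(Y/G,\fs/G;\F)$.

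As established in the proof of Lemma \ref{lem: S-localization}, for $p$ sufficiently large $Y/G$ is a Seifert rational homology sphere that is an $L$-space, so $HM_{\redu}(Y/G,\fs/G;\F)=0$. The exactness of the triangle \eqref{eq: exact triangle 1} applied to $(Y/G,\fs/G)$ then forces the map $i:\widecheck{HM}^{*}(Y/G,\fs/G;\F)\to \overline{HM}^{*}(Y/G,\fs/G;\F)$ to be injective, because its kernel equals the image of $j$, which is the reduced Floer cohomology. Since $Y/G$ is a rational homology sphere and $\fs/G$ is torsion, we have $\overline{HM}^{*}(Y/G,\fs/G;\F)\cong \F[U,U^{-1}]$, which is $U$-torsion free; therefore so is its submodule $\widecheck{HM}^{*}(Y/G,\fs/G;\F)$, as required.

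The only potentially subtle point is verifying that the $G$-equivariant localization theorem commutes with the Gysin-type triangle \eqref{eq: exact triangle 2}; this is formal, since the triangle is natural under the restriction map $X^{G}\hookrightarrow X$ and this restriction becomes an isomorphism after inverting $S$.
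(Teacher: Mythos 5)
Your reduction of the lemma to $U$-injectivity on $S^{-1}\widecheck{HM}^{*}_{G}(Y,\fs;\F)$ is correct and is a cleaner reformulation than what the paper actually proves (the paper's Serre spectral sequence argument shows surjectivity directly without passing through $U$-injectivity). However, the middle step has a genuine gap.

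You invoke the $G$-equivariant localization theorem ``applied to the $S^{1}$-Borel cohomology of $X=I_{\lambda}^{\mu}(Y,\fs,g)$,'' i.e., you want $S^{-1}\tilde{H}^{*}_{S^{1}\times G}(X)\cong S^{-1}\tilde{H}^{*}_{S^{1}\times G}(X^{G})$. But the theorem cited by the paper (tom Dieck III.3.8), and applied in Lemma~\ref{lem: S-localization}, is for the $G$-equivariant cohomology $\tilde{H}^{*}_{G}(X)$ of a finite $G$-CW complex, where the only isotropy groups that survive $S$-localization are $1$ and $G$. Once you pass to the $(S^{1}\times G)$-equivariant theory, the relevant isotropy subgroups become closed subgroups $H\subset S^{1}\times G$, and $S$-localization kills a cell $D^{n}\times(S^{1}\times G)/H$ only when the composite $H\hookrightarrow S^{1}\times G\to G$ is \emph{not} surjective. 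The Conley index of a Seifert manifold with free $G$-action has, in addition to $S^{1}\times G$ and $1\times G$, the ``twisted diagonal'' isotropy subgroups $G_{\chi}=\{(\chi(g)^{-1},g):g\in G\}$ for nontrivial characters $\chi:G\to S^{1}$, arising at irreducible configurations whose spinor component transforms by $\chi$. For such $H=G_{\chi}$, the restriction $H^{*}(B(S^{1}\times G))\to H^{*}(BG_{\chi})\cong\F[R',S']/((R')^{2})$ sends $S\mapsto S'$ (a unit multiple), because $G_{\chi}\to G$ is an isomorphism. Hence these cells are not annihilated by inverting $S$, and $X^{G}$ does not capture all of $S^{-1}\tilde{H}^{*}_{S^{1}\times G}(X)$. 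Your claimed isomorphism
\[
S^{-1}\widecheck{HM}^{*}_{G}(Y,\fs;\F)\cong\widecheck{HM}^{*}(Y/G,\fs/G;\F)\otimes_{\F}\F[R,S,S^{-1}]/(R^{2})
\]
is therefore not a consequence of the cited localization theorem; the closing remark that commutation with the Gysin triangle ``is formal'' does not address this.

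For comparison, the paper avoids this issue entirely by never localizing the $S^{1}$-Borel theory directly: it runs the integral Serre spectral sequence for $I_{\lambda}^{\mu}\to(I_{\lambda}^{\mu}\times EG)/G\to BG$, and an explicit page-by-page induction (together with the graded-roots computation of $\widecheck{HM}^{*}(Y,\fs;\mathbb{Z})$) shows that $\widecheck{HM}^{*}_{G}(Y,\fs;\mathbb{Z})$ is a free rank-one $\mathbb{Z}[U,S]/(pS)$-module in a cone of degrees, from which surjectivity of $S^{-1}m$ follows after inverting $S$ and changing coefficients. Your identification of the target as $\widecheck{HM}^{*}(Y/G,\fs/G;\F)\otimes\F[R,S,S^{-1}]/(R^{2})$ may plausibly hold after accounting for all the $G_{\chi}$-fixed strata and using that $Y/G$ is an $L$-space in \emph{every} spin-c structure, but as written the argument is missing this.
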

\begin{proof}
A spectral sequence from the nonequivariant monopole Floer cohomology to the equivariant monopole Floer cohomology is established in \cite{Baraglia2024equivariant}, which arises as the Serre spectral sequence for (\ref{eq: SWF fibration}). We use integer coefficients for this spectral sequence. Then each page is a module over $Q=\mathbb{Z}[U,S]/(pS)$, with $U$ of grading $(0,2)$ and $S$ of grading $(2,0)$. Let $h(Y,\fs)$ be the Fr\o yshov invariant of $(Y,\fs)$. We normalize the grading such that
\[
E^{r,q}_{2}=H^{r}(BG;\widecheck{HM}^{-2h(Y,\fs)+q}(Y,\fs;\mathbb{Z})),\quad r\in \mathbb{N}, q\in \mathbb{Z},
\]
Here $h(Y,\fs)$ denotes the Fr\o yshov invariant. The spectral sequence converges to $\widecheck{HM}_{G}^{\ast} (Y,\fs;\mathbb{Z})$. Since the generator of the $G$-action on $Y$ is isotopic to the identity, the monodromy action of (\ref{eq: SWF fibration}) on $\widecheck{HM}^{*}(Y,\fs;\mathbb{Z})$ is trivial. Moreover, $\widecheck{HM}^{*}(Y,\fs;\mathbb{Z})$ can be computed by the graded roots algorithm \cite{Nemethi11}. In particular, the two summands of the decomposition
\[
\widecheck{HM}^{*}(Y,\fs;\mathbb{Z})\cong \mathbb{Z}[U]_{-2h(Y,\fs)}\oplus HM^{*}_{\reduced}(Y,\fs;\mathbb{Z}),
\]
are supported in different relative $\mathbb{Z}/2$-grading gradings. This decomposition induces a decomposition $E^{*,*}_{2}=E^{*,\text{even}}_{2}\oplus E^{*,\text{odd}}_{2}$, where 
\[
E^{*,\text{odd}}_{2}=(HM^{*}_{\reduced}(Y,\fs;\mathbb{Z})\otimes_{\mathbb{Z}}H^{*}(BG;\mathbb{Z})))\cong HM^{*}_{\reduced}(Y,\fs;\mathbb{Z})[S]/(pS),
\]   
and
\[
E^{*,\text{even}}_{2}=(\mathbb{Z}[U]\otimes_{\mathbb{Z}} H^{*}(BG;\mathbb{Z}))\cong Q\cdot \theta.
\]
Here $\theta$ is a generator of $\mathbb{Z}\cong E^{0,0}_{2}$.  Note that $E^{r,*}_{2}=0$ if $r$ is odd. So $E^{*,\text{even}}_{2}, E^{*,\text{odd}}_{2}$ are supported in different mod-2 total degrees. Furthermore, since $E^{*,\text{odd}}_{2}$ is $U$-torsion but $E^{*,\text{even}}_{2}$ is $U$-torsion-free. Thus, the differential on $E^{*,*}_{2}$ is of the form 
\[
d_{2}: E^{*,\text{even}}_{2}\to E^{*,\text{odd}}_{2}.
\]
Hence $E^{*,\text{even}}_{3}=\ker d_{2}$  is $U$-torsion-free and $E^{*,\text{odd}}_{3}=\operatorname{coker}d_{2}$ is $U$-torsion. Similarly, the differential on $E^{*,*}_{k}$ has the form 
\[
d_{k}: E^{*,\text{even}}_{k}\to E^{*,\text{odd}}_{k}.
\]
We will inductively prove that the following claim: 
For each even $k\geq 2$, there exists an integer $q_{k}\geq 0$ and a quotient $\mathbb{F}[U]$-module $V_{k}$ of $HM^{*}_{\reduced}(Y,\fs;\F)$ such that the following conditions hold
\begin{itemize}
        \item For any $r>0$ and any even $q<2q_{k}$, $E^{r,q}_{k}=0$.
        \item $\bigoplus_{q \text{ even }, q\geq 2q_{k}, r\geq 0}E_{k}^{r,q}$ is a free $Q$-module generated by $U^{q_{k}}\theta\in E^{0,2q_{k}}_{2k}$.
         \item $\bigoplus_{q \text{ odd }, r\geq k}E_{k}^{r,q}= \F[S]S^{k/2}\cdot V_k$.
\end{itemize}
When $k=2$, the claim is easily verified by setting $q_{2}=0$ and $V_{2}=HM^{*}_{\reduced}(Y,\fs;\F)$. Suppose we have proved the case $k=n$. The differential 
\begin{equation}
d_{k}: \bigoplus_{q \text{ even }, q\geq 2q_{k}, r\geq 0}E^{r,q}_{k}\to \bigoplus_{q \text{ even }, q\geq 2q_{k}-k+1, r\geq k}E_{k}^{r,q}    
\end{equation}
sends $U^{q_{k}}\theta\in E^{0,2q_{k}}_{k}$ to $S^{k/2}\cdot a_{k}\in E^{k,2q_{k}-k+1}_{k}$ for some $a_{k}\in V_{k}$. We let 
\[
n_{k}=\min\{q\geq 0\mid U^{q}a_{k}=0\in V_{k}\}
\]
Then one can verify that $q_{k+2}=q_{k}+n_{k}$ and $V_{k+2}=V_{k}/(a_{k})$ satisfies the requirement. This finishes the induction and the claim is proved.

Since $HM^{*}_{\reduced}(Y,\fs;\F)$ is a finite dimensional $\mathbb{F}$-vector space, then the spectral sequence must degenerate after finitely many pages. It follows from the claim that for some number $q_{\infty}$, \[\bigoplus_{q \text{ even},\ q\geq 2q_{\infty},\ r\geq 0}E_{\infty}^{r,q}\] is a free $Q$-module generated by $U^{q_{\infty}}\cdot\theta$, and 
\[ 
\bigoplus_{q \text{ even}, \ q < 2 q_\infty , \ r > 0} E_{\infty}^{r,q}= 0 .
\]

Therefore, 
\[
\widecheck{HM}^{-2h(Y,s)+2q_{\infty}+2\mathbb{N}}_{G}(Y,\fs;\mathbb{Z}):=\bigoplus_{q\in -2h(Y,s)+2q_{\infty}+2\mathbb{N}}\widecheck{HM}^{q}_{G}(Y,\fs;\mathbb{Z})
\]
is a free $Q$-module of rank $1$. By the $\mathbb{Z}$-version of (\ref{eq: exact triangle 2}), we see that image of the map 
\[
m_{\mathbb{Z}}:\widecheck{HM}^{*}_{G}(Y,\fs;\mathbb{Z})\to \widetilde{HM}^{*}_{G}(Y,\fs;\mathbb{Z})
\]
contains a $\mathbb{Z}[S]/(pS)$-module isomorphic to $\F[S]$, supported in degrees $-2h(Y,\s)+2 q_{\infty}+2\mathbb{N}$. Inverting $S$ and applying the change of coefficient map, we see that the map 
\[
S^{-1}m: S^{-1}\widecheck{HM}^{*}_{G}(Y,\fs;\mathbb{Z})\to S^{-1}\widetilde{HM}^{*}_{G}(Y,\fs;\mathbb{F})\cong \F[S,S^{-1},R]/(R^2)
\]
is surjective.
\end{proof}

\begin{corollary}\label{cor: kerU annilated by S}
For any element $a$ in the kernel of the map \[U: HM^{*}_{G,\redu}(Y,\fs;\F)\to HM^{*}_{G,\redu}(Y,\fs;\F)),\] there exists  $N$ such that $S^{N}a=0$.    
\end{corollary}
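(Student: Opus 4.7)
The plan is to diagram-chase $a$ through the exact triangle (\ref{eq: exact triangle 2}), using the surjectivity of $S^{-1}m$ from Lemma \ref{lem: Sm surjective} as the essential input. Because $HM^{*}_{G,\redu}(Y,\fs;\F)$ sits as a $Q_{2}$-submodule of $\widecheck{HM}^{*}_{G}(Y,\fs;\F)$, the hypothesis $U\cdot a=0$ in $HM^{*}_{G,\redu}$ is the same as $U\cdot a=0$ viewed in $\widecheck{HM}^{*}_{G}$, so I would carry out the chase in the larger module.

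First I would use exactness of (\ref{eq: exact triangle 2}) at $\widecheck{HM}^{*}_{G}(Y,\fs;\F)$, which reads $\ker(\cdot U)=\operatorname{Im}(l)$, to select a preimage $b\in \widetilde{HM}^{*}_{G}(Y,\fs;\F)$ with $l(b)=a$. Next I would invoke Lemma \ref{lem: Sm surjective}: since $S^{-1}m\colon S^{-1}\widecheck{HM}^{*}_{G}\to S^{-1}\widetilde{HM}^{*}_{G}$ is surjective, applying this to $b/1$ and then clearing the $S$-denominator produces an integer $N\geq 0$ and an element $c\in \widecheck{HM}^{*}_{G}(Y,\fs;\F)$ with $S^{N}b=m(c)$ in $\widetilde{HM}^{*}_{G}(Y,\fs;\F)$.

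Finally, I would apply $l$ to both sides and use the relation $l\circ m=0$, which is yet another instance of exactness of the triangle (\ref{eq: exact triangle 2}) (namely $\operatorname{Im}(m)\subseteq \ker(l)$), to conclude
\[
S^{N}a \;=\; S^{N}l(b) \;=\; l(S^{N}b) \;=\; l(m(c)) \;=\; 0
\]
in $\widecheck{HM}^{*}_{G}(Y,\fs;\F)$, and hence in the submodule $HM^{*}_{G,\redu}(Y,\fs;\F)$ as well. The argument is a short diagram chase and I do not anticipate a serious obstacle; the real content of the statement is already packaged in the $S$-localization statement of Lemma \ref{lem: Sm surjective}, where the $L$-space property of $Y/G$ for $p\gg 1$ is the crucial input.
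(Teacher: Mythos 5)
Your proof is correct, and it is essentially the paper's argument. The paper inverts $S$ in the exact triangle (\ref{eq: exact triangle 2}) and concludes that since $S^{-1}m$ is surjective (Lemma~\ref{lem: Sm surjective}), the map $S^{-1}l$ is zero, hence $a/1=0$ in $S^{-1}\widecheck{HM}^{\ast}_{G}$; you carry out the same chase in the un-localized modules and clear denominators at the end. Same inputs, same structure.
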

\begin{proof} We treat $HM^{*}_{G,\redu}(Y,\fs;\F)$ as a sub $Q_{2}$-module of $\widecheck{HM}^{\ast}_{G}(Y,\fs;\F)$. We invert $S$ in (\ref{eq: exact triangle 2}) and obtain the exact triangle 
\begin{equation*}
\xymatrix{&S^{-1}\widetilde{HM}^{\ast}_{G}(Y,\fs;\F) \ar[rd]^{S^{-1}l}&\\
S^{-1}\widecheck{HM}^{\ast}_{G}(Y,\fs;\F)\ar[ur]^{S^{-1}m}& & S^{-1}\widecheck{HM}^{\ast}_{G}(Y,\fs;\F)\ar[ll]^{\cdot U}}
\end{equation*}
By exactness, $a$ belongs to the image of $S^{-1}l$.
By Lemma \ref{lem: Sm surjective}, the map $S^{-1}l$ is trivial. So $a=0\in S^{-1}\widecheck{HM}^{\ast}_{G}(Y,\fs;\F)$.
\end{proof}


\begin{example}
It might be instructive to describe the spectral sequence from the proof of Lemma \ref{lem: Sm surjective} for the Brieskorn homology sphere $Y = \Sigma(2,3,7)$. This has $h(Y) = 0$ and $HM_{red}^\ast (Y) $ given by $\mathbb{Z}$ in degree $-1$ and zero otherwise. As bigraded $Q$-modules, we thus have 
\[
E_{2}^{\ast , even } = Q\langle \theta \rangle \quad , \quad E_{2}^{\ast , odd} = Q \langle x \rangle / (U x)
\]
where $\theta$ and $x$ lie in bigrading $(0,0)$, and $(0,-1)$, respectively. There are two possible cases that could occur:
\begin{enumerate}
\item[Case 1] $d_2 (\theta ) = 0$. 
Using the fact that the differentials $d_k$ are $Q$-module maps, it follows that $d_k = 0$ for all $k \geq 2$. We have $q_\infty = 0$.

\item[Case 2] $d_2 (\theta ) \neq 0$. In this case $E_{3}^{\ast , even} =  Q \langle \theta_1 , \theta_0  \rangle / (U \theta_0 - p \theta_1 , Sy )$ where $\theta_1 = [ U \theta ]$, $\theta_0 = [p\theta ]$, and $E_{3}^{\ast , odd} = Q \langle x \rangle/(Ux , Sx )$. For grading reasons, $d_k = 0$ for all $k \geq 3$. We have $q_\infty = 1$.
\end{enumerate}
\end{example}

\section{The family mixed map without family spin-c structure}

In this section, we define the Bauer--Furuta invariant for smooth families of 4-manifolds which do not necessarily carry family spin-c structures. Such generality is needed in the proof of Theorem \ref{thm: main}.

\subsection{The Seiberg--Witten map for 4-manifolds with boundary}
We start by recalling the definition of the Seiberg--Witten equations on a 4-manifold with boundary, following \cite{ManolescuStablehomotopytype,KhandhawitNewGaugeSlice}. This is an important step in the definition of relative Bauer-Furuta invariant.

Let $M$ be smooth, oriented, compact 4-manifold with single boundary component $Y$, which we assume is a rational homology 3-sphere. Let $\fs_{M}$ be a spin-c structure on $M$ that restricts to a spin-c structure $\fs_Y$ on $Y$. We fix a metric on $M$ which is cylindrical near the boundary. We use $S_{M} = S^{+}_{M}\oplus S^{-}_{M}$, $S_{Y}$ to denote the spinor bundles on $M$, $Y$, respectively. Given a spin-c connection $A$, we use $A^t$ to denote the induced connection on the determinant line bundle of the spinor bundle.

Since $Y$ is a rational homology 3-sphere, there exists a spin-c connection $A_0$ on $S_{Y}$ such that $A^t$ is flat. Such $A_0$ is unique up to gauge transformations. Fix an integer $k\geq 3$. The Coulomb slice $\coul(Y,A_0)$ is defined as the $L^{2}_{k}$-Sobolev completion of the space 
\[
\{(A_{0}+a,\psi)\mid a\in \Omega^{1}(Y,i\mathbb{R}),\ \psi\in \Gamma(S_{Y}),\  d^*a=0 \}.
\]

Next, we consider the following space of spin-c connections on $M$ 
\begin{equation}\label{eq: harmonic curvature}
\mathcal{A}:=\{A\mid d^{*}F_{A^{t}}=0,\  F_{\mathbf{t}(A)^{t}}=0\}.    
\end{equation}
Here $\textbf{t}(A)$ denotes the pull back of $A$ to  $Y$. We define the Picard torus $\Pic(M,\fs_{M})$ as the quotient of $\mathcal{A}$ by the gauge group $\mathcal{G}(M)$. There is a noncanonical homeomorphism 
\[
\Pic(M,\fs_{M})\cong T^{b_{1}(M)}
\]
defined by taking the holonomy along a collection of loops that represents a basis of $H_{1}(M;\mathbb{R})$. We have a family of Dirac operator
\[D^{+}: (\mathcal{A}\times S^{+}_{M})/\mathcal{G}(M)\to (\mathcal{A}\times S^{-}_{M})/\mathcal{G}(M)
\]
parameterized by $\Pic(M,\fs_{M})$.

Let $\fs'_M$ be another spin-c structure that is isomorphic to $\fs_{M}$. We pick any isomorphism $\rho: S_{M}\to S'_{M}$ between the spinor bundles and we define the homeomorphism 
\[
\Pic(M,\fs_M)\xrightarrow{\cong} \Pic(M,\fs'_M)
\]
by sending $[A]$ to $[\rho(A)]$. This homeomorphism is \emph{canonical} and is independent with $\rho$. We will use this homeomorphism to identify Picard tori for isomorphic spin-c structures.

Now we pick a smooth base connection $\hat{A}_0$ that lies in (\ref{eq: harmonic curvature}). The double coulomb slice $\coul^{cc}(M,\hat{A}_0)$ is defined as the  $L^{2}_{k+1/2}$-Sobolev completion of the space 
\[
\{(\hat{A}_{0}+a,\psi)\mid a\in \Omega^{1}(M;i\mathbb{R}),\ \psi\in \Gamma(S_{M}^+),\ d^*\alpha=0,\ d^*(\mathbf{t}(\alpha))=0\}.
\]
The slice $\coul^{cc}(M,\hat{A}_0)$ 
is not preserved by the full gauge group $\mathcal{G}_{M}$. Instead, it is preserved by the harmonic, based gauge group 
\[
\mathcal{G}^{h}(M):=\{g: M\to S^1\mid g \text{ is harmonic, and }g|_{Y}=1\}\cong H^{1}(M;\mathbb{Z}).
\]
Set the quotient Coulomb slice \[\mathcal{B}^+(M,\hat{A}_{0}):=\coul^{cc}(M,\hat{A}_0)/\mathcal{G}^{h}(M).\] 
We have a Hilbert bundle 
\[
\operatorname{pj}^{+}:\mathcal{B}^+(M,\hat{A}_0)\to \Pic(M,\fs_M)
\]
defined by 
\[
[(\hat{A}_0+a,\psi)]\mapsto [\hat{A}_0+p^{\perp}(a)].
\]
Here $p^{\perp}(a)$ denotes the orthogonal projection to the space 
\[
\mathcal{H}:=\{a\in \Omega^{1}(M;i\mathbb{R})\mid da=d^*a=0,\ \mathbf{t}(a)=0\}\cong H^{1}(M;\mathbb{R}).
\]
To
define the Seiberg--Witten map, we introduce another Hilbert bundle \[\operatorname{pj}^{-}: \mathcal{B}^{-}(M,\hat{A}_0)\to \Pic(M,\fs_M)\] 
as follows. Let $\mathcal{V}^-$ be the $L^{2}_{k-1/2}$ completion of the space 
\[
\{(\hat{A}_{0}+a,b,\psi)\mid a\in \mathcal{H},\ b\in \Omega^{2}_{+}(M;i\mathbb{R}),\ \psi\in \Gamma(S^-_{M})\}.
\]
We define $\mathcal{B}^{-}(M,\hat{A}_0)$ as the quotient $\mathcal{V}^-$ by the group $\mathcal{G}^h(M)$, which acts trivially on the $b$-component. And we define the map $\operatorname{pj}^{-}$ by 
\[
\operatorname{pj}^{-}([\hat{A}_{0}+a,b,\psi])=[\hat{A}_{0}+a].
\]
Now consider the Seiberg--Witten map $\overline{SW}:\coul^{cc}(M,A_M)\to \mathcal{V}^{-}$
defined by 
\[
\overline{SW}(\hat{A}_0+a,\psi)=(\hat{A}_0+p^{\perp}(a),F^{+}_{A^{t}}-\rho^{-1}(\psi\psi^*)_{0}, D_{\hat{A}_0+a}\psi).
\]
This map is equivariant under the action of $\mathcal{G}^h(M)$ hence induces the Seiberg--Witten map 
\[
SW: \mathcal{B}^{+}(M,\hat{A}_0)\to \mathcal{B}^{-}(M,\hat{A}_0),
\]
that covers the identity map of $\Pic(M,\fs_M)$. 

Let $A_0=\mathbf{t}(\hat{A}_0)$. Since all gauge transformations in $\mathcal{G}^{h}(M)$ are trivial on $Y$, we have a well-defined restriction map 
\[
r:\mathcal{B}^{+}(M,\hat{A}_0)\to \coul(Y,A_0)
\]
defined by 
\[
r([(\hat{A}_0+a,\psi)])=[(A_0+\mathbf{t}(a),\psi|_{Y})]. 
\]
By doing finite-dimensional approximations on the $S^1$-equivariant map of Hilbert bundles over $\mathrm{Pic}(M, \mathfrak{s}_M )$ given by
\[
(SW,r):\mathcal{B}^{+}(M,\hat{A}_0)\to \mathcal{B}^{-}(M,\hat{A}_0)\times \coul(Y,A_0),
\] one obtains the relative Bauer--Furuta invariant $\Bafu(M,\fs_{M})$. We will explain this construction in the family setting. For this purpose, we need to discuss the naturality of $(SW,r)$. Let $\fs'_{M}$ be another spin-c structure on $M$ such that   
\[
\fs'_{M}\cong \fs_{M} \text{ and }\fs'_{M}|_{Y}=\fs_{M}|_{Y}.
\]
Let $S'_{M}$ be the spinor bundle for $\fs'_{M}$. Then we can canonically identify $S'_{M}|_{Y}$ with $S_{M}|_{Y}$. Let $\hat{A}'_0$ be a spin-c connection on $S'_{M}$ such that $F_{(\hat{A}'_0)^{t}}$ is harmonic and $\hat{A}'_{0}|_{Y} = A_0$.
\begin{lemma}\label{lem: gluing spinor bundles}
 There exists an isomorphism 
$\tau: S_{M}\xrightarrow{\cong} S'_{M}$ that equals the identity on $S_{M}|_{Y}$ and satisfies $\tau^{*}(\hat{A}'_{0}) \in \hat{A}_{0}+\mathcal{H}$.    
Moreover, any two such $\tau$ differ by the composition with an element of $\mathcal{G}^{h}(M)$.
\end{lemma}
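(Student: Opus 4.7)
The plan is to proceed in two stages: first, produce an isomorphism $\tau_{1}\colon S_{M}\to S'_{M}$ with $\tau_{1}|_{Y}=\mathrm{id}$; second, compose $\tau_{1}$ with a gauge transformation trivial on $Y$ so that the pullback connection lies in $\hat{A}_{0}+\mathcal{H}$. For Stage 1, I would start from any isomorphism $\rho\colon S_{M}\to S'_{M}$ of spin-c structures (available because $\fs_{M}\cong \fs'_{M}$). Its restriction to $Y$, under the canonical identification $S'_{M}|_{Y}=S_{M}|_{Y}$, is multiplication by some $h\in C^{\infty}(Y,S^{1})$. Extending $h^{-1}$ to a smooth $u\colon M\to S^{1}$ and setting $\tau_{1}:=u\cdot\rho$ gives $\tau_{1}|_{Y}=\mathrm{id}$. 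The extendability amounts to vanishing of an obstruction in $\coker(H^{1}(M;\mathbb{Z})\to H^{1}(Y;\mathbb{Z}))$, which is forced by the hypotheses $\fs'_{M}\cong\fs_{M}$ and $\fs'_{M}|_{Y}=\fs_{M}|_{Y}$: the obstruction equals the relative first Chern class of $(\fs_{M},\fs'_{M})$, and it lives in the kernel of the map to the absolute difference, which vanishes by the isomorphism assumption.

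For Stage 2, write $\tau_{1}^{*}(\hat{A}'_{0})=\hat{A}_{0}+\alpha$ with $\alpha\in\Omega^{1}(M;i\mathbb{R})$. Since $\tau_{1}|_{Y}=\mathrm{id}$ intertwines $\hat{A}_{0}|_{Y}=A_{0}=\hat{A}'_{0}|_{Y}$, one has $\mathbf{t}(\alpha)=0$. The harmonicity of both determinant curvatures implies $F_{(\hat{A}'_{0})^{t}}-F_{\hat{A}_{0}^{t}}=2d\alpha$ is harmonic, so $d^{*}d\alpha=0$, and integration by parts
\[
\|d\alpha\|_{L^{2}(M)}^{2}=\langle\alpha,d^{*}d\alpha\rangle_{M}+\int_{Y}\mathbf{t}(\alpha)\wedge *d\alpha\big|_{Y}=0
\]
forces $d\alpha=0$. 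Hence $\alpha$ is a closed imaginary $1$-form with $\mathbf{t}(\alpha)=0$, and by the Hodge--Morrey--Friedrichs decomposition for manifolds with boundary I can uniquely write $\alpha=h_{\alpha}+df$ with $h_{\alpha}\in\mathcal{H}$ and $f\in C^{\infty}(M;i\mathbb{R})$ satisfying $f|_{Y}=0$. Setting $g:=e^{-f}\colon M\to S^{1}$ yields $g|_{Y}=1$ and $g^{-1}dg=-df$, so $\tau:=g\cdot\tau_{1}$ satisfies $\tau|_{Y}=\mathrm{id}$ and $\tau^{*}(\hat{A}'_{0})=\hat{A}_{0}+\alpha+g^{-1}dg=\hat{A}_{0}+h_{\alpha}\in\hat{A}_{0}+\mathcal{H}$.

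For uniqueness, given $\tau,\tau'$ both satisfying the conditions, their ratio $g:=\tau'\tau^{-1}\in C^{\infty}(M,S^{1})$ has $g|_{Y}=1$ from matching boundary restrictions, and $g^{-1}dg=(\tau')^{*}(\hat{A}'_{0})-\tau^{*}(\hat{A}'_{0})$ lies in $\mathcal{H}$ as a difference of two elements of $\mathcal{H}$. Coclosedness of $g^{-1}dg$ is precisely the harmonicity of $g$, placing $g$ in $\mathcal{G}^{h}(M)$. The main technical obstacle I anticipate is verifying the Hodge decomposition in the form needed with the relative boundary condition $\mathbf{t}=0$ (a standard consequence of Hodge--Morrey--Friedrichs theory but requiring care with boundary conditions), together with the cohomological extendability in Stage 1.
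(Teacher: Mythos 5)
Your two-stage structure (first arrange $\tau|_Y=\mathrm{id}$, then gauge away the non-harmonic part of the connection difference) matches the paper's proof, and your Stage~2 and uniqueness arguments are sound. In fact your integration-by-parts derivation of $d\alpha=0$ is a nice, self-contained alternative to what the paper does: the paper simply asserts $F_{(\hat A_0')^t}=F_{\hat A_0^t}$, which rests on the fact that both are harmonic $2$-forms pulling back to $0$ on $Y$ in the same absolute cohomology class, together with the injectivity of $H^2(M,Y;\mathbb{R})\to H^2(M;\mathbb{R})$ (a consequence of $Y$ being a rational homology sphere). Your Green's-formula computation reaches the same conclusion more explicitly. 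The Hodge--Morrey--Friedrichs step and the uniqueness argument via coclosedness of $g^{-1}dg$ are both correct and essentially identical to the paper's.

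The genuine gap is in Stage~1. You argue that the obstruction to extending $h^{-1}$ lies in $\operatorname{coker}\bigl(H^1(M;\mathbb{Z})\to H^1(Y;\mathbb{Z})\bigr)$, identify it with the relative first Chern class of $(\fs_M,\fs'_M)$, observe this class is in $\ker\bigl(H^2(M,Y;\mathbb{Z})\to H^2(M;\mathbb{Z})\bigr)$ because $c_1(\fs_M)=c_1(\fs'_M)$, and then claim it therefore vanishes. That last inference does not follow: lying in the kernel is exactly the same information as $\fs_M\cong\fs'_M$, and it does not force the relative class to be zero. What actually makes the obstruction vanish is the standing assumption (made at the start of the section) that $Y$ is a rational homology $3$-sphere, so $H^1(Y;\mathbb{Z})=0$ and the obstruction group is trivial. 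The paper uses the equivalent statement that $\mathcal{G}(Y)=C^\infty(Y,S^1)$ is connected. If one drops the assumption that $Y$ is a rational homology sphere, your argument as written would be false, since an isomorphism $\fs_M\cong\fs'_M$ need not be achievable rel boundary. Simply invoking $H^1(Y;\mathbb{Z})=0$ closes this gap.
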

\begin{proof}   
We start with an arbitrary isomorphism $\tau_0:S_{M} \xrightarrow{\cong}S'_{M}$. Since the gauge group $\mathcal{G}(Y)$ is connected, we can modify $\tau_0$ near $\partial M$ such that $\tau_0$ equals identity on $Y$. Let 
\[
a=\tau_0^*(\hat{A}'_{0})-\hat{A}_{0}\in i\Omega^{1}(M).
\]
Then we have $da=0$ since 
\[
F_{(\tau_0^*(\hat{A}'_{0}))^{t}}=F_{\hat{A}_{0}^{',t}}=F_{\hat{A}_{0}^{t}}.
\]
Consider the relative de-Rham complex 
\[
0\to \Omega^{0}(M,Y)\to \Omega^{1}(M,Y)\to \Omega^{2}(M,Y)\to \cdots,
\]
where $\Omega^{i}(M,Y)$ denotes the space of $i$-forms on $M$ that pulls back to $0$ on $Y$. The homology of this complex computes $H^{*}(M;Y;\mathbb{R})$ and any element is uniquely presented by a harmonic form in $\Omega^*(M,Y)$. Therefore, there exists a unique $h\in \mathcal{H}$ such that $h-a=id\theta$ for some $\theta\in \Omega^{0}(M,Y)$. We let $g=e^{i\theta}\in \mathcal{G}(M)$. Then $\tau=g\circ \tau_0$  satisfies the requirement. 

Given any two choices $\tau$ and $\tau'$, they differ by an element $g:M\to S^{1}$ that satisfies 
\[
g|_{Y}=1, \quad g(\hat{A}'_{0})-\hat{A}'_{0}\in \mathcal{H}.
\]
This is equivalent to $g\in \mathcal{G}^{h}(M)$.
\end{proof}

We choose an isomorphism $\tau: S_{M}\to S'_{M}$ from Lemma \ref{lem: gluing spinor bundles} and use it to define the homeomorphism 
\[
\tau^{\pm}_{\hat{A}_0,\hat{A}'_0}:\mathcal{B}^{\pm}(M,\hat{A}_0)\xrightarrow{\cong} \mathcal{B}^{\pm}(M,\hat{A}'_0)
\]
that covers the identity map of $\Pic(M,\fs_{M})$. Note that the choice of $\tau$ does not affect the map $\tau^{\pm}_{\hat{A}_0,\hat{A}'_0}$ because we already quotient out by the action of $\mathcal{G}^{h}(M)$. Furthermore, by the uniqueness of $\tau^{\pm}_{*,*}$, for any base connections $\hat{A}_0, \hat{A}'_0$ and $\hat{A}''_0$, we have 
\begin{equation}\label{eq: gluing compatible}
\tau^{\pm}_{\hat{A}_0,\hat{A}''_0}=\tau^{\pm}_{\hat{A}'_0,\hat{A}''_0}\circ \tau^{\pm}_{\hat{A}_0,\hat{A}'_0}.    
\end{equation}

By gauge equivalence of the Seiberg--Witten map, the following diagram commutes
\begin{equation}\label{diag: gluing SW}
\xymatrix{\mathcal{B}^{+}(M,\hat{A}_0)\ar[rr]^-{(SW,r)}\ar[d]^{\tau^{+}_{\hat{A}_0,\hat{A}'_0}} && \mathcal{B}^{-}(M,\hat{A}_0)\times \coul(Y,A_0)\ar[d]^{(\tau^{-}_{\hat{A}_0,\hat{A}'_0},\operatorname{id})} \\
\mathcal{B}^{+}(M,\hat{A}'_0)\ar[rr]^-{(SW,r)} && \mathcal{B}^{-}(M,\hat{A}'_0)\times \coul(Y,A_0).
}    
\end{equation}

\subsection{Relative Bauer--Furuta invariants of smooth families}
Now we define the relative Bauer--Furuta invariant of smooth families of 4-manifolds.

We focus on the case that is considered in our application. Let $Y$ be a Seifert rational homology 3-sphere with a spin-c structure structure $\fs_Y$. As before, we consider the Seifert $G$-action on $Y$. Further, we fix a lifting of this $G$-action to the spinor bundle $S_{Y}$. The Borel construction gives the smooth family 
\begin{equation}\label{eq: Borel Y} 
Y\to (Y\times EG)/G\to BG    
\end{equation}
with a family spin-c structure $\widetilde{\fs}$. 
Let $B$ be a finite CW complex with a continuous map $B\to BG$. By pulling back the family (\ref{eq: Borel Y}) and the family spin-c structure $\widetilde{\fs}$, one obtains a smooth family $Y\to \widetilde{Y}\to B$
and a family spin-c structure $\fs_{\widetilde{Y}}$ on it. 

Let $(M,\fs_{M})$ be a smooth spin-c 4-manifold bounded by $(Y,\fs)$, and we let 
\[M\to \widetilde{M}\to B\]
be a smooth family which restricts to the family $\widetilde{Y}\to B$. We do not assume that $\widetilde{M}\to B$ admits a family spin-c structure whose fiber restriction equals $\fs_{M}$. Instead, we just assume that the monodromy of $\widetilde{M}\to B$ preserves the isomorphism class of $\fs_{M}$. We use $M_{b}$ (resp. $Y_b$) to denote the fiber of $\widetilde{M}/B$ (resp. $\widetilde{Y}/B$) over $b\in B$. 

Associated to the family $\widetilde{M}/B$, we have the family Picard torus \[\Pic(\widetilde{M}/B,\fs_{M})=\bigcup_{b\in B}\Pic(M_{b},\fs_{M}).\] This is a (possibly nontrivial) principal $T^{b_{1}(M)}$-bundle over $B$. 

By choosing a $G$-invariant metric $g$ on $Y$, we obtain a fiberwise metric $g_{\widetilde{Y}}$ on $\widetilde{Y}/B$. Then we extend it to a fiberwise metric $g_{\widetilde{M}}$ on $\widetilde{M}/B$. Let $A_0$ be a spin-c connection on $Y$ with $F_{A^t_0}=0$. By taking the average in the affine space of spin-c connections 
, we may assume that $A_0$ is $G$-invariant. Then $A_0$ can be regarded as a spin-c connection on $Y_b$, denoted as $A_{0,b}$. We define the family Coulomb slice 
\[
\coul(\widetilde{Y}/B,A_0)=\bigcup_{b\in B}\coul(Y_{b},A_{0,b}),
\]
which is a Hilbert bundle over $B$.

Now we take a finite open cover $\{U_\alpha\}$ of $B$ such that each $U_\alpha$ is contractible. We use $\widetilde{M}_{\alpha}/U_{\alpha}$ to denote the restriction of $\widetilde{M}/B$. Then on each $\widetilde{M}_{\alpha}/U_\alpha$, there exists a family spin-c structure $\fs_{\widetilde{M}_{U_\alpha}}$ whose boundary restriction equals $\fs_{\widetilde{Y}}$ and  whose fiber restriction is isomorphic to $\fs_{M}$. 

\begin{lemma}\label{lem: harmonic family base connection}
For each $\alpha$, there exists a smooth fiberwise spin-c connections $\tilde{A}_{0,\alpha}=\{\hat{A}_{0,b,\alpha}\}_{b\in U_\alpha}$ for $\fs_{\widetilde{M}_{U_\alpha}}$ such that for any $b\in U_{\alpha}$, we have $\hat{A}_{0,b,\alpha}|_{\partial M_{b}}= A_{0,b}$ and $F_{\hat{A}^{t}_{0,b,\alpha}}$ is harmonic.
\end{lemma}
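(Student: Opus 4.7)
The plan is to first produce any smooth fiberwise spin-c connection on $\widetilde{M}_{U_\alpha}$ that restricts to $A_0$ on the boundary, and then modify it fiberwise by a smoothly varying exact $1$-form correction so that the resulting determinant curvature is harmonic.

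For the initial connection, I would exploit the fact that $U_\alpha$ is contractible so that the family $\widetilde{M}_{U_\alpha}\to U_\alpha$ may be trivialized and the space of smooth fiberwise spin-c connections for $\fs_{\widetilde{M}_{U_\alpha}}$ is a non-empty affine space modelled on $i\Omega^1(\widetilde{M}_{U_\alpha})$. To build in the boundary condition, I would use the cylindrical structure of $g_{\widetilde{M}}$ to identify a collar $\widetilde{Y}_{U_\alpha}\times [0,\e)\subset \widetilde{M}_{U_\alpha}$ and, on the collar, take the fiberwise connection to be the pullback of the $G$-invariant connection $A_0$ (viewed as a family spin-c connection on $\widetilde{Y}_{U_\alpha}$). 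Extend to the interior of each $M_b$ by partition of unity in $U_\alpha$, starting from any chosen smooth fiberwise connection for $\fs_{\widetilde{M}_{U_\alpha}}$. Denote the resulting connection by $\hat{A}'_{0,b}$.

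The second step is a fiberwise relative Hodge decomposition. For each $b\in U_\alpha$, the curvature $\omega_b := F_{(\hat{A}'_{0,b})^t}$ is a closed $i\br$-valued $2$-form on $M_b$ satisfying $\mathbf{t}(\omega_b)=0$, since $(\hat{A}'_{0,b})^t$ restricts to the flat connection $A_0^t$ on the boundary. Using the same relative Hodge decomposition invoked in the proof of Lemma \ref{lem: gluing spinor bundles}, I would write
\[
\omega_b \ = \ H_b + d\beta_b,
\]
where $H_b\in \Omega^2(M_b,\partial M_b;i\br)$ is the unique harmonic representative of $[\omega_b]\in H^2(M_b,\partial M_b;i\br)$ (satisfying $d^*H_b=0$ and $\mathbf{t}(H_b)=0$), and $\beta_b\in \Omega^1(M_b,\partial M_b;i\br)$ is obtained by applying the Green's operator of the relative Hodge Laplacian to $\omega_b-H_b$. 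Setting $\hat{A}_{0,b,\alpha} := \hat{A}'_{0,b} - \tfrac{1}{2}\beta_b$ (so that the determinant connection changes by $-\beta_b$) gives $F_{(\hat{A}_{0,b,\alpha})^t} = \omega_b - d\beta_b = H_b$, which is harmonic; and the condition $\mathbf{t}(\beta_b)=0$ preserves the boundary value $\hat{A}_{0,b,\alpha}|_{\partial M_b} = A_{0,b}$.

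The main point to verify is smoothness of the assignment $b\mapsto \hat{A}_{0,b,\alpha}$, equivalently smoothness of $\beta_b$ in $b$. Because $g_{\widetilde{M}}$ depends smoothly on $b$, the family of relative Hodge Laplacians on $1$-forms vanishing on $\partial M_b$ and the associated Green's operators should form a smooth family of bounded operators between the relevant Sobolev spaces; consequently so do $H_b$ and $\beta_b$, and hence $\hat{A}_{0,b,\alpha}$. I expect this parameter-dependence step to be the most delicate point, but it is a standard feature of parameter-dependent elliptic boundary value problems and does not present a conceptual obstacle.
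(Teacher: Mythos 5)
Your proof is correct and follows essentially the same route as the paper's: extend $\{A_{0,b}\}$ to a smooth fiberwise spin-c connection, apply a fiberwise Hodge decomposition to the determinant curvature to extract its harmonic part, correct the connection by a $1$-form potential of the exact part, and appeal to smooth parameter-dependence of the Green's operator. Your treatment is somewhat more explicit about the boundary conditions (insisting on the relative Hodge decomposition so that the correcting $1$-form has vanishing tangential part, which is exactly what preserves $\hat{A}_{0,b,\alpha}|_{\partial M_b} = A_{0,b}$) than the paper's terse statement, which writes $\gamma_b \in \operatorname{Im}(d^\ast)$ and $d^\ast\gamma_b = \beta_b/2$ where the intended condition is $d\gamma_b = \beta_b/2$ together with the appropriate boundary condition; this is a refinement of the same argument rather than a different one.
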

\begin{proof} We fist extend $\{A_{0,b}\}_{b\in U_{\alpha}}$ to a smooth fiberwise spin-c connection $\{\hat{A}'_{0,0,b}\}_{b\in U_{\alpha}}$. Then we have decomposition 
\[
F_{(\hat{A}'_{0,b,\alpha})^{t}}=\beta_{b}+h_{b}
\]
where $\beta_{b}\in \operatorname{Im}(d)$ and $h_{b}$ is harmonic. Let $\gamma_{b}\in \Omega^{1}(M_{b};i\mathbb{R})$ be the unique 1-form that satisfies 
\[
\gamma_{b}\in \operatorname{Im}(d^*:\Omega^{2}(M;i\mathbb{R})\to \Omega^{1}(M;i\mathbb{R}))\text{ and } d^{*}(\gamma_{b})=\frac{\beta_{b}}{2}.
\]
Then the fiberwise connection 
\[
\{\hat{A}_{(0,b,\alpha)}:=\hat{A}'_{(0,b,\alpha)}-\gamma_{b}\}_{b\in U_{\alpha}}
\]
satisfies the requirements.
\end{proof}
We pick $\tilde{A}_{0,\alpha}$ from Lemma \ref{lem: harmonic family base connection}. Repeating the construction in the last section, we obtain spaces
\[
\mathcal{B}^{\pm}(\widetilde{M}_{\alpha},\tilde{A}_{0,\alpha}):=\bigcup_{b\in U_{\alpha}}\mathcal{B}^{\pm}(M_{b},\hat{A}_{0,b,\alpha}),
\]
which are Hilbert bundles over $\Pic(\widetilde{M}/B,\fs_{M})|_{U_{\alpha}}$. 

For each $b\in U_{\alpha}\cap U_{\beta}$, we have the homeomorphism
\[
\tau^{\pm}_{\hat{A}_{0,b,\alpha},\hat{A}_{0,b,\beta}}: \mathcal{B}^{\pm}(M_{b},\hat{A}_{0,b,\alpha})\xrightarrow{\cong}\mathcal{B}^{\pm}(M_{b},\hat{A}_{0,b,\beta})
\]
from Lemma \ref{lem: gluing spinor bundles}. Since these homeomorphisms satisfy the compatibility condition  (\ref{eq: gluing compatible}), we can glue together $\{\mathcal{B}^{\pm}(\widetilde{M}_{\alpha},\tilde{A}_{0,\alpha})\}_{\alpha}$ and form a Hilbert bundle \[\mathcal{B}^{\pm}(\widetilde{M}/B,\fs_{M})\to \Pic(\widetilde{M}/B,\fs_{M}).\]
Moreover, by the diagram (\ref{diag: gluing SW}), we can glue together the Seiberg--Witten maps and the restriction maps to obtain the $S^1$-equivariant maps
\begin{equation}\label{eq: family SW}
\widetilde{SW}: \mathcal{B}^{+}(\widetilde{M}/B,\fs_{M})\to \mathcal{B}^{-}(\widetilde{M}/B,\fs_{M})    
\end{equation}
and 
\begin{equation}
\widetilde{r}:     \mathcal{B}^{+}(\widetilde{M}/B,\fs_{M})\to \coul(\widetilde{Y}/B,A_0).
\end{equation}

The next step is to do finite-dimensional approximations. This step follows closely with previous works \cite{ManolescuStablehomotopytype,Baraglia2024equivariant,KangParkTaniguchi} so we only give a sketch here. First, we use Kuiper's theorem to get a trivialization of the Hilbert bundle
\[
\mathcal{B}^{-}(\widetilde{M}/B,\fs_{M})\cong \Pic(\widetilde{M}/B,\fs_{M})\times \mathcal{U}^{-}.
\]
We compose (\ref{eq: family SW}) with the projection $\mathcal{U}^{-}$ and obtain 
\[
\widetilde{SW}'=\widetilde{l}+\widetilde{q}: \mathcal{B}^{+}(\widetilde{M}/B,\fs_{M})\to \mathcal{U}^{-}. 
\]
Here we decompose $\widetilde{SW}'$ into the linear part $\widetilde{l}$ and the nonlinear part $\widetilde{q}$.  Next, for $\lambda < \mu$, we let $V_{\lambda}^{\mu}\subset \coul(Y,A_0)$ be the sum  of eigenspacses of the elliptic operator $(D,d^*)$ with eigenvalue in $(\lambda,\mu]$. Then $V_{\lambda}^{\mu}$ is invariant under the $G$-action. Let 
\[\widetilde{V}_{\lambda}^{\mu}:=V^{\mu}_{\lambda}\times_{G}E,\]
where $E=B\times_{BG}EG$. We pick a finite-dimensional subspace $U^{-}\subset \mathcal{U}^{-}$. Set 
\[
U^{+}=\widetilde{l}^{-1}(U^{-})\cap (p^{\mu}_{-\infty}\circ \widetilde{r})^{-1}(0) 
\]
Here $p^{\mu}_{-\infty}: \coul(\widetilde{Y}/B,A_0)\to \widetilde{V}^{\mu}_{-\infty}$ denotes the fiberwise orthogonal projection. Consider the approximated Seiberg--Witten map 
\[
\widetilde{SW}_{\operatorname{apr}}=\widetilde{l}+\operatorname{pj}\circ \widetilde{q}: U^{+}\to U^{-},
\]
where $\operatorname{pj}$ denotes the orthogonal projection to $U^-$. Consider the map 
\begin{equation}\label{eq: approximated SW with restriction map}
(\widetilde{SW}_{\operatorname{apr}}, p^{\mu}_{\lambda}\circ \widetilde{r}): U^{+}\to U^{-}\times \widetilde{V}^{\mu}_{\lambda}.    
\end{equation}

We pick large enough $U^{-}$, $\lambda\ll 0 \ll \mu$ and $0\ll R$, $0<\epsilon\ll 1$. Then by the compactness property of the Seiberg--Witten equations, there exists a $G\times S^1$-equivariant Conley index pair $(K,L)$ of the approximated Chern-Simons-Dirac flow on $V^{\mu}_{\lambda}$ such that (\ref{eq: approximated SW with restriction map}) induces a well-defined $S^{1}$-equivariant map 
\begin{equation}\label{eq: unsuspended relative BF}
B(U^{+},R)/S(U^{+},R)\to B(U^-,\epsilon)/S(U^-,\epsilon)\wedge (K/L\wedge E_{+})/G.     
\end{equation}
Here $B(-,-)$ and $S(-,-)$ denote the disk/sphere (bundles) of a given radius. We consider the induced map of (\ref{eq: unsuspended relative BF}) on $c\tilde{H}^{*}_{S^1}(-)$ and $t\tilde{H}^{*}_{S^1}(-)$. Since $B(U^{+},R)/S(U^{+},R)$ is the Thom space of an oriented $S^1$-vector bundle over the Picard torus $\Pic(\widetilde{M}/B,\fs_{M})$, we have the isomorphisms 
\[
\begin{split}
c\tilde{H}^{*}_{S^{1}}(B(U^{+},R)/S(U^{+},R);\F)\cong c\tilde{H}^{*-\operatorname{dim}(U_{+})}_{S^{1}}(\Pic(\widetilde{M}/B,\fs_{M});\F)\\
t\tilde{H}^{*}_{S^{1}}(B(U^{+},R)/S(U^{+},R);\F)\cong t\tilde{H}^{*-\operatorname{dim}(U_{+})}_{S^{1}}(\Pic(\widetilde{M}/B,\fs_{M});\F)\end{split}.
\]
Moreover, since the $S^1$-action on $\Pic(\widetilde{M}/B,\fs_{M})$ is trivial. We have the isomorphisms 
\[
\begin{split}
c\tilde{H}^{*}_{S^{1}}(\Pic(\widetilde{M}/B,\fs_{M});\F)&\cong H^{*}(\Pic(\widetilde{M}/B,\fs_{M});\F)\otimes_{\F}\F[U,U^{-1}]/(U)\\ t\tilde{H}^{*}_{S^{1}}(\Pic(\widetilde{M}/B,\fs_{M});\F)&\cong H^{*}(\Pic(\widetilde{M}/B,\fs_{M});\F)\otimes_{\F}\F[U,U^{-1}].\end{split}
\]
Combining these isomorphisms with the induced map by (\ref{eq: unsuspended relative BF}), we obtain the maps 
\begin{equation}\label{eq: cobordism map 1}
\begin{split}
c\tilde{H}^{*-\operatorname{dim}(U_{-})}_{S^1}(K/L\wedge E_{+})/G;\F)&\to  H^{*-\operatorname{dim}(U_{+})}(\Pic(\widetilde{M}/B,\fs_{M});\F)\otimes_{\F}\F[U,U^{-1}]/(U)\\
t\tilde{H}^{*-\operatorname{dim}(U_{-})}_{S^1}(K/L\wedge E_{+})/G;\F)&\to  H^{*-\operatorname{dim}(U_{+})}(\Pic(\widetilde{M}/B,\fs_{M});\F)\otimes_{\F}\F[U,U^{-1}]
\end{split}
\end{equation}
The $G$-equivariant map $E\to EG$ induces an $S^{1}$-equivariant map 
\begin{equation}\label{eq: K/L wedge E to EG}
(K/L\wedge E_{+})/G\to (K/L\wedge EG_{+})/G= I^{\mu}_{\lambda}(Y,\fs_{Y},g)/\!\!/ G.    
\end{equation}
By composing (\ref{eq: cobordism map 1}) with the map induced by (\ref{eq: K/L wedge E to EG}), one obtains maps 
\begin{equation}\label{eq: cobordism map 2}
\begin{split}
\widehat{HM}^*_{G}(\widetilde{M}/B,\fs_{M}):\widehat{HM}^{*}_{G}(Y,\fs_{Y};\F)&\to  H^{*}(\Pic(\widetilde{M}/B,\fs_{M});\F)\otimes_{\F}\F[U,U^{-1}]/(U)\\
\overline{HM}^*_{G}(\widetilde{M}/B,\fs_{M}):\overline{HM}^{*}_{G}(Y,\fs_{Y};\F)&\to  H^{*}(\Pic(\widetilde{M}/B,\fs_{M});\F)\otimes_{\F}\F[U,U^{-1}]
\end{split}
\end{equation}
Now we discuss some key properties of these maps.
\begin{itemize}
    \item (\ref{eq: cobordism map 2}) are homogeneous of degree $\frac{c^{2}_{1}(\fs_{Y})-\sigma(M)}{4}-b^{+}(M)$.
    \item Let $Q_{1}=\F[S,R]/(R^2)$ and $Q_{2}=\F[S,R,U]/(R^2)$. Then we can make $H^{*}(\Pic(\widetilde{M}/B,\fs_{M});\F)$ into a $Q_{1}$-module via the map 
    \[
    \Pic(\widetilde{M}/B,\fs_{M})\to B\to BG.
    \]
    Since $\widetilde{SW}$ and $\widetilde{r}$ covers the identity map on $B$, (\ref{eq: cobordism map 1}) are maps between $H^{*}(B;\F)$-modules.
    As a result,  (\ref{eq: cobordism map 2}) are maps between $Q_{2}$-modules. 
    \item By the natural transformation $c\tilde{H}^{*}_{S^{1}}(-)\to t\tilde{H}^{*}_{S^{1}}(-)$, we have the commutative diagram 
\begin{equation}\label{diagram: HM-bar to HM-hat}
\xymatrix{\overline{HM}^{*}_{G}(Y,\fs_{Y};\F)\ar[d]^{k}\ar[rr]^-{\overline{HM}^*_{G}(\widetilde{M}/B,\fs_{M})} && H^{*}(\Pic(\widetilde{M}/B,\fs_{M});\F)\otimes_{\F}\F[U,U^{-1}]\ar[d]^{k'}
\\
\widehat{HM}^{*}_{G}(Y,\fs_{Y};\F)\ar[rr]^-{\widehat{HM}^*_{G}(\widetilde{M}/B,\fs_{M})} && H^{*}(\Pic(\widetilde{M}/B,\fs_{M});\F)\otimes_{\F}\F[U,U^{-1}]/(U)
}    
\end{equation}
Here $k$ is the map in the exact triangle (\ref{eq: exact triangle 1}) and $k'$ is the quotient map.
\item Since the $S^1$-actions on both the domain and target of (\ref{eq: unsuspended relative BF}) are semi-free, the map $\overline{HM}^*_{G}(\widetilde{M}/B,\fs_{M})$ is determined by the restriction of (\ref{eq: approximated SW with restriction map}) to the $S^{1}$-fixed sets, denoted by
\begin{equation}
(\widetilde{SW}_{\operatorname{apr}}^{S^1}, (p^{\mu}_{\lambda}\circ \widetilde{r})^{S^1}): (U^{+})^{S^{1}}\to (U^{-})^{S^{1}}\times (\widetilde{V}^{\mu}_{\lambda})^{S^1}.    
\end{equation}
Note that the map $\widetilde{SW}_{\operatorname{apr}}^{S^{1}}$ is a linear because the quadratic term $\widetilde{q}$ vanishes on reducible configurations. Now suppose $b^{+}(M)>0$. Then $\widetilde{SW}_{\operatorname{apr}}^{S^{1}}$ is not subjective. We denote its image by $V^{-}\subsetneq (U^{-})^{S^1}$. Then restriction of (\ref{eq: unsuspended relative BF}) to the $S^{1}$-fixed sets factors through the inclusion map 
\[
\begin{split}
B(V^{-},\epsilon)/S(V^{-},\epsilon) \wedge (K^{S^{1}}/L^{S^{1}}\wedge E_{+})/G&\longrightarrow \\ B((U^{-})^{S^1},\epsilon)/S((U^{-})^{S^1},\epsilon)&\wedge (K^{S^{1}}/L^{S^{1}}\wedge E_{+})/G.
\end{split}
\]
Since this map is null-homotopic, we see that $\overline{HM}^*_{G}(\widetilde{M}/B,\fs_{M})=0$ when $b^{+}(M)>0$. By (\ref{diagram: HM-bar to HM-hat}), the map $\widehat{HM}^*_{G}(\widetilde{M}/B,\fs_{M})$ induces a ``mixed map''
\[
\overrightarrow{HM}^*_{G}(\widetilde{M}/B,\fs_{M}):HM^{*}_{\reduced,G}(Y,\fs_{Y};\F)\to H^{*}(\Pic(\widetilde{M}/B,\fs_{M});\F)\otimes_{\F}\F[U,U^{-1}]/(U).
\]
This is a map between $Q_{2}$-modules. It fits into the following diagram
\begin{equation}\label{diag: HM-arrow natural 2}
\xymatrix{HM^*_{\reduced,G}(Y,\fs_{Y};\F)\ar[d]^{q^*}\ar[rrr]^-{\overrightarrow{HM}^*(\widetilde{M}/B,\fs_{M})}& & &H^*(\Pic(\widetilde{M}/B,\fs_{M});\F)\otimes_{\F}\F[U,U^{-1}]/(U)\ar[d]^{j^*\otimes \operatorname{Id}}\\
HM^*_{\reduced}(Y,\fs_{Y};\F)\ar[rrr]^-{\overrightarrow{HM}^*(M,\fs_{M})}& & &H^*(\Pic(M,\fs_{M});\F)\otimes_{\F}\F[U,U^{-1}]/(U)
} \end{equation}
Here $j^*$ is induced by the fiber inclusion \[j: \Pic(M,\fs_{M})\to \Pic(\widetilde{M}/B,\fs_{M}).\] And $q^*$ is defined in (\ref{eq: equivariant red to red}). And 
\begin{equation}\label{eq: mixed HM}
\overrightarrow{HM}^*(M,\fs_{M}):HM^*_{\reduced}(Y,\fs_{Y};\F)\to H^*(\Pic(M,\fs_{M});\F)\otimes_{\F}\F[U,U^{-1}]/(U)    
\end{equation}
is the mixed map for the single manifold $M$. 
\end{itemize}
\begin{remark} We expect that all these family cobordism maps are independent of auxiliary choices (e.g. the family metric on $\widetilde{M}/B$, the numbers $\mu,\lambda$ and the subspace $U^{-}$). However, such invariance is not needed in our proof. What we use is just the existence of a $Q_{2}$-module map that fits into (\ref{diag: HM-arrow natural 2}). Also note that the map $\overrightarrow{HM}^*(M,\fs_{M})$ is induced by the relative Bauer--Furuta invariant for the single spin-c 4-manifold $(M,\fs_{M})$. So the invariance of  $\overrightarrow{HM}^*(M,\fs_{M})$ follows from the invariance of $\operatorname{BF}(M,\fs_{M})$, which is proved in \cite{ManolescuStablehomotopytype}. We emphasize that the map $\overrightarrow{HM}^*(M,\fs_{M})$ above denotes the map defined via the relative Bauer-Furuta invariant, not the map defined in \cite{KronheimerMonopoles} via counting monopoles on $M\cup([0,\infty)\times Y)$.
In particular, we do not use the expectation (which is not proven yet) that the cobordism map between monopole Floer homologies in \cite{KronheimerMonopoles} coincides with the map induced from the relative Bauer--Furuta invariant.  
\end{remark}

\section{The contact invariant in $G$-equivariant monopole Floer cohomology}
Given a contact structure $\xi$ on a rational homology 3-sphere $Y$ with spin-c structure $\fs_{Y}$. Both Iida--Taniguchi \cite{IidaTaniguchi} and Roso \cite{Roso} defined a Floer homotopy version of contact invariant\footnote{There is a sign ambiguity in the definition of $\Psi(Y,\xi)$, which is unavoidable \cite{LRSMonopoleLefschetz}. This sign ambiguity will not affect our argument.}. For our purpose, we use Iida--Taniguchi's version, which is a \emph{nonequivariant} stable map 
\[
\Psi(Y,\xi): S^0\to \Sigma^{\frac{1}{2}+d_{3}(Y,[\xi])}\SWF(-Y,\fs_{Y}).
\]
Here $d_{3}(Y,[\xi])$ denotes the Gompf $d_3$-invariant for the 2-plane field $[\xi]$, defined as 
\[
d_{3}(Y,[\xi])=\frac{1}{4}(c^2_{1}(X)-2\chi(X)-3\sigma(X))
\]
for any almost complex bounding  $X$ of $(Y,\xi)$. By the duality between $\SWF(-Y,\fs_{Y})$ and $\SWF(Y,\fs_{Y})$, one can also write $\Psi(Y,\xi)$ as 
\begin{equation}\label{eq: homotopy contact invariant}
\Psi(Y,\xi)^*: \Sigma^{-\frac{1}{2}-d_{3}(Y,[\xi])}\SWF(Y,\fs_{Y})\to S^0
\end{equation}

To define this invariant, the authors did finite-dimensional approximation of the Seiberg--Witten equations on the symplectization $(\mathbb{R}^{\geq 1}\times Y, \frac{1}{2}d(t^2\theta))$, where $\theta$ is the contact form, and obtain the map  
\[
\Psi:S^{V_{+}}\wedge I^{\mu}_{\lambda}(Y,\fs_{Y},g)\to S^{V_{-}}    
\]
for suitable vector spaces $V_{\pm}$.

One advantage of the approach by a finite-dimensional approximation is that it does not require transversality. Thus, one can define an equivariant version of the invariant for a contact 3-manifold with a group action, which has been discussed by Roso~\cite{roso2023contactmoduloplspace} and Iida--Taniguchi~\cite{iida2024monopolestransverseknots}. We also need the equivariant version, summarized as follows.

Suppose $\xi$ is invariant under a $G=\mathbb{Z}/p$ action on $Y$. Then one can do finite-dimensional approximations $G$-equivariantly and make $\Psi$ into a $G$-equivariant map. (In this case, $V_{\pm}$ are suitable $G$-representation spaces.) Consider the induced map 
\begin{equation}\label{eq: contact induced map}
\begin{split}
H^{*}(BG;\F)&\cong \tilde{H}^{*+\operatorname{dim}(V_{-})}((S^{V_{-}}\wedge EG_{+})/G;\F)\\
&\xrightarrow{\Psi} \tilde{H}^{*+\operatorname{dim}(V_{-})}((S^{V_{+}}\wedge I^{\mu}_{\lambda}(Y,\fs_{Y},g) \wedge EG_{+})/G;\F) \\ &\cong \tilde{H}^{*+\operatorname{dim}(V_{-})-\operatorname{dim}(V_{+})}((I^{\mu}_{\lambda}(Y,\fs_{Y},g) \wedge EG_{+})/G;\F)\\
&\cong \widetilde{HM}^{*\frac{1}{2}+d_{3}(Y,[\xi])}_{G}(Y,\fs_{Y}).
\end{split}    
\end{equation}
We define the equivariant contact invariant 
\[
\widetilde{\psi}_{G}(Y,\xi)\in \widetilde{HM}^{*}_{G}(Y,\fs_{Y})
\]
as the image of $1\in H^*(BG;\F)$ under (\ref{eq: contact induced map}). We also define hat version 
\[
\widehat{\psi}_{G}(Y,\xi):= p(\widetilde{\psi}_{G}(Y,\xi))\in \widehat{HM}^*_{G}(Y,\fs_{Y};\F),
\]
 the reduced version 
\[
\psi_{G,\reduced}(Y,\xi):=k(\widehat{\psi}_{G}(Y,\xi))\in HM^*_{G,\reduced}(Y,\fs_{Y};\F),
\]
where $p$ and $k$ are maps in the exact triangle (\ref{eq: exact triangle 3}). Under the induced map by the fiber $q$ in (\ref{eq: SWF fibration}), these $G$-equivariant contact invariant pulls back to their nonequivariant counterparts. 
\[
\begin{split}
\widetilde{\psi}(Y,\xi)&:=q^*(\widetilde{\psi}_{G}(Y,\xi))\in \widetilde{HM}^*(Y,\fs_{Y};\F)\\
\psi(Y,\xi)&:=q^*(\psi_{G}(Y,\xi))\in \widehat{HM}^*(Y,\fs_{Y};\F) \\
\psi_{\reduced}(Y,\xi)&:=q^*(\psi_{G,\reduced}(Y,\xi))\in HM^*_{\reduced}(Y,\fs_{Y};\F)
\end{split}
\]
These nonequivariant versions have been defined and studied in \cite{IidaTaniguchi}.

Let $(M,\omega)$ be a symplectic filling of $(Y,\xi)$ with canonical spin-c structure $\fs_{M}$. We consider the nonequivariant relative Bauer-Furuta invariant as a stable map 
\begin{equation}\label{eq: Bauer-Furuta of symplectic filling}
\Bafu(M,\fs_{M}): \operatorname{Pic}(M,\fs_{M})_{+}\to \Sigma^{b^{+}(M)-\frac{c^2_{1}(\fs_{M})-\sigma(M)}{4}}\SWF(Y,\fs_{Y})
\end{equation}
Note that we have 
\[b^{+}(M)-\frac{c^2_{1}(\fs_{M})-\sigma(M)}{4}= -d_{3}(Y,\xi)-\frac{1}{2}+b_{1}(M).\]
Therefore, the composition of (\ref{eq: homotopy contact invariant}) and (\ref{eq: Bauer-Furuta of symplectic filling}) gives the stable map 
\begin{equation}\label{eq: composition of contact invariant with relative BF}
\Psi(Y,\xi)^{*}\circ \Bafu(M,\fs_{M}): \operatorname{Pic}(M,\fs_{M})_{+}\to S^{b_{1}(M)}.    
\end{equation}
\begin{lemma}\label{lem: mapping degree}
The mapping degree of (\ref{eq: composition of contact invariant with relative BF}) equals $\pm 1$.    
\end{lemma}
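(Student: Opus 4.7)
The plan is to identify the stable map $\Psi(Y,\xi)^{*}\circ \Bafu(M,\fs_{M})$ with a Bauer--Furuta-type invariant of the completed symplectic manifold $\widehat{M}:= M \cup_{Y}([1,\infty)\times Y)$, equipped with the symplectic form agreeing with $\omega$ on $M$ and with $\tfrac{1}{2}d(t^{2}\theta)$ on the cylindrical end, and then to invoke Taubes' non-vanishing theorem in the canonical spin-c structure. Note that $\Pic(\widehat{M},\fs_{M})$ is canonically identified with $\Pic(M,\fs_{M}) \cong T^{b_{1}(M)}$, so the target dimension $b_{1}(M)$ matches the dimension of the Picard torus, and the notion of mapping degree is well-defined.

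First I would set up a gluing statement at the level of finite-dimensional approximations. Recall that $\Bafu(M,\fs_{M})$ arises from a fiberwise finite-dimensional approximation of the $(SW,r)$ map on $M$, while $\Psi(Y,\xi)$ is obtained by finite-dimensional approximation of the Seiberg--Witten equations on the symplectization $([1,\infty)\times Y,\tfrac{1}{2}d(t^{2}\theta))$. After a standard pre-gluing and cut-off procedure, the composition $\Psi(Y,\xi)^{*}\circ \Bafu(M,\fs_{M})$ is stably homotopic to a finite-dimensional approximation of the Seiberg--Witten equations on $\widehat{M}$ in the spin-c structure extending $\fs_{M}$, parametrized by $\Pic(M,\fs_{M})$. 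This is essentially a functoriality property of the relative Bauer--Furuta invariant under gluing along $Y$, entirely analogous to the gluing underlying the constructions of \cite{IidaTaniguchi}.

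Second, I would apply Taubes' theorem: after perturbing the Seiberg--Witten equations on $\widehat{M}$ by $r\omega$ for $r\gg 0$, the moduli space of solutions in the canonical spin-c structure consists of a single irreducible, transversely cut-out, gauge-equivalence class, namely the canonical configuration induced by the almost complex structure compatible with $\omega$. Hence the mod $2$ count equals $1$, and the associated fiberwise finite-dimensional approximation defines a proper $S^{1}$-equivariant map of Thom spaces over $\Pic(M,\fs_{M})$ whose underlying map on the top cell, i.e.\ the induced map $\Pic(M,\fs_{M})_{+} \to S^{b_{1}(M)}$, has mapping degree $\pm 1$. Combined with the gluing identification from the previous step, this proves the lemma.

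The main obstacle is the gluing argument. Making it rigorous requires a priori estimates on Seiberg--Witten solutions which are uniform when one glues the approximation data on $M$ to that on the symplectization end, so that the compactness statements underpinning both $\Bafu(M,\fs_{M})$ and $\Psi(Y,\xi)$ produce a single compactness statement on $\widehat{M}$. These estimates are provided by Taubes' analysis of Seiberg--Witten equations on symplectic manifolds with contact-type ends and are already implicit in the works used above. The sign ambiguity in the mapping degree reflects the sign ambiguity inherent in the definition of $\Psi(Y,\xi)$, and cannot be removed, but it is immaterial for our applications.
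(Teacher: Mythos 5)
Your proposal follows essentially the same route as the paper: identify $\Psi(Y,\xi)^{*}\circ \Bafu(M,\fs_{M})$ (via a gluing/composition property of finite-dimensional approximations) with a Bauer--Furuta-type invariant of the completed manifold $\widehat{M}$, then invoke a non-vanishing theorem for the Seiberg--Witten count in the canonical spin-c structure. The paper makes this precise by routing through Iida's stable cohomotopy refinement $\Psi(M,\xi,\fs_{M})$ of Kronheimer--Mrowka's invariant $\mathfrak{m}(M,\xi,\fs_{M})$, citing \cite[Theorem 1.2]{IidaTaniguchi} for the gluing statement and Iida's result that the degree of $\Psi(M,\xi,\fs_{M})$ computes $\mathfrak{m}(M,\xi,\fs_{M})$, so the non-vanishing used is really Kronheimer--Mrowka's \cite{KMcontact} (a contact-boundary version of Taubes) rather than Taubes' theorem for closed manifolds; your attribution to "Taubes' non-vanishing theorem" should be to \cite{KMcontact}, though the analytic ideas of course go back to Taubes.

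The one substantive gap in your sketch is that it does not flag the $b_{1}(M)>0$ issues, which are the actual content of the lemma relative to \cite[Corollary 1.3]{IidaTaniguchi}. Iida's construction of $\Psi(M,\xi,\fs)$ and the gluing identity were established only for $b_{1}(M)=0$; extending them requires (i) carrying a family over the Picard torus $\operatorname{Pic}(M,\fs_{M})$ throughout, and (ii) a weighted-Sobolev analysis on the conical-end manifold $M^{*}$ showing that the linearized operator $D^{\alpha}=(d^{*,\alpha},d^{+})$ has kernel $H^{1}(M;\mathbb{R})$ and cokernel $H^{2}_{+}(M;\mathbb{R})$ for small weights $\alpha>0$ (this is the paper's Remark~\ref{rmk: weighted harmonic}, adapting \cite[Claim~3.6]{Iida}). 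Your phrase "the target dimension $b_{1}(M)$ matches the dimension of the Picard torus" is the right intuition, but the proof needs the index identification above to justify that the fiberwise BF-type map over $\operatorname{Pic}(M,\fs_{M})$ has target precisely $S^{b_{1}(M)}$; without it the dimension count and hence the degree statement are not established. Everything else in your plan (pre-gluing, compactness on the completed manifold, reduction to the transverse irreducible solution, sign ambiguity inherited from $\Psi(Y,\xi)$) is correct and matches the paper's outline.
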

\begin{proof}
When $b_{1}(M)=0$, this is proved as \cite[Corollary 1.3]{IidaTaniguchi} by Iida--Taniguchi. We now adapt the proof to the case that $b_{1}(M)>0$ as follows.

First, recall that Iida~\cite{ Iida} defined the stable cohomotopy version $\Psi(M,\xi,\fs)$ of Kronheimer--Mrowka's invariant $\mathfrak{m}(M,\xi,\fs)$ \cite{KMcontact} of a spin-c 4-manifold $(M,\fs)$ with contact boundary $(Y, \xi)$.
While $M$ was assumed to have vanishing $b_1$ in \cite{ Iida},
one can generalize Iida's construction to $b_1(M)>0$ by considering a family over the Picard torus $\operatorname{Pic}(M,\fs)$ as for the definition of the Bauer--Furuta invariant \cite{BauerFurutaI} for $b_1>0$. (See Remark \ref{rmk: weighted harmonic}.)
In this way, $\Psi(M,\xi,\fs)$ is formulated as a stable map
\[
\Psi(M,\xi,\fs) : \operatorname{Pic}(M,\fs)_+ \to S^{-d(M,\xi,\fs)},
\]
where $d(M,\xi,\fs)$ is the formal dimension of the moduli space in \cite{KMcontact}.
Iida \cite{Iida} also proved that the mapping degree of $\Psi(M,\xi,\fs)$ is given by the Kronheimer--Mrowka invariant $\mathfrak{m}(M,\xi,\fs)$.
This is also generalized to the case that $b_1(M)>0$ as in the proof of the fact that the Bauer-Furuta invariant recovers the Seiberg--Witten invariant for $b_1>0$ \cite{BauerFurutaI}.
On the other hand, for a symplectic filling $(M,\omega)$ of $(Y, \xi)$, the invariant $\mathfrak{m}(M,\xi,\fs)$ is $\pm1$ \cite{KMcontact}.
Thus the mapping degree of $\Psi(M,\xi,\fs_M)$ is $\pm1$ for the canonical spin-c structure $\fs_M$, regardless of $b_1(M)$.

The result \cite[Corollary 1.3]{IidaTaniguchi} for $b_1(M)=0$ is an immediate consequence of a gluing result \cite[Theorem 1.2]{IidaTaniguchi},
\[
\eta \circ (\Bafu(M,\fs_M) \wedge \Psi(Y,\xi))
= \Psi(M,\xi,\fs_M),
\]
together with the above result that the mapping degree of $\Psi(M,\xi,\fs_M)$ is $\pm1$.
This gluing is also generalized to $b_1(M)>0$ by following \cite{Manolescugluing,KhandhawitLinSasahira_gluing}.
Combining these slight generalizations together, one sees that the assertion of the lemma holds also for $b_1(M)>0$.
\end{proof}

\begin{remark}\label{rmk: weighted harmonic} Most of the proofs in \cite{Iida,IidaTaniguchi} can be easily adapted to the case $b_{1}(M)>0$, with one exception: the statement and proof of \cite[Claim 3.6]{Iida} needs some non-trivial adjustments when $b_{1}(M)>0$. Consider the non-compact manifold 
\[
M^{*}=M\cup_{\{0\}\times Y}([0,+\infty)\times Y)
\]
with the metric that equals the conical metric $dt\otimes dt+ t^2 dY$ on the end $[0,+\infty)\times Y$. We pick a smooth function $\rho: M^*\to [0,+\infty)$ such that $\rho|_{M}=0$ and $\rho(t,y)=t$ for any $t\geq 1$. For any $k\geq 3$ and any $\alpha\geq 0$, one defines the weighted Sobolev spaces $L^{2,\alpha}_{k}(M,T^*M)$ by completion with respect to the norm 
$\|a\|_{L^{2,\alpha}_{k}}:=\|e^{\alpha \rho}\cdot a\|_{L^{2}_{k}}$.
The spaces $L^{2,\alpha}_{k-1}(M,\mathbb{R})$ and $L^{2,\alpha}_{k-1}(M,\Lambda^{+}_{2}T^*M)$ are defined similarly. Consider the differential operator 
\[
D^{\alpha}:=(d^{*,\alpha},d^+):L^{2}_{k}(M,T^*M)\to L^{2,\alpha}_{k-1}(M,\mathbb{R})\oplus L^{2,\alpha}_{k-1}(M,\Lambda^{+}_{2}T^*M),
\]
where $d^{*,\alpha}=e^{-2\alpha \rho}\circ d^*\circ e^{2\alpha \rho}$. 
In our context, we need to prove the isomorphisms
\begin{equation}\label{eq: weighted harmonic}
\ker D^{\alpha}\cong H^{1}(M;\mathbb{R}),\quad \operatorname{coker}D^{\alpha}\cong H^{2}_{+}(M;\mathbb{R}) 
\end{equation}
for small $\alpha\geq 0$. When $\alpha=0$, this follows from \cite[Theorem 1A]{Hausel04}. Consider the commutative diagram 
\[
\xymatrix{
L^{2}_{k}(M,T^*M) \ar[d]_{\cong}^{e^{\alpha \rho}\cdot-} \ar[rr]^-{D^{\alpha}} & &L^{2,\alpha}_{k-1}(M,\mathbb{R})\oplus L^{2,\alpha}_{k-1}(M,\Lambda^{+}_{2}T^*M)  \ar[d]_{\cong}^{e^{\alpha \rho}\cdot-}\\
L^{2}_{k}(M,T^*M) \ar[rr]^-{D^0+\alpha\cdot Q} & & L^{2}_{k-1}(M,\mathbb{R})\oplus L^{2}_{k-1}(M,\Lambda^{+}_{2}T^*M)}
\]
Here $Q$ is the bounded operator defined by 
\[
Q(a)=(-a^{*}(\rho),(d\rho\wedge a)^{+}),
\]
where $a^*$ is the vector field dual to $a$. Thus, when $\alpha$ is small, $D^{\alpha}$ is a Fredholm operator with the same index as $D^0$. The cokernel of $D^0$ is spanned by $\mathcal{H}^{2}_{+}$, the space of square integrable self-dual harmonic forms on $M^{*}$. We have a well-defined bilinear map 
\[
L:\mathcal{H}^{+}\times (L^{2,\alpha}_{k-1}(M,\mathbb{R})\oplus L^{2,\alpha}_{k-1}(M,\Lambda^{+}_{2}T^*M))\to \mathbb{R},\quad (h,c,b)\mapsto \int_{M^*}h\wedge b.
\]
Note that $L$ is trivial when restricted to $\mathcal{H}^{+}\times\operatorname{image}(D^{\alpha})$. So $L$ decends to a bilinear map 
\[
L': \mathcal{H}^{+}\times\coker D^{\alpha}\to \mathbb{R}.
\]
On the other hand, for any $h\neq 0$, there exists $b$ such that $L(h,0,b)\neq 0$. So $L'|_{\{h\}\times \coker D^{\alpha}}$ is nontrivial. This implies 
\[
\operatorname{dim}(\operatorname{coker}D^{\alpha})\geq \operatorname{dim}(\mathcal{H}^{+})=\operatorname{dim}(\operatorname{coker}D^{0}).\] 
On the other hand, the quantity 
\[
\operatorname{dim}(\operatorname{coker}D^{\alpha})=\operatorname{dim}(\ker D^{\alpha})-\operatorname{index}(D^{0}) 
\]
is upper semi-continuous with respect to $\alpha$. So for $\alpha$ small enough, we have 
\[
\operatorname{dim}(\operatorname{coker}D^{\alpha})= \operatorname{dim}(\mathcal{H}^{+}).
\]
This implies the isomorphism (\ref{eq: weighted harmonic}) for small $\alpha>0$.
\end{remark}

Now we are ready to state and prove two key properties of the reduced contact invariant.

\begin{proposition}\label{pro: contact element S-nilpotent}
There exists $N\gg 0$ such that $S^{N}\psi_{G,\reduced}(Y,\xi)=0$.    
\end{proposition}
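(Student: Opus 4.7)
The plan is to reduce the statement to Corollary \ref{cor: kerU annilated by S}, which says that any $U$-torsion element of $HM^{*}_{G,\redu}(Y,\fs_{Y};\F)$ is $S$-nilpotent. Thus it suffices to show that $\psi_{G,\reduced}(Y,\xi)$ is killed by $U$.

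First, I would observe that by construction $\widehat{\psi}_{G}(Y,\xi) = p(\widetilde{\psi}_{G}(Y,\xi))$ lies in the image of the connecting map
\[
p : \widetilde{HM}^{\ast}_{G}(Y,\fs_{Y};\F) \to \widehat{HM}^{\ast}_{G}(Y,\fs_{Y};\F)
\]
appearing in the exact triangle (\ref{eq: exact triangle 3}). By exactness of that triangle, $\operatorname{image}(p)$ equals the kernel of multiplication by $U$ on $\widehat{HM}^{\ast}_{G}(Y,\fs_{Y};\F)$. Consequently
\[
U \cdot \widehat{\psi}_{G}(Y,\xi) = 0 \in \widehat{HM}^{\ast}_{G}(Y,\fs_{Y};\F).
\]

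Next, since $k : \widehat{HM}^{\ast}_{G}(Y,\fs_{Y};\F) \to \widecheck{HM}^{\ast}_{G}(Y,\fs_{Y};\F)$ is a homomorphism of $Q_{2}$-modules, applying $k$ yields
\[
U \cdot \psi_{G,\reduced}(Y,\xi) = U \cdot k(\widehat{\psi}_{G}(Y,\xi)) = k(U \cdot \widehat{\psi}_{G}(Y,\xi)) = 0
\]
inside $HM^{\ast}_{G,\reduced}(Y,\fs_{Y};\F)$. Thus $\psi_{G,\reduced}(Y,\xi)$ lies in the kernel of $U$.

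Finally, Corollary \ref{cor: kerU annilated by S} (whose proof rests on the $L$-space property of $Y/G$ for large $p$ via Lemma \ref{lem: S-localization} and Lemma \ref{lem: Sm surjective}) produces an integer $N$ with $S^{N}\psi_{G,\reduced}(Y,\xi)=0$, as required. No additional obstacle arises: the substance of the argument is packaged entirely into the two preparatory lemmas on equivariant monopole Floer homology of Seifert manifolds, and what remains here is the purely formal observation that a contact element coming from a $p$-image is always $U$-torsion.
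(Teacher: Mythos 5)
Your proposal is correct and follows essentially the same route as the paper: use exactness of triangle (\ref{eq: exact triangle 3}) to see that $\widehat{\psi}_{G}(Y,\xi)=p(\widetilde{\psi}_{G}(Y,\xi))$ is $U$-torsion, push this through the $Q_2$-linear map into $HM^{*}_{G,\reduced}$, and invoke Corollary \ref{cor: kerU annilated by S}. In fact your write-up is slightly cleaner than the paper's terse proof, which appears to have a notational slip (writing $\psi(Y,\xi)$ where the equivariant class $\widehat{\psi}_G(Y,\xi)$ is meant).
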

\begin{proof}
By the exact triangle (\ref{eq: exact triangle 3}), we have $U\cdot \psi(Y,\xi)=0$. Hence $U\psi_{G,\reduced}(Y,\xi)=0$. Then we apply Corollary \ref{cor: kerU annilated by S} to finish the proof.    
\end{proof}

\begin{proposition}
Assume $b^{+}(M)>0$. Then the image of $\psi_{\reduced}(Y,\xi)$ under the mixed map (\ref{eq: mixed HM}) satisfies 
\begin{equation}\label{eq: contact invariant nontrivial under mixed map}
\langle \overrightarrow{HM}^{*}(M,\fs_{M})(\psi_{\reduced}(Y,\xi)),[\operatorname{Pic}(M,\fs_{M})]\rangle=\pm 1\in \F[U,U^{-1}]/(U).    
\end{equation}
\end{proposition}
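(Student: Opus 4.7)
The plan is to reduce the pairing on the left-hand side of (\ref{eq: contact invariant nontrivial under mixed map}) to the mapping degree of the stable map composition (\ref{eq: composition of contact invariant with relative BF}), and invoke Lemma \ref{lem: mapping degree}.

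First, I interpret $\overrightarrow{HM}^*(M,\fs_M)$ stable-homotopically. Specializing the construction of \S 4 to the family over a point, the hat cobordism map $\widehat{HM}^*(M,\fs_M)$ is induced on reduced $S^1$-coBorel cohomology by a finite-dimensional approximation of $\Bafu(M,\fs_M)$ from (\ref{eq: Bauer-Furuta of symplectic filling}), followed by the Thom identification $c\tilde{H}^*_{S^1}(\Pic(M,\fs_M)_+) \cong H^*(\Pic(M,\fs_M);\F) \otimes_{\F} \F[U,U^{-1}]/(U)$. Since $b^+(M)>0$, the Tate version vanishes by the argument preceding (\ref{eq: mixed HM}), so $\widehat{HM}^*(M,\fs_M)$ factors through $HM^*_{\reduced}(Y,\fs_Y;\F) \subset \widecheck{HM}^*(Y,\fs_Y;\F)$ to give $\overrightarrow{HM}^*(M,\fs_M)$. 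Hence $\overrightarrow{HM}^*(M,\fs_M)(\psi_{\reduced}(Y,\xi)) = \widehat{HM}^*(M,\fs_M)(\psi(Y,\xi))$.

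Second, the class $\psi(Y,\xi) \in \widehat{HM}^*(Y,\fs_Y;\F)$ is by construction induced on reduced $S^1$-coBorel cohomology by a finite-dimensional approximation of the dual contact invariant $\Psi(Y,\xi)^*$ in (\ref{eq: homotopy contact invariant}). By functoriality of coBorel cohomology, the composite $\widehat{HM}^*(M,\fs_M)(\psi(Y,\xi))$ coincides with the cohomology class on $\Pic(M,\fs_M)_+$ induced by the stable composition $\Psi(Y,\xi)^* \circ \Bafu(M,\fs_M)$ of (\ref{eq: composition of contact invariant with relative BF}). Under the Thom identification, pairing with $[\Pic(M,\fs_M)]$ extracts the top $H^*$-degree component, tensored against the canonical generator of $\F[U,U^{-1}]/(U)$. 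For a stable map $f : \Pic(M,\fs_M)_+ \to S^{b_1(M)}$ with trivial $S^1$-action on both source and target, this extracted component equals $\deg(f)$ times that generator. By Lemma \ref{lem: mapping degree}, $\deg(\Psi(Y,\xi)^* \circ \Bafu(M,\fs_M)) = \pm 1$, yielding the claimed identity.

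The main obstacle is the careful bookkeeping of the grading shifts in the normalizations (\ref{eq: equivariant HM}) and (\ref{eq: from SWF to HM}), together with the sign ambiguity in $\Psi(Y,\xi)$, so as to verify that the output of the pairing lies in the appropriate $U$-weight and equals precisely $\pm 1 \in \F[U,U^{-1}]/(U)$ rather than $\pm U^{-n}$ for some nonzero $n$. Beyond this bookkeeping, the identity is essentially tautological once one interprets the mixed map and the reduced contact invariant as cohomology classes induced from the stable maps $\Bafu(M,\fs_M)$ and $\Psi(Y,\xi)^*$, respectively, and then invokes the degree computation of Lemma \ref{lem: mapping degree}.
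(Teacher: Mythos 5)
Your proposal is correct and takes essentially the same route as the paper: reduce to the mapping degree of $\Psi(Y,\xi)^{*}\circ \Bafu(M,\fs_{M})$ via functoriality and invoke Lemma \ref{lem: mapping degree}. The only structural difference is that the paper first carries out the degree computation at the level of the nonequivariant (tilde) cohomology and then transfers to the coBorel setting via the explicit commutative diagram involving the natural transformation $p:\widetilde{HM}^{*}\to \widehat{HM}^{*}$ and $\operatorname{Id}\otimes 1$, whereas you argue directly in reduced $S^{1}$-coBorel cohomology; this is equivalent, but the paper's two-step organization makes the identification $\psi(Y,\xi)=p(\widetilde{\psi}(Y,\xi))$ and its compatibility with the cobordism maps fully explicit rather than absorbed into a "by construction" assertion.
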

\begin{proof} Consider the induced maps
\[
\Psi(Y,\xi)^{*}: H^{*}(S^{b_{1}(M)};\F)\to \widetilde{HM}^*(M,\fs_{M})
\]
and
\[
\widetilde{HM}^*(M,\fs_{M}): \widetilde{HM}^{*}(Y,\fs_{Y};\F)\to H^{*}(\operatorname{Pic}(M,\fs_{M});\F). 
\]
By definition, $\Psi(Y,\xi)^{*}$ sends the generator of $H^{b_{1}(M)}(S^{b_{1}};\F)$ to $\widetilde{\psi}(Y,\xi)$. So by Lemma \ref{lem: mapping degree}, we have 
\[
\langle \widetilde{HM}^*(M,\fs_{M})(\widetilde{\psi}(Y,\xi)),[\Pic(M,\fs_{M})]\rangle=\pm 1.
\]
By the natural transformation form $\tilde{H}^{*}(-)$ to $c\tilde{H}^{*}_{S^1}(-;\F)$, we have the commutative diagram
\[
\xymatrix{\widetilde{HM}^*(Y,\fs_{Y};\F)\ar[d]^{p}\ar[rrr]^-{\widetilde{HM}^*(M,\fs_{M})}& & &H^*(\Pic(M,\fs_{M});\F)\ar[d]^{\operatorname{Id}\otimes 1}\\
\widehat{HM}^*(Y,\fs_{Y};\F)\ar[rrr]^-{\widehat{HM}^*(M,\fs_{M})}& & &H^*(\Pic(M,\fs_{M});\F)\otimes_{\F}\F[U,U^{-1}]/(U)}
\]
Hence 
\[
\begin{split}
\langle \overrightarrow{HM}^{*}(M,\fs_{M})(\psi_{\reduced}(Y,\xi)),[\operatorname{Pic}(M,\fs_{M})]\rangle&= \langle \widehat{HM}^{*}(M,\fs_{M})(\psi(Y,\xi)),[\operatorname{Pic}(M,\fs_{M})]\rangle\\&=\langle \widetilde{HM}^{*}(M,\fs_{M})(\widetilde{\psi}(Y,\xi)),[\operatorname{Pic}(M,\fs_{M})]\rangle\\
&=\pm 1.
\end{split}
\]
\end{proof}

\section{Proof of the main theorems} The purpose of this section is to prove Theorem \ref{theorem:singularities} and Theorem \ref{thm: main}. We will need to use the following key result: 

\begin{proposition}[\cite{KangParkTaniguchi}]\label{prop: homotopy coherent action} Let $\tau_{M}$ be the boundary Dehn twist on a smooth manifold $M$. Suppose $\tau^{m}_{M}=1\in\MCG(M)$ for some $m\neq0$. Let $p$ be a number that is coprime to $m$ and let $G=\mathbb{Z}/p$. Then there exists a homotopy coherent $G$-action on $M$ with trivial cohomological monodromy that extends the Seifert $G$-action on $Y$. Concretely, this means that there exists a smooth fiber bundle 
\begin{equation}\label{eq: Borel M family}
 M\hookrightarrow \widetilde{M}_{\infty}\to BG   
\end{equation}
that has the following properties:
\begin{enumerate}
    \item The monodromy action of $\pi_{1}(BG)$ on $H^{*}(M;\mathbb{Z})$ is trivial.
    \item The family restricts to the family 
    \begin{equation}\label{eq: Borel Y family}
    Y\hookrightarrow \widetilde{Y}_{\infty}\to BG,    
    \end{equation}
    where $\widetilde{Y}_{\infty}=(Y\times EG)/G$.
\end{enumerate}
\end{proposition}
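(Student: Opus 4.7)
The plan is to construct the fiber bundle $M \hookrightarrow \widetilde{M}_\infty \to BG$ by lifting the classifying map $BG \to B\diff(Y)$ of the Seifert Borel family through the boundary-restriction map $B\diff(M) \to B\diff(Y)$, using obstruction theory combined with the hypothesis $\tau_M^m = 1$ and the condition $\gcd(p,m) = 1$.

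First I would extend the Seifert $S^1$-rotation on $Y$ across a collar $(-1,0] \times Y \subset M$ by a cutoff function, producing a family $\phi \colon [0,1] \to \diff(M)$ of the form $\phi(\theta)(t,y) = (t, R_{\theta \alpha(t)} y)$ on the collar (and the identity elsewhere), where $R$ is the Seifert rotation and $\alpha$ is a bump function equal to $2\pi$ near $\partial M$ and $0$ in the interior. Then $\phi(0) = \id$, $\phi(\theta)|_Y$ is the Seifert rotation by $\theta$, and $\phi(1) = \tau_M$. Moreover, since all these diffeomorphisms act through the abelian $S^1$ on each slice of the collar, they commute exactly with one another and with $\tau_M$; in particular, setting $g := \phi(1/p)$ one has $g^p = \tau_M$ identically.

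Next I would restrict to $G = \mathbb{Z}/p \subset S^1$ and modify the candidate generator $g$ to kill the primary obstruction. Choose $a \in \mathbb{Z}$ with $ap \equiv 1 \pmod m$, which exists because $\gcd(p,m) = 1$, and set $\widetilde{g} := g \circ \tau_M^{-a} \in \diff(M)$. Using the exact commutation of $g$ and $\tau_M$, one has $\widetilde{g}^p = g^p \circ \tau_M^{-ap} = \tau_M^{1-ap}$. Since $m \mid (ap-1)$ and $\tau_M^m = 1$ in $\pi_0(\diff(M,\partial))$, the class $[\widetilde{g}^p] \in \pi_0(\diff(M,\partial))$ is trivial. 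This trivializes the primary obstruction in $H^2(BG; \pi_0(\diff(M,\partial)))$ and provides the desired bundle over the $2$-skeleton of $BG$, compatibly with the Seifert Borel family on $\partial M$.

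The hard part is handling the higher obstructions in $H^k(BG; \pi_{k-1}(\diff(M,\partial)))$ for $k \geq 3$, which are not controlled by the isotopy $\tau_M^m \simeq \id$ alone. The most efficient strategy, and the one followed by Kang--Park--Taniguchi, is to avoid a skeleton-by-skeleton argument entirely: the honest $S^1$-action on the collar provides a genuine $BS^1$-family of collar-supported diffeomorphisms, and this family, glued to the trivial $M$-family away from the collar via the correction $\tau_M^{-a}$ prescribed on the $2$-skeleton, yields a coherent bundle over $BG$ in one construction. Finally, the trivial cohomological monodromy is immediate: $\tau_M$ acts as the identity on $H^\ast(M;\mathbb{Z})$ as a collar-supported diffeomorphism near a rational homology sphere boundary, and $\phi(g)$ is isotopic to such a diffeomorphism.
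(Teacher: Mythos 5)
The paper itself does not prove this proposition; it is imported directly from \cite{KangParkTaniguchi} (the citation tag in the proposition header). So there is no in-text argument to compare against, and your sketch must be judged on its own.

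The first part of your proposal is sound and captures the essential algebraic point. Extending the Seifert rotation across a collar produces an honest one-parameter family $\phi$ with $\phi(1)=\tau_M$ and with $g:=\phi(1/p)$ satisfying $g^p=\tau_M$ on the nose, and the coprimality of $p$ and $m$ lets you pick $a$ with $ap\equiv 1\pmod m$ so that $\widetilde g := g\circ\tau_M^{-a}$ restricts to the Seifert generator on $Y$ and has $\widetilde g^{\,p}=\tau_M^{1-ap}$ isotopic rel $\partial$ to the identity. This correctly kills the primary obstruction in $H^2(BG;\pi_0(\mathrm{Diff}(M,\partial)))$, and the trivial-cohomological-monodromy claim is fine since $\widetilde g$ is isotopic in $\mathrm{Diff}(M)$ to the identity.

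The gap is in the last paragraph. Trivializing $[\widetilde g^{\,p}]\in\pi_0(\mathrm{Diff}(M,\partial))$ only builds the bundle (compatibly with $\widetilde Y_\infty$) over the $2$-skeleton of $BG$; the higher obstruction classes live in $H^k(BG;\pi_{k-2}(\mathrm{Diff}(M,\partial)))$ for $k\ge 3$, and these are nonzero in every even degree, so they do not vanish for general reasons. Your proposed shortcut -- ``the honest $S^1$-action on the collar provides a genuine $BS^1$-family \dots glued to the trivial $M$-family away from the collar via the correction $\tau_M^{-a}$ prescribed on the $2$-skeleton'' -- does not describe a well-defined construction. If you split $M$ as $M_0\cup C$ with $C$ the collar, the $G$-Borel family of $C$ restricts on $\{-1\}\times Y$ to the nontrivial $Y$-Borel family, while the ``trivial $M_0$-family away from the collar'' restricts there to the product $Y\times BG$; these do not match, and a single diffeomorphism $\tau_M^{-a}$ cannot serve as coherent transition data over all of $BG$. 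You need either a genuine skeleton-by-skeleton extension argument showing the relevant obstruction classes vanish, or a structural argument (for instance, one exploiting the honest $\mathbb{R}$-action $\phi$ and the null-isotopy of $\tau_M^m$ to produce an explicit simplicial or homotopy-coherent $G$-action) of the kind carried out in \cite{KangParkTaniguchi}. As written, the passage from ``primary obstruction vanishes'' to ``the bundle exists over all of $BG$'' is unjustified.
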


\begin{proof}[Proof of Theorem \ref{thm: main}]
We let $\fs_{M}$ be the canonical spin-c structure on $(M,\omega)$ and let $\fs_{Y}=\fs_{M}|_{Y}$. For the sake of contradiction, we now assume $\tau^m_{M}=0\in \MCG(M)$ for some $m\neq 0$. Let $p$ be a prime number that does not divide $m$. We also assume $p$ is large enough such that the Seifert $G=\mathbb{Z}/p$ action on $Y$ is free and that Lemma \ref{lem: S-localization} applies. We apply Proposition \ref{prop: homotopy coherent action} and obtain the family (\ref{eq: Borel M family}). 
Consider the contact element $\psi_{G,\reduced}(\xi)\in HM^*_{G,\reduced}(Y,\fs_{Y};\F)$. 
By Proposition \ref{pro: contact element S-nilpotent}, we have 
$S^{N}\cdot \psi_{G,\reduced}(Y,\xi)=0$ for some $N$. We take any even $n>2N$ and let $B$ be the $n$-skeleton of $BG$. By pulling back (\ref{eq: Borel M family}), we obtain the family   
\[
M\hookrightarrow \widetilde{M}\to B 
\]
Consider the cobordism induced map 
\[
\overrightarrow{HM}^{*}(\widetilde{M}/B,\fs_{M}): HM^{*}_{G,\reduced}(Y,\fs_{Y};\F)\to H^*(\Pic(\widetilde{M}/B,\fs_{M});\F)\otimes_{\F}\F[U,U^{-1}]/(U).
\]
Then 
\[
\overrightarrow{HM}^{*}(\widetilde{M}/B,\fs_{M})(\psi_{G,\reduced}(Y,\xi)))=\sum_{i\geq 0 }\psi_{i}\otimes U^{-i}
\]
for some $\psi_{i}\in H^*(\Pic(\widetilde{M}/B,\fs_{M});\F)$. By the commutative diagram (\ref{diag: HM-arrow natural 2}) and (\ref{eq: contact invariant nontrivial under mixed map}), we have 
\begin{equation}\label{eq: nontrivial pairing with Pic}
\langle j^{*}(\psi_{0}), [\Pic(M,\fs_{M})]\rangle
    =\langle \overrightarrow{HM}^{*}(\psi_{\reduced}(\xi)), [\Pic(M,\fs_{M})]\rangle
    =\pm 1.    
\end{equation}

Consider the $\F$-coefficient cohomological Serre spectral sequence for the fibration 
\[
\Pic(M,\fs_{M})\to \Pic(\widetilde{M}/B,\fs_{M})\to B.
\]
If there is a nontrivial differential \[d^{0,1}_k: \F^{b_{1}(M)}\cong H^{1}(\Pic(M,\fs_{M});\F)\to H^{2}(B;\F)\cong \F\]
for some $k\geq 2$, then by the Leibniz rule, the map 
\[
j^{*}: H^{b_{1}(M)}(\Pic(\widetilde{M}/B,\fs_{M});\F)\to H^{b_{1}(M)}(\Pic(M,\fs_{M});\F)
\]
must be trivial. This contradicts with (\ref{eq: nontrivial pairing with Pic}). So the Serre spectral sequence collapses on the $E_2$-page. By the Leray--Hirsch theorem, $\psi_{0}$ generates a free sub-$H^{*}(B;\F)$-module of $H^*(\Pic(\widetilde{M}/B,\fs_{M});\F)$. Since 
\[
H^{*}(B;\F)\cong \F[R,S]/(R^2,S^{\frac{n}{2}}),
\]
and since $N<\frac{n}{2}$, we have  \[S^{N}\cdot \psi_0\neq 0\in H^*(\Pic(\widetilde{M}/B,\fs_{M});\F).\]
This implies 
\[
S^{N}\cdot \overrightarrow{HM}^*(\widetilde{M}/B,\fs_{M})(\psi_{G,\reduced}(Y,\xi)))\neq 0\in H^*(\Pic(\widetilde{M}/B,\fs_{M});\F)\otimes_{\F}\F[U,U^{-1}]/(U)
\]
because its leading term $S^{N}\cdot \psi_{0}$ is nonzero.
This is a contradiction since $S^N\cdot \psi_{G,\reduced}(Y,\xi)=0$ and $\overrightarrow{HM}^*(\widetilde{M}/B,\fs_{M})$ is a map of $\F[S,R,U]/(R^2)$-modules. The proof of Theorem \ref{thm: main} is finished.
\end{proof} 

To prove Theorem \ref{theorem:singularities}, we need the following proposition.

\begin{lemma}\label{lem: infinite order when b1>0} Let $Y$ be an oriented Seifert manifold whose base orbifold $C$ has positive genus. Let $M$ be any compact, oriented, smooth 4-manifold bounded by $Y$. Suppose $b_{1}(M)=0$. Then the boundary Dehn twist $\tau_{M}$ has infinite order in $\pi_0(\diff(M,\partial))$. 
\end{lemma}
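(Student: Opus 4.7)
The plan is to detect $\tau_M$ via its action on $H_2(M,\partial M;\mathbb{Q})$. Since $\tau_M$ is supported in a collar of $\partial M$ and is the identity on $\partial M$, I expect it to act on this group as $\mathrm{id}+N$ for a nilpotent operator with $N^{2}=0$, whence $\tau_M^{k}=\mathrm{id}+kN$; the task then reduces to exhibiting a class on which $N$ is non-zero, which will be built from the Seifert $S^{1}$-saturation of a loop with non-trivial image in the base orbifold $C$.

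The hypothesis $b_{1}(M)=0$, together with the long exact sequence of $(M,\partial M)$ in $\mathbb{Q}$-coefficients and the vanishing $H_{3}(M,\partial M;\mathbb{Q})\cong H^{1}(M;\mathbb{Q})=0$ (by Lefschetz duality), gives the short exact sequence
\begin{equation*}
0\to V\to H_{2}(M,\partial M;\mathbb{Q})\to H_{1}(Y;\mathbb{Q})\to 0
\end{equation*}
with $V:=\operatorname{Im}(H_{2}(M;\mathbb{Q})\to H_{2}(M,\partial M;\mathbb{Q}))$, and shows moreover that the natural map $H_{2}(Y;\mathbb{Q})\hookrightarrow H_{2}(M;\mathbb{Q})$ is injective. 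Every class in $H_{2}(M;\mathbb{Q})$ admits a representative cycle in the complement of the collar where $\tau_{M}=\mathrm{id}$, so $\tau_{M}$ acts trivially on $H_{2}(M;\mathbb{Q})$ and hence on $V$. Combined with the triviality of the induced action on the quotient $H_{1}(Y;\mathbb{Q})$ (since $\tau_{M}|_{\partial M}=\mathrm{id}$), one obtains $\tau_{M}=\mathrm{id}+N$ on $H_{2}(M,\partial M;\mathbb{Q})$ with $\operatorname{Im}(N)\subset V$ and $N|_{V}=0$, so $N^{2}=0$.

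To show $N\neq 0$, I would choose a loop $\gamma\subset Y$ (transverse to the exceptional fibers) whose image $\bar\gamma\subset C$ represents a non-zero class in $H_{1}(C;\mathbb{Q})$, available since $C$ has positive genus. Take a relative 2-cycle $\Sigma$ with $\partial\Sigma=\gamma$ arranged so that $\Sigma\cap([t_{0},1]\times Y)=[t_{0},1]\times\gamma$. The chain homotopy $(u,t,s)\mapsto(t,e^{iu\beta(t)}\cdot c(s))$, with $c$ parametrising $\gamma$, identifies $\tau_{M}(\Sigma)-\Sigma$ in $H_{2}(M;\mathbb{Q})$ with the Seifert $S^{1}$-saturation $[S^{1}\cdot\gamma]\in H_{2}(Y;\mathbb{Q})\subset H_{2}(M;\mathbb{Q})$. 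Non-triviality of $[S^{1}\cdot\gamma]\in H_{2}(Y;\mathbb{Q})$ follows from an intersection calculation: choosing $\delta\subset C$ with $\bar\gamma\cdot\delta=1$ and any lift $\tilde\delta\subset Y$, the torus $S^{1}\cdot\gamma$ meets $\tilde\delta$ transversely in $\pm 1$ points, since the torus contains the entire fiber over each intersection of $\bar\gamma$ and $\delta$ in $C$ while $\tilde\delta$ crosses each fiber once; if the Seifert action has isotropy I would first pass to a finite free cover and invoke naturality. The injectivity $H_{2}(Y;\mathbb{Q})\hookrightarrow H_{2}(M;\mathbb{Q})$ then yields $N(\Sigma)\neq 0$ in $H_{2}(M,\partial M;\mathbb{Q})$, finishing the proof.

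The principal technical point is the chain-level identification of $\tau_{M}(\Sigma)-\Sigma$ with the saturation torus; once this is granted, everything else is a direct consequence of the long exact sequence and a standard intersection-pairing computation on $Y$.
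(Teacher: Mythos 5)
Your argument has a fatal gap at the very last step, and in fact the whole strategy of detecting $\tau_M$ on $H_2(M,\partial M;\mathbb{Q})$ cannot work. The chain-level identification is correct: $\tau_M(\Sigma)-\Sigma$ is an absolute $2$-cycle supported in the collar, homologous in $H_2(M;\mathbb{Q})$ to the saturation torus $[S^1\cdot\gamma]$, which lies in the image of $H_2(Y;\mathbb{Q})\to H_2(M;\mathbb{Q})$. But precisely for this reason it is killed by the map $H_2(M;\mathbb{Q})\to H_2(M,\partial M;\mathbb{Q})$: by the long exact sequence of the pair, $\mathrm{Im}(H_2(Y)\to H_2(M))=\ker(H_2(M)\to H_2(M,\partial M))$. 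So $N([\Sigma])=0$ in $H_2(M,\partial M;\mathbb{Q})$, contrary to what you assert, and the injectivity of $H_2(Y;\mathbb{Q})\hookrightarrow H_2(M;\mathbb{Q})$ is irrelevant to the conclusion you draw from it. More conceptually, the path $\Phi_u(t,y)=(t,e^{iu\beta(t)}y)$, extended by the identity off the collar, is an isotopy from $\mathrm{id}$ to $\tau_M$ through diffeomorphisms of $M$ preserving $\partial M$ as a set (just not pointwise), so $\tau_M$ acts trivially on $H_*(M,\partial M)$ and on all absolute (co)homology; the relative mapping class rel $\partial$ can never be detected there.

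The paper's proof sidesteps this by working with $H_2(M,\gamma_2;\mathbb{Q})$, where $\gamma_2\subset Y$ is a $1$-dimensional subset (parallel copies of a lift of a loop in the base). Because $H_2(\gamma_2)=0$, the map $H_2(M;\mathbb{Q})\to H_2(M,\gamma_2;\mathbb{Q})$ is injective, so the saturation torus $F$ survives: $[F]\neq 0$ in $H_2(M,\gamma_2;\mathbb{Q})$. Meanwhile the isotopy $\Phi_u$ does not preserve $\gamma_2$, so there is no a priori triviality, and an explicit computation gives $\tau_{M,*}[S]=[S]+n[F]$, whence $\tau_{M,*}$ has infinite order on $H_2(M,\gamma_2;\mathbb{Q})$. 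Your argument can be salvaged by making the same replacement: take the relative group with respect to $\gamma$ (or the lift $\gamma_2$) rather than all of $\partial M$; then your chain homotopy and intersection computation do carry through.
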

\begin{proof}
We pick a simple closed loop $\gamma_0$ in $C$ that misses the orbifold points and represents a non-trivial element in $H_{1}(|C|;\mathbb{Q})$. Let $\gamma_{1}$ be a simply closed curve in $Y$ that lifts $\gamma_0$. Since $b_{1}(M)=0$, we have 
\[n[\gamma_{1}]=0\in H_{1}(M;\mathbb{Z})\] for some $n\neq 0$. Thus, letting $\gamma_{2}\hookrightarrow Y$ be $n$ parallel copies of $\gamma_{1}$, there exists a smoothly immersed surface $S\hookrightarrow M$ with $\partial S=\gamma_2$. That implies $[S]$ is nonzero in $H_{2}(M,\gamma_{2};\mathbb{Q})$. Let $F\subset Y$ be the union of all fibers over $\gamma_{1}$, which is an embedded surface representing a non-zero element in $H_{1}(Y;\mathbb{Q})$. Since the maps $H_{2}(Y;\mathbb{Q})\to H_{2}(M;\mathbb{Q})$ and 
$H_{2}(M;\mathbb{Q})\to H_{2}(M;\gamma_{2};\mathbb{Q})$
are both injective, we have 
\[
[F]\neq 0\in H_{2}(M;\gamma_{2};\mathbb{Q}).
\]
By the explicit description of the boundary Dehn twist $\tau_{M}$, it is straightforward to verify that 
\[
\tau_{M,*}([S])=[S]+n[F]\in H_{2}(M;\gamma_{2};\mathbb{Q}).
\]
Therefore, the induced map $\tau_{M,*}: H_{2}(M;\gamma_{2};\mathbb{Q})\to H_{2}(M;\gamma_{2};\mathbb{Q})$ has infinite order. Hence $\tau_{M}$ has infinite order in $\pi_0(\diff(M,\partial))$ as well.
\end{proof}
\begin{proof}[Proof of Theorem \ref{theorem:singularities}] When $b_{1}(Y)=0$, the result follows from Theorem \ref{thm: main}. So we assume $b_{1}(Y)>0$. Since $Y$ is the link of an isolated singularity, the degree of $Y$ (as an orbifold $S^1$-bundle) is negative \cite{neumann-raymond}[\S 5]. Hence the base orbifold $C$ of $Y$ has positive genus. In addition, $b_1(M) = 0$ \cite{milnor}. Thus, the conclusion follows from Lemma \ref{lem: infinite order when b1>0}.
\end{proof}

The proof of the Theorem \ref{thm:singularities2} is identical to that of Theorem \ref{theorem:singularities}, with the additional note that $b_1(M) = 0$ for Milnor fibers of smoothings of isolated normal surface singularities by a result of Greuel--Steenbrink \cite{greuel-steenbrink}.



\bibliographystyle{alpha}
\bibliography{main.bib}

\end{document}